\theoremstyle{plain}
\newtheorem{theorem}{Theorem}[section]
\newtheorem{lemma}[theorem]{Lemma}
\newtheorem{proposition}[theorem]{Proposition}
\theoremstyle{definition}
\newtheorem{remark}[theorem]{Remark}
\renewcommand{\tilde}{\widetilde}
\renewcommand{\bar}{\overline}
\numberwithin{equation}{section}
\newcommand{\tr}{\operatorname{tr}}
\newcommand{\cA}{\mathcal{A}}
\newcommand{\cB}{\mathcal{B}}
\newcommand{\cC}{\mathcal{C}}
\newcommand{\cE}{\mathcal{E}}
\newcommand{\cF}{\mathcal{F}}
\newcommand{\cL}{\mathcal{L}}
\newcommand{\cM}{\mathcal{M}}
\newcommand{\cP}{\mathcal{P}}
\newcommand{\cR}{\mathcal{R}}
\newcommand{\cS}{\mathcal{S}}
\newcommand{\cT}{\mathcal{T}}
\newcommand{\cV}{\mathcal{V}}
\newcommand{\cU}{\mathcal{U}}
\newcommand{\cSM}{\mathcal{SM}}
\newcommand{\bB}{\mathbb{B}}
\newcommand{\bC}{\mathbb{C}}
\newcommand{\bD}{\mathbb{D}}
\newcommand{\bE}{\mathbb{E}}
\newcommand{\bF}{\mathbb{F}}
\newcommand{\bG}{\mathbb{G}}
\newcommand{\bM}{\mathbb{M}}
\newcommand{\bR}{\mathbb{R}}
\newcommand{\bS}{\mathbb{S}}
\newcommand{\bX}{\mathbb{X}}
\newcommand{\fF}{\mathfrak{F}}
\newcommand{\fG}{\mathfrak{G}}
\newcommand{\sA}{\mathsf{A}}
\newcommand{\sB}{\mathsf{B}}
\newcommand{\sE}{\mathsf{E}}
\newcommand{\sF}{\mathsf{F}}
\newcommand{\sM}{\mathsf{M}}
\newcommand{\sT}{\mathsf{T}}
\newcommand{\sL}{\mathsf{L}}
\newcommand{\sN}{\mathsf{N}}
\newcommand{\sH}{\mathsf{H}}
\newcommand{\sP}{\mathsf{P}}
\newcommand{\sx}{\mathsf{x}}
\newcommand{\sy}{\mathsf{y}}
\newcommand{\pa}{\partial}
\newcommand{\wK}{\widehat{K}}
\DeclareMathOperator*{\dist}{dist}
\newcommand{\zbE}{\prescript{}{0}{\mathbb{E}}}
\newcommand{\zbF}{\prescript{}{0}{\mathbb{F}}}
\newcommand{\zW}{\prescript{}{0}{W}}
\begin{document}
\title[thermalmagetoviscoelasti]{On a thermodynamically consistent model for magnetoviscoelastic fluids in 3D}

\author{Hengrong Du}
\address{Department of Mathematics\\
        Vanderbilt University\\
        Nashville, Tennessee\\
        USA}
\email{hengrong.du@vanderbilt.edu}

\author{Yuanzhen Shao}
\address{The University of Alabama\\ 
	Tuscaloosa, Alabama \\
	USA}
\email{yshao8@ua.edu}
\author{Gieri Simonett}
\address{Department of Mathematics\\
        Vanderbilt University\\
        Nashville, Tennessee\\
        USA}
\email{gieri.simonett@vanderbilt.edu}

\thanks{This work was supported by a grant from the Simons Foundation (\#426729 and \#853237, Gieri Simonett).}

\subjclass[2020]{Primary: 35Q35, 35Q74, 35K59, 35B40. Secondary: 76D03, 76A10.}
% 35A01  Existence problems for PDEs: global existence, local existence, non-existence
% 35Q30 Navier-Stokes equations
% 35Q35 PDEs in connection with fluid mechanics 
% 35B65 Smoothness and regularity of solutions to PDEs
% 35B  Qualitative properties of solutions to partial differential equations
% 35B40 Asymptotic behavior of solutions to PDEs 
% 35B65  Smoothness and regularity of solutions to PDEs
% 35D35 Strong solutions to PDEs 
% 35K59  Quasilinear parabolic equations
% 35Q  Partial differential equations of mathematical physics and other areas of application 
% 35Q60   	PDEs in connection with optics and electromagnetic theory
% 35Q74 PDEs in connection with mechanics of deformable solids

% 76A10 Viscoelastic fluids 
% 76D03  Existence, uniqueness, and regularity theory for incompressible viscous fluids
% 76D05 Navier-Stokes equations for incompressible viscous fluids
% 74F15  Electromagnetic effects in solid mechanics 
% 76W05  Magnetohydrodynamics and electrohydrodynamics 

\keywords{  Magnetoviscoelstic fluid, quasilinear parabolic system, entropy, thermodynamic consistency,  Lyapunov function, convergence to equilibria.}

\begin{abstract} 
	We introduce a system of equations
	that models a non-isothermal  magnetoviscoelastic fluid. We show that the model is thermodynamically consistent, 
	and that the critical points of the entropy functional with prescribed energy correspond exactly with the equilibria of the system.
	The system is investigated in the framework of quasilinear parabolic systems and shown to be
	 locally well-posed in an $L_p$-setting. Furthermore, we prove that 
	constant equilibria are normally stable. In particular, we show that solutions
	that start close to a constant equilibrium exist globally and converge exponentially fast to a (possibly different)
	constant equilibrium. Finally, we establish that the negative entropy serves as a strict  Lyapunov functional 
	and we then show that every solution that is eventually bounded in the topology of the natural state
       space exists globally and converges to the set of equilibria. 
\end{abstract}

\maketitle

%\tableofcontents

\section{Introduction}\label{S:Intro}
We study the following system of equations that models the evolution of a
magnetoviscoelastic fluid, allowing for a non-constant temperature in a $C^3$-bounded domain $\Omega\subset\bR^3$ with outward unit normal~$\nu$:
\begin{equation}
\label{magneto sys}
\left\{\begin{aligned}
\partial_t u + u \cdot \nabla u -\nabla\cdot(\upmu(\theta) \nabla u)  +\nabla \pi &=-\nabla \cdot(\nabla m \odot \nabla m) + \nabla \cdot (F F^{\sT}) &&\text{in}&&\Omega ,\\
\nabla \cdot u &=0 &&\text{in}&&\Omega ,\\
u&=0 &&\text{on}&&\partial \Omega  , \\
\partial_t F + u \cdot\nabla F - \nabla\cdot(\kappa(\theta) \nabla F) &= (\nabla u)^{\sT} F   &&\text{in}&&\Omega,\\
%\nabla \cdot F  &=0 &&\text{in}&&\Omega ,\\
F &=0   &&\text{on}&&\partial\Omega, \\
\partial_t\theta+u\cdot\nabla\theta +  \nabla\cdot q &= \upmu(\theta)|\nabla u|^2+\kappa(\theta)|\nabla F|^2 
+\alpha(\theta)|\Delta m+|\nabla m|^2m|^2&& \text{in}&&\Omega,
\\
q\cdot\nu&=0&& \text{on}&&\partial \Omega,
\\
\partial_t m + u \cdot\nabla m &= - \alpha(\theta) m\times (m \times \Delta m) - \beta(\theta) m \times \Delta m  &&\text{in}&&\Omega,\\
\partial_\nu m  &=0   &&\text{on}&&\partial\Omega, \\
|m|  &=1   &&\text{in}&& \Omega, \\
(u(0), F(0), \theta(0), m(0))& =(u_0, F_0, \theta_0, m_0) &&\text{in}&& \Omega. \\
\end{aligned}\right.
\end{equation}
This is a coupled system consisting of 
\begin{enumerate}
  \item the incompressible Navier--Stokes equations with variable viscosity coefficient $\upmu(\theta)$ for the velocity field
  $$u:(0, T)\times \Omega \to \bR^3$$ 
  with a right hand side that includes  the elastic stress tensor  induced by the magnetization
  field $m$ and the deformation tensor  $F$.
  Moreover, $\pi:(0, T)\times \Omega\to \bR$ denotes
  the pressure function;
 \vspace{1mm} 
\item a transport-dissipative system for the deformation tensor 
$$F:(0, T)\times \Omega \to \bM^3:=\bR^{3\times 3}$$ 
with variable dissipative coefficient $\kappa(\theta)$ and stretching term $(\nabla u)^\top F$; 

\vspace{1mm}
\item a transported anisotropic heat equation for the (absolute) temperature function
$$\theta: (0, T)\times \Omega \to\bR$$ 
with the heat flux $q$ given by the generalized Fourier law \cite{DhSh80, SoVi12}
\begin{equation}
\label{eqn:heatflux}
	q=q(u, F, \theta, m) =-K(u, F, \theta, m) \nabla \theta,
\end{equation}
where $K(u, F, \theta, m)$ is a  positive-definite, matrix-valued function of
$(u, F, \theta, m)$ which reflects the inhomogeneity of the material.   For example, one may choose 
$$q=-h(\theta)\nabla \theta-k(\theta)(\nabla \theta \cdot m)m,$$
 where $h(\theta)$ describes the variable heat conductivity, while $k(\theta)$ represents the inhomogeneous thermal conductivity
  along the direction preferred by the magnetization within the medium.
 In this case, $K(u,F,\theta ,m)=h(\theta)I_3+k(\theta) (m\otimes m)$; 
\vspace{1mm}
\item a convected Landau--Lifshitz--Gilbert system for the magnetization field
$$m:(0, T)\times \Omega \to \mathbb{S}^2=\{d\in \bR^3: |d|=1\}$$ 
with variable
Gilbert damping parameter $\alpha(\theta)$ and exchange parameter
$\beta(\theta)$. 
\vspace{1mm}
  \end{enumerate}
  
The model can be used to describe so-called smart fluids (magnetorheological fluids), that is, fluids
carrying magnetoelastic particles. 
Because of their remarkable properties,  magnetoelastic materials
are widely used in  technical applications. %such as sensors, to measure torque and force, generator for ultrasonic sound.

\medskip
 When fluids are subjected to heat, their molecules experience internal movement due to changes in temperature. This compound effect can be described by differential equations that govern the laws of these changes. 
 While numerous publications have been devoted to the dynamics of magnetoviscoelastic fluids 
 in the isothermal case (see, for example, \cite{BFLS18, KorSch22, DSS23,For16, GKMS21, KKS21, SchZa18,Zhao18}), there is a lack of research on the thermodynamic effects associated with these fluids.
 
A related class of thermodynamically consistent models for incompressible non-isothermal {\em nematic liquid crystal flows}  has been developed using the Ericksen--Leslie formalism, as discussed in \cite{DeALiu19, FFRS12,HiPr16,HiPr17,HiPr18}. 
Specifically,  De Anna  and Liu   extended the general Ericksen--Leslie system 
and the general Oseen--Frank energy density in \cite{DeALiu19}, and they obtain a global well-posedness result  of strong solutions for initial data that are close to equilibrium in suitable homogeneous Besov spaces.
Another approach to modeling non-isothermal nematic liquid crystals is presented in \cite{FFRS12}, where
 the authors introduce an energetically closed system and derive the equations using a generalized variational principle. 
 They show the existence of global weak solutions for suitable initial data in a three-dimensional bounded domain with a sufficiently regular boundary. 
 Meanwhile, Hieber and Pr\"{u}ss analyzed the non-isothermal Ericksen--Leslie system by means of maximal $L_p$-regularity techniques for 
 quasilinear parabolic evolution equations in \cite{HiPr16, HiPr18}. 
 They demonstrate the local existence of classical solutions and the stability of solutions subject to initial data that are close 
 enough to an equilibrium   for the case of linear boundary conditions.  It is also worth noting that  the authors in~\cite{FRSZ14, FRSZ15} consider
non-isothermal nematics using the $Q$-tensor as the order parameter. They prove global existence of weak solutions with the Landau--De Gennes polynomial potential and the Ball--Majumdar singular potential, which are commonly used to describe the configuration of the liquid crystal molecules.

\medskip\noindent
A detailed analysis reveals that the system~\eqref{magneto sys} features a quasilinear parabolic  structure,
and we are employing the theory of  maximal regularity, see for instance~\cite{PrSi16}, 
to study existence, uniqueness, and qualitative properties of solutions.
With this approach in mind, there are several difficulties that arise in the mathematical analysis. For instance,
\begin{itemize}
\item
The equation \eqref{magneto sys}$_6$ for the temperature $\theta$ contains terms that have a highly nonlinear dependence on the magnetic field $m$;  
namely, the equation contains a term
that is quadratic in second order derivatives of $m$.  On a technical level, this means that we need to work in function spaces that encode higher regularity
for the magnetic field $m$.
\item The flux vector $q$ in \eqref{eqn:heatflux} is allowed to depend in a nonlinear way on the variables $(u, F, \theta, m)$. 
The boundary condition $q\cdot \nu=0$ then leads to a nonlinear boundary condition    which is to be satisfied by the solutions. 
This implies that solutions `live on a nonlinear manifold,' and this adds significant challenges to a mathematical treatment.
\end{itemize}

Quasilinear parabolic systems with nonlinear boundary conditions have been studied in the literature by several authors,
for instance in \cite{Lun95, JLS14, LPS06, LPS08, MeyriesThesis, Mey12}.
However, the results contained in these publications cannot be applied directly to the model~\eqref{magneto sys}.
For instance, while these works cover a very general class of quasilinear parabolic systems with nonlinear boundary conditions,
they do not include a coupling to the Navier--Stokes equations.  
Moreover, these works do not feature equations which contain quadratic terms in the highest derivatives of some of the variables.

\medskip
Due to the structure of the nonlinear terms in the temperature equation, it seems infeasible to find a realization of \eqref{magneto sys} 
in so-called extrapolation spaces,
which would be helpful in order to absorb nonlinear boundary conditions of co-normal type, see for instance~\cite{Ama90b, Sim95}.

\medskip
Finally, we would like to mention that the techniques developed in this paper can be generalized to also 
cover fully nonlinear boundary conditions, and we could also admit nonlinear boundary conditions for the remaining variables.

\medskip
The manuscript is structured as follows. In Section \ref{secition:thermo}, we show that the system  \eqref{magneto sys}
is thermodynamically consistent. In addition, we provide a characterization of the equilibria and
we show that the critical points of the constrained entropy functional correspond exactly to these equilibria.
In Section~\ref{section:functional}, we introduce  a functional analytic setting to study the system \eqref{magneto sys}.
In Section~\ref{section:linear}, we provide existence and uniqueness results for some related linear problems, 
which will then form the basis to establish the local well-posedness of strong
solutions for system \eqref{magneto sys}, carried out in Section~\ref{section:localwell}.
In the main theorem of this section, Theorem~\ref{thm:MainLocal}, we show that the system  \eqref{magneto sys} generates 
a Lipschitz continuous semiflow on the state manifold (defined by the nonlinear boundary condition).
Here we have been inspired by the approach in \cite{LPS06, MeyriesThesis, Mey12}.
In addition, we show that the temperature satisfies a maximum principle.
In section~\ref{section:stability}, we provide criteria for global existence.
In addition, we study stability of constant equilibria;
in particular, we show that solutions that start close to a constant equilibrium exist globally and converge exponentially fast
to a (possibly different) constant equilibrium.
Finally, in Appendix A, we establish some relevant properties of fractional Sobolev spaces with temporal weights,
and in Appendix B, we study mapping properties of the nonlinearities associated to system \eqref{magneto sys}.

\bigskip\noindent
\textbf{Notation:} 
For the readers' convenience, we list here some notations and conventions used throughout the manuscript.

In the following, all vectors $a=(a_1,\cdots, a_n)\in \bR^n$ are viewed as column vectors. 
For two vectors $a,b\in\bR^n$, the Euclidean inner product is denoted by $a\cdot b$.
Given two matrices $A,B\in \bM^n$, the Frobenius matrix inner product $A:B$ is given by
$$
A:B={\rm Tr} (AB^{\sT}),
$$
where ${}^{\sT}$ is the transpose.
Suppose $\Omega$ is an open subset of $\bR^n$.
If $u\in C^1(\Omega;\bR^n)$, we set $\nabla u(x)= e_j\otimes \partial_ju(x)$
for $x\in\Omega$.
Hence, for $u=(u_1,\cdots , u_n)\in C^1(\Omega; \bR^n)$, we have 
$$[\nabla u(x)]_{ij}= \partial_i u_j(x), \;\; 1\le i, j\le n, \;\; x\in \Omega.$$
We note that $[\nabla u(x)]^{\sT}$ corresponds to the Fr\'echet derivative of $u$ at $x\in\Omega$.

If $A\in C^1(\Omega;\bM^n)$, its divergence $\nabla \cdot A$ is the vector function defined by
\begin{equation}
\label{divergence-matrix}
(\nabla \cdot A)(x)=(\partial_j A(x))^{\sT}e_j, \;\; x\in \Omega.
\end{equation}
Hence, if $A=[a_{ij}]\in C^1(\Omega;\bM^n)$, its divergence is given by
 $$[(\nabla\cdot A)(x)]_i=\partial_j a_{ji}(x), \;\; i=1,\cdots , n,\;\; x\in \Omega. $$
Here and in the sequel, we use the summation convention, indicating that terms with repeated indices are added.
We note   that 
 \eqref{divergence-matrix} implies
\begin{equation}
\label{divergence-property}
(\nabla\cdot A)\cdot u = \nabla\cdot (Au)- A:\nabla u, \quad A\in C^1(\Omega;\bM^n), \ u\in C^1(\Omega;\bR^n).
\end{equation}
For a matrix $A\in C^1(\Omega;\bM^n)$, we set 
$|\nabla A|^2=\partial_j A: \partial_jA.$

\smallskip
For functions 
$f,g\in L_2(\Omega; \bR^m)$, 
$$(f|g)_\Omega =\int_\Omega f\cdot g\, dx$$ denotes the $L_2$-inner product.
For any  Banach space $X$, $s\ge 0$, $p \in (1,\infty)$, 
$W^s_p(\Omega;X)$ denote the  $X-$valued  Sobolev(-Slobodeckij)   spaces.
When the choice of $X$ is clear from the context, we will just write $W^s_p(\Omega)$. 

\goodbreak
\medskip
Given any $T\in (0,\infty]$, we will denote the interval $(0,T)$ by 
$J_T$.
For   $p\in (1,\infty)$ and $\mu\in (0,1]$, the $X$-valued $L_p$-spaces with temporal weight are defined by
$$
L_{p,\mu}(J_T;X):=\left\{ f: (0,T)\to X: \, t^{1-\mu}f\in L_p(J_T;X)  \right\}.
$$
Similarly,
$$
W^k_{p,\mu}(J_T;X):=\left\{ f \in    W^k_{1,loc}(J_T;X):\,  \pa_t^j f\in L_{p,\mu}(J_T;X), \, j=0,1,\ldots,k \right\}.
$$
For $s\in (0,1)$, the Sobolev-Slobodeckij spaces   with temporal  weights  are defined as
$$
W^s_{p,\mu}(J_T;X):= \{ u \in L_{p,\mu}(J_T;X): \, 
\|u\|_{W^s_{p,\mu}(J_T;X)} =  \|u\|_{L_{p,\mu}(J_T;X)}  + [u]_{W^s_{p,\mu}(J_T;X)}<\infty \} ,
$$
where  
\begin{equation}
\label{seminorm}
[u]_{W^s_{p,\mu}(J_T;X)} := \left(\int_0^T  \int_0^t \tau^{p(1-\mu)} \frac{\| u(t)- u(\tau)\|_X^p}{(t-\tau)^{s p+1}}\, d\tau  d t \right)^{1/p}.
\end{equation}
See \cite[Formula~(2.6)]{MeSc12}.
$\|\cdot\|_{W^s_{p,\mu}(J_T;X)}$ is termed the intrinsic norm of  $W^s_{p,\mu}(J_T;X)$.

For any two Banach spaces $X$ and $Y$, the notation $\cL(X,Y)$ stands for the set of all bounded linear operators from $X$ to $Y$ and $\cL(X):=\cL(X,X)$. 
$\cL is(X,Y)$ denotes the subset of $\cL(X,Y)$ consisting of linear isomorphisms from $X$ to $Y$.

Finally, in this article,     $\Phi: \bR_+\to \bR_+$ always denotes a continuous non-decreasing function satisfying
$$
\Phi(r)\to 0^+ \quad \text{as } r\to 0^+.
$$

\section{Thermodynamic consistency}\label{secition:thermo}
   In this section  we discuss  the thermodynamic properties of
   \eqref{magneto sys}. We introduce the following assumptions:
   \begin{equation}
   \label{assumption}
    % \left\{
     \begin{aligned}
   &\upmu, \kappa, \alpha, \beta \in  C^5 (\bR),\quad  K\in C^5 (\bR^{3} \times \bM^3 \times \bR\times \bR^{3};{\rm sym}\,(\bM^3));   \\
    & \upmu\ge \underline{\upmu},\quad \kappa\ge  \underline{\kappa}, \quad \alpha\ge \underline{\alpha},\quad K\ge \underline{c}I_3, 
  \end{aligned}
   % \right.
  \end{equation}
  where   $\underline{\upmu}, \underline{\kappa}, \underline{\alpha}$ and $\underline{c}$ are given positive constants. 
Here we assume $C^5$-smoothness for convenience.

      We assume that the Helmholtz free energy density $\psi$ is given by 
      \begin{equation*}
        \psi= \psi(F,\theta, m)= \frac{1}{2}|F|^2 + \frac{1}{2}|\nabla m|^2-\theta\ln\theta. 
      \end{equation*}
      Then the entropy density $\eta$ and the internal energy density $e_{\rm int}$ can be obtained via the following thermodynamic relations:
      \begin{equation*}
        \begin{array}[]{ll}
          \eta=-\partial_\theta \psi=1+\ln \theta & \text{(the Maxwell relation)}
          \\
          e_{\rm int}=\psi+\theta \eta =\frac{1}{2}|F|^2 + \frac{1}{2}|\nabla m|^2+\theta & \text{(the Legendre transform of $\psi$ w.r.t. $\eta$).}\\
        \end{array}
      \end{equation*}
      We can derive from the $\theta$ equation in \eqref{magneto sys} the entropy evolution 
      \begin{equation}
        \partial_t \eta+u\cdot \nabla \eta + \nabla\cdot g=r,\label{eqn:entropy}
      \end{equation}
      where $g$ denotes the entropy flux which satisfies the Clausius--Duhem relation
      \begin{equation*}
        g=\frac{q}{\theta},
      \end{equation*}
and where the entropy production rate $r$ is given as 
\begin{equation}
\label{entropy-production}
  r=\frac{1}{\theta}\left[ \upmu(\theta)|\nabla u|^2+\kappa(\theta)|\nabla F|^2+\alpha(\theta)|\Delta m+|\nabla m|^2 m|^2-\frac{q\cdot \nabla \theta }{\theta}\right].     
\end{equation}
The \emph{thermodynamic consistency} of \eqref{magneto sys} is given by the following proposition.
%%%%%%%%%%%%%%
\begin{proposition}\label{Prop: thermo consistent}
Suppose $(u, F, \theta, m)$ is a solution of \eqref{magneto sys} with the regularity properties asserted in Theorem~\ref{thm:MainLocal}.
  Then the following properties hold.
  \begin{enumerate}
    \item[{\rm (a)}] {\em (}First law of thermodynamics{\em )}. The total energy 
    $$\sE=\sE(u,F,\theta,m)=\int_{\Omega}(\frac{1}{2}|u|^2+e_{\rm int})\, dx=\int_\Omega (\frac{1}{2} |u|^2 + \frac{1}{2}|F|^2 + \frac{1}{2} |\nabla m|^2 +\theta)\,dx$$
    is preserved along the solution $(u, F, \theta, m)$. 
\vspace{1mm}
 \item[{\rm (b)}] {\em (}Second law of thermodynamics{\em )}. The total entropy 
 $$\sN=\sN(\theta) =\int_\Omega \eta\, dx=\int_\Omega (1+ \ln\theta)\,dx $$ 
 is non-decreasing   along the solution $(u, F, \theta, m)$.  \\
 In fact, the entropy production rate $r$ is always non-negative, i.e., $r\ge 0$. 
\end{enumerate}   
\end{proposition}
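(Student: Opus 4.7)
The plan is to derive the energy identity and entropy identity by testing each equation of \eqref{magneto sys} against a suitable quantity, and to show that all nonlinear couplings cancel pairwise. The regularity of Theorem~\ref{thm:MainLocal} will be enough to justify every integration by parts, and the nonlinear boundary conditions ($u=0$, $F=0$, $\partial_\nu m=0$, $q\cdot\nu=0$) are precisely the ones needed for boundary terms to drop.

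For (a), I will test the momentum equation against $u$, the deformation equation against $F$ in the Frobenius inner product, the temperature equation against $1$, and use $\frac{d}{dt}\int \tfrac{1}{2}|\nabla m|^2 = -\int \Delta m\cdot \partial_t m$ (by $\partial_\nu m = 0$) combined with the $m$-equation. The incompressibility $\nabla\cdot u=0$ kills all transport contributions of the form $\int u\cdot \nabla(\cdots)$, and eliminates the pressure. The viscous, elastic-dissipative, and magnetic-dissipative terms produce $-\int \upmu|\nabla u|^2$, $-\int \kappa|\nabla F|^2$, and (after using $m\times(m\times\Delta m)=(m\cdot\Delta m)m-\Delta m$ together with $m\cdot\Delta m = -|\nabla m|^2$, which follows from $|m|=1$) $-\int\alpha|\Delta m+|\nabla m|^2 m|^2$, matching exactly the right-hand side of the $\theta$-equation. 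The key cancellations are the two crossed couplings: the identity $\nabla\cdot(\nabla m\odot\nabla m)=\tfrac{1}{2}\nabla|\nabla m|^2 + (\nabla m)^{\sT}\Delta m$ yields $\int u\cdot \nabla\cdot(\nabla m\odot\nabla m)=\int (u\cdot\nabla m)\cdot \Delta m$, which cancels the transport contribution coming from the $m$-equation; and using \eqref{divergence-property} together with symmetry of $FF^{\sT}$ together with the identity $(\nabla u)^{\sT} F : F = \nabla u : FF^{\sT}$ makes the stretching term in the $F$-equation cancel the elastic-stress term $\int u\cdot\nabla\cdot(FF^{\sT})$ in the $u$-equation. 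Summing the four tested equations and invoking $q\cdot\nu=0$ on $\partial\Omega$ gives $\tfrac{d}{dt}\sE=0$.

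For (b), the pointwise identity \eqref{eqn:entropy} has already been derived by the authors from the $\theta$-equation via the chain rule applied to $\eta=1+\ln\theta$. I only need to verify two things. First, the entropy production is nonnegative: the first three summands in \eqref{entropy-production} are nonnegative by the lower bounds in \eqref{assumption}, while $-q\cdot\nabla\theta/\theta = K\nabla\theta\cdot\nabla\theta/\theta \ge \underline{c}|\nabla\theta|^2/\theta \ge 0$, which uses both positive-definiteness of $K$ and strict positivity of $\theta$ (the latter being guaranteed by the maximum principle for $\theta$ that is part of Theorem~\ref{thm:MainLocal}). Second, integrating \eqref{eqn:entropy} in space, the convective term $\int u\cdot\nabla\eta$ vanishes by incompressibility and $u|_{\partial\Omega}=0$, and the entropy flux divergence integrates to $\int_{\partial\Omega} (q\cdot\nu)/\theta = 0$; hence $\tfrac{d}{dt}\sN = \int_\Omega r\,dx \ge 0$.

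The bookkeeping of the two magnetic-elastic cancellations — between the $u$-equation and the $m$-equation, and between the $u$-equation and the $F$-equation — is the only delicate point, as it requires being careful with the matrix conventions set in the Notation section. Everything else is standard testing-and-integration-by-parts once the appropriate boundary conditions have been accounted for.
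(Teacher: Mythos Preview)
Your proposal is correct and follows essentially the same route as the paper: for (a) you derive the balance \eqref{eqn:ener1} by testing the $u$-, $F$-, and $m$-equations (the paper outsources these computations to \cite[Proposition~4.1]{DSS23}, but the content is exactly what you describe), add the integrated $\theta$-equation \eqref{eqn:ener2}, and observe the cancellation; for (b) you integrate the entropy identity \eqref{eqn:entropy} and check $r\ge 0$ just as the paper does. The only extra detail you supply is the explicit bookkeeping of the two cross-coupling cancellations, which the paper leaves to the cited reference.
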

%%%%%%%%%%%%
\begin{proof}
Let $(u, F, \theta, m)$ be a solution of  \eqref{magneto sys} with initial value $z_0=(u_0,F_0,\theta_0,m_0)$ defined on its maximal interval of existence
$[0, T_+(z_0))$, see  Theorem~\ref{thm:MainLocal} for the precise regularity assertions. Let $T\in (0, T_+(z_0))$ be fixed.
For notational simplicity, we suppress the time variable in the following computations.

\medskip\noindent
  For ({\rm a}), we follow the calculations in \cite[Proposition 4.1]{DSS23} to obtain
  \begin{equation}
  \label{eqn:ener1}
    \begin{aligned}
      \frac{d}{dt}&\int_{\Omega}\frac{1}{2}\left( |u|^2+|F|^2+|\nabla m|^2 \right) dx\\
      &=-\int_{\Omega}\left[ \upmu(\theta)|\nabla u|^2+\kappa(\theta)|\nabla F|^2+\alpha(\theta)|\Delta m+|\nabla m|^2 m|^2 \right]dx, \quad t\in (0, T).
    \end{aligned}
  \end{equation}
  Meanwhile, integrating the $\theta$ equation in \eqref{magneto sys} over $\Omega$, from an integration by parts 
 and the relations $\nabla\cdot u=0$ and $(u,q\cdot \nu)=(0,0)$ on $\partial\Omega$, we have
  \begin{equation}
    \frac{d}{dt}\int_\Omega \theta\, dx=\int_{\Omega}\left[ \upmu(\theta)|\nabla u|^2+\kappa(\theta)|\nabla F|^2+\alpha(\theta)|\Delta m+|\nabla m|^2 m|^2 \right] dx,
    \quad t\in (0, T).
    \label{eqn:ener2}
  \end{equation}
  Now adding \eqref{eqn:ener1} and \eqref{eqn:ener2} yields $\frac{d}{dt}\sE=0$ for $t\in (0,T)$.
  
  \medskip\noindent
  For ({\rm b}), we obtain from the $\theta$ equation in \eqref{magneto sys}, the Maxwell relation $\eta=1+\ln \theta$, and \eqref{eqn:entropy} that
  \begin{equation*}
  	\begin{aligned}
    &\partial_t\eta+u\cdot \nabla \eta&\\
    &=\frac{1}{\theta}(\partial_t \theta+u\cdot \nabla \theta)\\
    &=\frac{1}{\theta}\left( -\nabla\cdot q+\upmu(\theta)|\nabla u|^2+\kappa(\theta)|\nabla F|^2+\alpha(\theta)|\Delta m+|\nabla m|^2 m|^2 \right)\\
    &=-\nabla \cdot  g+\frac{1}{\theta}\left[ \upmu(\theta)|\nabla u|^2+\kappa(\theta)|\nabla F|^2+\alpha(\theta)|\Delta m+|\nabla m|^2 m|^2-\frac{q\cdot \nabla \theta}{\theta} \right]\\
    &=-\nabla\cdot g+r,\qquad t\in (0,T).
\end{aligned}
 \end{equation*}
 By \eqref{eqn:heatflux} we have $-q\cdot \nabla \theta=K\nabla \theta\cdot \nabla \theta\ge\underline{c}|\nabla \theta|^2\ge 0$, and hence $r\ge 0$. 
This implies 
 \begin{equation}
 \label{entropy-derivative}
  \frac{d}{dt} {\sf N}=  \frac{d}{dt} \int_\Omega \eta\,dx =\int_\Omega (-u \cdot \nabla \eta - \nabla\cdot g +r)\,dx = \int_\Omega r\,dx \ge 0, \quad t\in (0,T),
\end{equation}
 and hence the assertion holds.
\end{proof}

%%%%%%%%%%%%%%%%%%%
\subsection{Entropy and equilibria} 
Here, we follow the arguments in  \cite[Section 1.2]{PrSi16},  see also~\cite{HiPr18},
 to discuss the equilibria of system~\eqref{magneto sys} and their connection to the critical points of the entropy functional.
 We begin with a characterization of the equilibria.
 %%%%%%%%%%%%%%%%%%
\begin{proposition}
\label{Lyapunov}
\phantom{hallo}
\begin{enumerate}
\item[{\rm (a)}]
The set  $\mathcal{E}$ of equilibria of \eqref{magneto sys} is given by
\begin{equation*}
  \mathcal{E}=\{(u_*, F_*, \theta_*, m_*)\in \{0\}\times\{0\}\times(0, \infty)\times C^\infty(\overline\Omega)\},
\end{equation*}
where $m_*$ satisfies the harmonic map equation with homogeneous Neumann boundary condition
\begin{equation}
\label{harmonic-map}
\left\{\begin{aligned}
\Delta m + |\nabla m |^2 m &= 0 &&\text{in}&&\Omega,\\
  |m| &\equiv 1  &&\text{in}&&\Omega,\\
\partial_\nu m  &=0   &&\text{on}&&\partial\Omega .
\end{aligned}\right.
\end{equation}
Moreover, the equilibrium pressure is given by $\pi_*=-\frac{1}{2}|\nabla m_*|^2 +C$,
where $C$ is some constant. 
\vspace{1mm}
\item[{\rm (b)}]
$-{\sf N}$ is a strict Lyapunov function for \eqref{magneto sys}.
\end{enumerate}
\end{proposition}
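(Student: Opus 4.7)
The strategy is to read both statements off the identity $\tfrac{d}{dt}\sN(t)=\int_\Omega r\,dx$, with $r$ the pointwise entropy production rate \eqref{entropy-production} established in the proof of Proposition~\ref{Prop: thermo consistent}. The key observation is that $r$ is a sum of four non-negative summands, each of the form ``strictly positive coefficient times a (matrix-weighted) square,'' thanks to the lower bounds in \eqref{assumption}. Hence $r\equiv 0$ forces every dissipative channel to vanish pointwise and reveals the algebraic structure of the equilibria.

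For part (a), let $(u_*,F_*,\theta_*,m_*)$ be an equilibrium. Time-independence gives $\int_\Omega r\,dx=0$, and $r\ge 0$ then yields $r\equiv 0$. Using $\upmu,\kappa,\alpha>0$ and $K\ge \underline c I_3$, the four summands vanish separately, producing $\nabla u_*=0$, $\nabla F_*=0$, $\nabla\theta_*=0$, and $\Delta m_*+|\nabla m_*|^2 m_*=0$. Combined with the Dirichlet conditions $u_*|_{\partial\Omega}=0$ and $F_*|_{\partial\Omega}=0$ and connectedness of $\Omega$, this upgrades to $u_*\equiv 0$, $F_*\equiv 0$ and $\theta_*$ a positive constant; together with $|m_*|\equiv 1$ and $\partial_\nu m_*=0$ it reproduces exactly \eqref{harmonic-map}. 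I would then substitute back into \eqref{magneto sys} to verify consistency: the $F$- and $\theta$-equations are automatic; the LLG equation reduces to $-\alpha(\theta_*) m_*\times(m_*\times \Delta m_*)-\beta(\theta_*)m_*\times\Delta m_*=0$, which holds because $\Delta m_*=-|\nabla m_*|^2 m_*$ is parallel to $m_*$; and the Navier--Stokes equation reduces to $\nabla\pi_*=-\nabla\cdot(\nabla m_*\odot\nabla m_*)$. A direct computation using $m_*\cdot\partial_i m_*=0$ (from $|m_*|\equiv 1$) together with the harmonic-map identity gives $\nabla\cdot(\nabla m_*\odot\nabla m_*)=\tfrac12\nabla|\nabla m_*|^2$, hence $\pi_*=-\tfrac12|\nabla m_*|^2+C$.

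For part (b), monotonicity $\tfrac{d}{dt}(-\sN)\le 0$ is already contained in Proposition~\ref{Prop: thermo consistent}(b). To upgrade to a strict Lyapunov function, I would show that if $-\sN$ is constant on some non-trivial time interval along a solution, then the solution is an equilibrium. On such an interval $\int_\Omega r(t,\cdot)\,dx\equiv 0$ forces $r\equiv 0$ pointwise in $(t,x)$, and the consequences from (a) applied at each time yield $u\equiv 0$, $F\equiv 0$, $\theta$ spatially constant, and $m(t,\cdot)$ solving \eqref{harmonic-map}. Plugging these back into \eqref{magneto sys} gives $\partial_t\theta=0$ (since $q=-K\nabla\theta=0$) and $\partial_t m=0$ (since $\Delta m$ is parallel to $m$), so the state is time-independent and hence an equilibrium by part~(a).

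The only step I expect to demand real work is the smoothness assertion $m_*\in C^\infty(\overline\Omega)$: a priori $m_*$ only lies in the base function space of the state manifold from Theorem~\ref{thm:MainLocal}, so one needs an elliptic bootstrap on the constrained system $\Delta m_*+|\nabla m_*|^2 m_*=0$, $\partial_\nu m_*=0$, $|m_*|=1$, exploiting the $C^3$-regularity of $\partial\Omega$ and the $C^5$ regularity in \eqref{assumption} to iterate Schauder-type estimates up to full $C^\infty$.
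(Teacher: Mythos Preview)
Your proposal is correct and follows essentially the same route as the paper: both arguments deduce $\nabla u_*=\nabla F_*=\nabla\theta_*=0$ and $\Delta m_*+|\nabla m_*|^2 m_*=0$ from $\int_\Omega r\,dx=0$ together with $r\ge 0$, then read off the pressure from \eqref{magneto sys}$_1$, and for (b) use that $\Delta m\parallel m$ (equivalently, $m\times\Delta m=0$) to get $\partial_t m=0$. Your additional remark that the $C^\infty(\overline\Omega)$ regularity of $m_*$ requires a separate elliptic bootstrap is well taken---the paper simply asserts this without further comment.
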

%%%%%%%%%%%%%%%%%%%%
\begin{proof}
(a) Suppose $z_*=(u_*, F_*, m_*,\theta_*)$ is an equilibrium for~\eqref{magneto sys}.
Then $\frac{d}{dt}{\sf N}=0$ and it follows from~\eqref{entropy-production}, \eqref{entropy-derivative} that
 $(\nabla u_*, \nabla F_*,\nabla \theta_*, \Delta m_* + |\nabla m_*|^2 m_* )=(0,0,0,0)$, as 
$$-q_* \cdot \nabla \theta_* = K(z_*)\nabla \theta_* \ge \underline{c} |\nabla \theta_*|^2.$$
The boundary conditions $(u_*, F_*)=(0,0)$ readily imply  $(u_*,F_*)=(0,0)$. In addition, we conclude that $\theta_*$ is a constant.
Finally, we can derive from \eqref{magneto sys}$_1$ that $\nabla \pi_*=-\nabla(\frac{1}{2}|\nabla m_*|^2)$, and hence
$\pi_*=-(\frac{1}{2}|\nabla m_)|^2+C)$ with some constant $C$.
 
 \medskip\noindent
 (b) 
 Let  $(u,F, \theta , m)$ be a solution of~\eqref{magneto sys} with the regularity properties of Theorem~\ref{thm:MainLocal}, defined on the 
 maximal interval of existence $[0, T_+(z_0))$. 
Suppose that ${d\over dt} \sN=0 $ on some interval $(t_1, t_2)\subset (0, T_+(z_0))$.
Then $r\ge 0$ implies  $r(t)\equiv0$ in $\Omega$ for  $t\in(t_1, t_2)$.
We conclude as above that $(u(t),F(t))=(0,0)$ and $\theta(t)=c$ for $t\in (t_1,t_2)$, where $c>0$ is a constant. Therefore,
\begin{equation*}
\label{contstant}
(\partial_t u(t), \partial_t F(t), \partial_t \theta (t))=(0,0,0), \quad t\in (t_1,t_2).
\end{equation*}
Moreover, $\Delta m(t) + |\nabla m(t)|^2 m(t) = 0$ for $t\in (t_1,t_2).$
Taking the cross product of both sides of this equation  with $m(t)$ results in 
\begin{equation*}
	m(t)\times \Delta m(t)=-|\nabla m(t)|^2 (m(t)\times m(t))=0,\quad t\in (t_1,t_2).
\end{equation*}
Hence, we conclude that $\pa_t m(t)=0$ for $t\in (t_1,t_2)$. 
This shows that the solution is at equilibrium. 
\end{proof}
%%%%%%%%%
\medskip\noindent
We will now provide an informal discussion concerning the critical points of the \emph{constrained entropy functional.}

Suppose that $(u, F, \theta, m)$ is a sufficiently smooth critical point of $\sN$
with $\theta>0$, subject to the constraints $G(m)=(|m|^2-1)/2=0$,
$\sE=\sE_0$, and the boundary conditions in \eqref{magneto sys}. By the Lagrange multiplier method we get
$\lambda_{\sE}\in\mathbb{R}$ and $\lambda_G\in L_2(\Omega)$ such that the first variation of $\sN$ at $z=(u, F, \theta, m)$ with respect to 
$w= (v, J,  \vartheta,n)$ satisfies
\begin{equation*}
  \langle (\sN'+\lambda_{\sE}\sE'+\lambda_G G')(z) | w \rangle=0.
\end{equation*}
We have
\begin{equation*}
	\begin{aligned}
  \langle \sN'(z) | w \rangle&=\int_\Omega \partial_\theta \eta \,\vartheta\, dx=\int_{\Omega}\frac{\vartheta}{\theta}  \, dx,\\
  \langle \lambda_{\sE}\sE'(z)|w \rangle&=\int_{\Omega}\lambda_{\sE}(u\cdot v+F:J +\vartheta+ \nabla m :\nabla  n)\, dx,\\
  \langle \lambda_{G}G'(z)|w\rangle&=\int_{\Omega}\lambda_{G}\,m\cdot n  \, dx.
\end{aligned}
\end{equation*}
This yields the relation
\begin{equation}\label{eqn:critialeq}
  0=\int_{\Omega}[({1}/{\theta}+\lambda_{\sE})\vartheta+\lambda_{\sE}(u\cdot v+F:J- \Delta m \cdot n)+\lambda_G\, m\cdot n ]\, dx,
\end{equation}
where we employed the boundary condition $\partial_\nu m=0$ to derive
$$\int_\Omega \nabla m : \nabla n\,dx = \int_\Omega -\Delta m \cdot n\,dx.$$
Now, setting $(v,J,n)=(0,0,0)$ in \eqref{eqn:critialeq}, it follows that $\lambda_{\sE}=-1/\theta$, as $\vartheta$ can be arbitrary.
 Notice that $\lambda_{\sE}\in\mathbb{R}$, hence $\theta=\theta_*$ is constant and
$\lambda_{\sE}<0$. Similarly,  setting $(u, F)=(0,0)$ we see that  $m$ solves the equation
$-\lambda_{\sE}\Delta m+\lambda_{G}m=0$, with boundary condition
$\partial_\nu m=0$. Moreover, $|m|=1$ on $\Omega$. We can then conclude that
\begin{equation}
\label{lambda-G}
  \lambda_G =\lambda_G|m|^2=\lambda_{\sE}\Delta m\cdot m
  =\lambda_{\sE}\nabla\cdot [(\nabla m)m]-\lambda_{\sE}|\nabla m|^2
  =-\lambda_\sE |\nabla m|^2,
\end{equation}
where we used the fact that $(\nabla m)m=0$ due to $|m|=1$. 
This implies 
$$\Delta m+|\nabla m|^2 m=0,$$ hence $m$ satisfies the harmonic map equation~\eqref{harmonic-map}.
Therefore, the critical points $z=(u,F, \theta, m)$ of the constrained entropy functional correspond exactly to the equilibria $\cE$ of the system.

\medskip
Meanwhile, let 
\begin{equation*}
	\sH(z):=(\sN''+\lambda_\sE \sE''+\lambda_G G'')(z)
\end{equation*}
be the second variation of $\sN$ at a generic point $z=(u,F,\theta, m)$. A direct computation yields
\begin{equation*}
  ( \sH(z) w|w )_\Omega=\int_{\Omega}[-\frac{1}{\theta^2}\vartheta^2+
  \lambda_{\sE}(|v|^2+|J|^2 + |\nabla n|^2 ) + \lambda_G\, |n|^2]\, dx,
\end{equation*}
where $w=(v,J,\vartheta, n)$.
At a critical point $z_*=(0,0,\theta_*, m_*)$ we obtain, in conjunction with the relation $\lambda_G = -\lambda_\sE |\nabla m_*|^2$, see~\eqref{lambda-G},  
\begin{equation*}
 ( \sH(z_*) w|w )_\Omega =\int_{\Omega}[-\frac{1}{\theta_*^2}\vartheta^2- \frac{1}{\theta_*}(|v|^2+|J|^2 +|\nabla n|^2  - |\nabla m_*|^2 |n|^2]\, dx.
 \end{equation*}
A moment of reflection shows that ${N}({\sf E}^\prime (z_*))$ and ${N}( G^\prime(z_*))$,
the  null space of ${\sf E}^\prime (z_*)$ and $G^\prime(z_*)$, respectively, are given by
\begin{equation*}
\begin{aligned}
&{N}({\sf E}^\prime (z_*))  =  \{(v,J,\vartheta, n): \; \int_\Omega \vartheta\,dx =0,\ \int_\Omega \nabla m_*: \nabla n\,dx=0\}, \\ 
& {N}( G^\prime(z_*))=  \{(v,J,\vartheta, n):\;  m_*\cdot n =0\ \text{ in }\ \Omega \}.
\end{aligned}
\end{equation*}
We note that the condition $m_*\cdot n=0$ in $\Omega$ implies $ \int_\Omega \nabla m_*: \nabla n\,dx=0$.
This follows from  the relation
$$0= |\nabla m_*|^2 m_* \cdot n = -\Delta m_* \cdot n$$
and an integration by parts.
Hence, we have 
\begin{equation*}
{N}_*:= {N}({\sf E}^\prime (z_*))\cap {N}( G^\prime(z_*))=\{(v,F,\vartheta, n):\; 
 \int_\Omega \vartheta\,dx =0,\ \ m_*\cdot n=0\ \text{ in }\ \Omega\}.
\end{equation*}
This yields
\begin{equation*}
 \label{eqn:secondVar}
 (\sH(z_*) w|w )_\Omega =-\frac{1}{\theta_*} \int_{\Omega}\left( \frac{ \vartheta^2 } { \theta_*} + |v|^2+|J|^2 +|\nabla n|^2  - |\nabla m_*|^2 |n|^2\right)\, dx,\quad  w=(v,J,\vartheta, n)\in {N}_*.
\end{equation*}
We conclude that $\sH(z_*)|_{N_*}$, the restriction of $\sH(z_*)$ on $N_*$, is negative semi-definite iff
\begin{equation}
\label{stab}
\int_\Omega (|\nabla n|^2  - |\nabla m_*|^2 |n|^2)\,dx \ge 0,\quad n\in C^\infty_c(\Omega; \bR^3).
\end{equation}
In case the relation  \eqref{stab} holds for all $n$ satisfying $m_*\cdot n=0$ in $\Omega$,  $m_*$ is called a stable harmonic map,
see for instance \cite[formula (1.5)]{LiWa06}.
Note that \eqref{stab} holds if $m_*\in\bS^2$ is constant.
This shows that the validity of the relation \eqref{stab} is necessary for the  constrained entropy functional to have a (local) maximum at
a critical point. 
We refer to \cite[Theorem 26.2]{Dei85} for more background on extrema for constrained problems in infinite-dimensional Banach spaces.

\medskip
 Summarizing, we have (informally) shown the following result.
\begin{itemize}
    \item
    The equilibria of \eqref{magneto sys} are precisely the critical points of the entropy functional with prescribed energy. 
 \item
The condition \eqref{stab} is necessary for the constrained entropy functional to have a local maximum at a critical point.
This always holds true in case $m_*\in \bS^2$.
  \end{itemize}

\begin{remark}
%(a)  The discussion above shows that the model is thermodynamically consistent and also thermodynamically stable for those critical points
%satisfying   with $m_*\in \bS^2$.

\medskip\noindent
(a) We notice that equilibria which are (local) maxima of the constrained entropy functional are the ultimate states where the system is evolving towards.
These are necessarily (locally) stable, as entropy can then no longer increase.

\medskip\noindent
(b) It is stated in \cite[Lemma 5.2]{HNPS16},   see also~\cite[Lemma 1]{HiPr16}  and \cite[Lemma 12.2.4]{PrSi16},
that the nonlinear problem~\eqref{harmonic-map}
admits only constant solutions $m_*\in\bS^2$.
However, this assertion is not correct in the form stated, as the following example shows:
Let $\Omega=\{x\in\bR^3: 0<r_1<|x|<r_2\}$ and $m:\Omega \to\bS^2$ be defined by $m_*(x)=x/|x|.$
Then $m$ is a (non-constant) solution of ~\eqref{harmonic-map}.
The statement holds, for instance, if the magnetic field $m$ is restricted to a spherical cap on $\mathbb S^2$.
\end{remark}

%%%%%%%%%%%%%%%%%%%%%%%%%%%%%%%%%%%%%%%%%%%%%%%%%%%

\section{The functional analytic setting}
\label{section:functional}

Following the formulation in \cite[Section 2]{DSS23}, we rewrite the
system \eqref{magneto sys} as
\begin{equation}
  \label{eqn:Rewrit1}
  \left\{
  \begin{aligned}
    \pa_t u+u\cdot \nabla u-\nabla\cdot (\upmu(\theta)\nabla u)+\nabla \pi&=-\nabla\cdot (\nabla m\odot \nabla m)+\nabla \cdot (F F^\top) &&\text{in}&&\Omega, \\
    \nabla\cdot u&=0&&\text{in}&&\Omega, \\
    u&=0&&\text{on}&&\pa\Omega, \\
    \pa_tF   +u\cdot \nabla F  -\nabla\cdot (\kappa(\theta)\nabla F)&=(\nabla u)^\top F &&\text{in}&&\Omega, \\
    F&=0&&\text{on}&&\pa\Omega, \\
        \pa_t \theta   +u\cdot \nabla \theta  -\nabla\cdot (K(z) \nabla \theta) &=\upmu(\theta)|\nabla u|^2+\kappa(\theta)|\nabla F|^2 && &&\\
    &\quad +\alpha(\theta)|\Delta m+|\nabla m|^2 m|^2  &&\text{in}&&\Omega, \\
     \nu \cdot \left( K(z) \nabla \theta \right)&=0&&\text{on}&&\pa\Omega, \\
    \pa_t m-(\alpha(\theta) I_3-\beta(\theta) \sM(m))\Delta m&=\alpha(\theta)|\nabla m|^2 m-u\cdot \nabla m&&\text{in}&&\Omega, \\
    \pa_\nu m&=0&&\text{on}&&\pa\Omega, \\
    (u(0), F(0), m(0), \theta(0))&=(u_0, F_0, m_0, \theta_0)&&\text{in}&&\Omega,
  \end{aligned}
  \right.
\end{equation}
where  $z=(u, F, \theta, m)$,
\begin{equation*}
  \sM(m)=
  \begin{bmatrix}
    0&-m_3&m_2\\
    m_3&0&-m_1\\
    -m_2&m_1&0
  \end{bmatrix}, \qquad\text{with}\quad  m=(m_1, m_2, m_3).
\end{equation*}
One readily verifies that $m\times \Delta m=\sM(m)\Delta m$. 
Notice that the constraint $|m|=1$ is dropped in \eqref{eqn:Rewrit1}. It will be shown later  that the condition $|m|\equiv 1$ is in fact preserved, provided $|m_0|=1$. 

Given any $\tilde{z}=(\tilde{u}, \tilde{F}, \tilde{\theta}, \tilde{m})\in  C^1(\overline{\Omega};\bR^3\times \bR^9\times \bR  \times \bR^3)  $,
%\times C^2(\overline{\Omega};\bR^3)$,
we introduce the operators
\begin{equation*} 
%\label{differential operators}
\begin{aligned}
&A^1 (\tilde{\theta}) u       : = - P_H (\nabla \cdot ( \upmu (\tilde{\theta}) \nabla u)),  \\
&A^2 (\tilde{\theta}) F       : =- \nabla\cdot (\kappa(\tilde{\theta}) \nabla F),   \\
&A^3(\tilde{z})\theta         : = - \nabla\cdot (K(\tilde{z}) \nabla \theta),   \\
&A^4 (\tilde{\theta},\tilde{m}) m  := - ( \alpha(\tilde{\theta}) I_3 - \beta(\tilde{\theta}) \sM(\tilde{m}) )\Delta m - \alpha(\tilde{\theta} ) |\nabla \tilde{m}|^2 m.
\end{aligned}
\end{equation*}

%%%%%%%%%%%%%%%%%%%%%%%%%%%%%%%%
%normally elliptic
%%%%%%%%%%%%%%%%%%%%%%%%%%%%%%%%

For later use, we show that the principal part  $A^4_\sharp(\tilde \theta, \tilde m)=( \alpha(\tilde{\theta}) I_3 - \beta(\tilde{\theta}) \sM(\tilde{m}) )\Delta $
of  $A^4 (\tilde{\theta},\tilde{m})$ is (uniformly) normally elliptic, see for instance  \cite[Definition~6.1.1]{PrSi16}.

For this, let $(\tilde \theta, \tilde m)\in C(\overline{\Omega})\times C(\overline{\Omega};\bR^3)$ be given.
The symbol
${S}A^4_\sharp(\tilde \theta,\tilde m)(x,\xi)$ of $A^4_\sharp(\tilde \theta,\tilde m)$ is given by
$$
{S}A^4_\sharp(\tilde \theta,\tilde m)(x,\xi)=\big(\alpha(\tilde{\theta}(x))I_3-\beta(\tilde{\theta}(x))\sM(\tilde m(x))\big)|\xi|^2, 
\quad (x,\xi) \in \overline\Omega\times \bR^3.
$$
For $|\xi|=1$, we obtain for the spectrum $\sigma$ (i.e., the eigenvalues) %of ${S}A^4_\sharp(\tilde \theta,\tilde m)(x,\xi)$
$$
\sigma({S}A^4_\sharp(\tilde \theta,\tilde{m})(x,\xi))=\{\alpha(\tilde{\theta}(x)),\ \alpha(\tilde{\theta}(x))\pm i\beta(\tilde{\theta}(x))|\tilde{m}(x)|\},
\quad (x,\xi) \in \overline\Omega\times \bS^2.
$$
Hence, for each  $(\tilde \theta, \tilde m)\in C(\overline{\Omega})\times C(\overline{\Omega};\bR^3)$ and $(x,\xi) \in \overline\Omega\times \bR^3$,
the spectrum of ${S}A^4_\sharp(\tilde \theta,\tilde m)(x,\xi)$ is contained in a sector $\Sigma_\vartheta =\{z\in \bC\setminus\{0\} : | \arg z | <\vartheta\}$   with opening angle $\vartheta$,
 satisfying
$$
\tan({\vartheta}) \le \frac{\max_{x\in\overline{\Omega}} |\beta (\tilde\theta(x))| |\tilde m(x)|} {\min_{x \in \overline{\Omega}}{\alpha(\tilde\theta}(x))}
\le \frac{\max_{x\in\overline{\Omega}} |\beta (\tilde\theta(x))| |\tilde m(x)|} {\underline \alpha}\le M,
$$
where $\underline\alpha>0$ is the constant introduced in \eqref{assumption} and $M\ge 0$ is an appropriate constant.   Hence $\vartheta<\pi/2$.

\medskip\noindent
Setting 
\begin{equation*}
% \label{def-C1}
[\cC^1(\tilde{m})m]_i=  \partial_i \tilde{m} \cdot \Delta m+   \nabla \tilde{m} : \partial_i \nabla  m ,   \quad i=1,2,3,
\end{equation*}
it follows that  
\begin{equation*}
\begin{aligned}
&\cC^1(m)m=\nabla\cdot (\nabla m \odot \nabla m), \quad &&m\in W^2_p(\Omega; \bR^3), \\
&\cC^1 (\tilde{m}) \in \cL(W^2_p(\Omega;\bR^3), L_p(\Omega;\bR^3)),\quad &&\tilde{m}\in C^1(\overline{\Omega};\bR^3).
\end{aligned}
\end{equation*}
For  $(\tilde{\theta},  \tilde{m})\in C(\overline{\Omega})\times C^2(\overline{\Omega};\bR^3)$
we set
\begin{equation*}
%\label{C3}
\begin{aligned}
\cC^3 (\tilde{\theta},\tilde{m})m:= 
-  \alpha(\tilde{\theta})(\Delta \tilde{m} + |\nabla \tilde{m}|^2 \tilde{m} )  \cdot (\Delta m +|\nabla \tilde m|^2 m).
\end{aligned}
\end{equation*}
One readily verifies that 
\begin{equation*}
%\label{H-mapping-property}
\cC^3(\tilde\theta, \tilde m) \in \cL(C^2(\overline{\Omega};\bR^3), C(\overline{\Omega}, \bR^3)).
\end{equation*}

\medskip
\noindent
We now introduce a functional analytic setting to study problem \eqref{eqn:Rewrit1}. For this, let
\begin{equation*}
  X_0:=L_{p, \sigma}(\Omega;\bR^3)\times L_p(\Omega;\bM^3)\times  L_p(\Omega;\bR)\times W^1_p(\Omega;\bR^3),\quad 1<p<\infty.
\end{equation*}
%%%%%%%%%%%%%%%%%%%%%%%%%%%%%%%%%%%%%%%%
%Helmholtz projection
%%%%%%%%%%%%%%%%%%%%%%%%%%%%%%%%%%%%%%%%
Here,
$
L_{p,\sigma}(\Omega;\bR^3) :=P_H (L_p(\Omega;\bR^3) )
$
is the space of all solenoidal vector fields in $L_p(\Omega; \bR^3)$, where
$P_H: L_p(\Omega;\bR^3) \to L_{p,\sigma}(\Omega;\bR^3) $   is the Helmholtz projection.
For all $s\geq 0$, we define
\begin{align*}
W^s_{p,\sigma}(\Omega;\bR^3) :& = W^s_p(\Omega;\bR^3)\cap L_{p,\sigma}(\Omega;\bR^3).
%B^s_{qp,\sigma}(\Omega;\bR^3) :& = B^s_{qp}(\Omega;\bR^3)\cap L_{q,\sigma}(\Omega;\bR^3).
\end{align*}
Moreover, we set
\begin{equation*} 
\label{differential operators domain}
\begin{aligned}
& X^1_1:=\{u\in W^2_{p,\sigma}(\Omega;\bR^3): \, u=0 \text{ on }\partial\Omega\},   \\
& X_1^2 :=\{F\in  W^2_p(\Omega;\bM^3):\,  F=0 \text{ on }\partial\Omega\}, \\
&  X_1^3:= \{\theta \in W^2_p(\Omega )\}, \\
& X_1^4:= \{m\in W^3_p(\Omega;\bR^3) : \, \partial_\nu m=0 \text{ on } \partial\Omega \}.\\
% X_1:= X_1^1\times X_1^2\times  X_1^3\times X_1^4.
\end{aligned}
\end{equation*}
For $X_1:= X_1^1\times X_1^2\times  X_1^3\times X_1^4$, we introduce the space of initial data as
\begin{equation*}
\begin{split}
  X_{\gamma,\mu}:&= (X_0,X_1)_{\mu-1/p,p}
  \end{split}
\end{equation*}
for some $\mu\in (1/p,1]$.
Observe that by \cite[Theorem 3.4]{Ama00}  and \cite[Theorem~4.3.3]{Tri78},   
\begin{equation*}
%\label{interpolation}
(u,F, \theta, m)\in X_{\gamma,\mu} \ \Leftrightarrow \
\left\{
\begin{aligned}
& u\in W^{2\mu-2/p}_{p, \sigma}(\Omega;\bR^3) &&\text{ and }\;\;   u=0\;\; \text{ on } \;\; \partial \Omega ,\\
& F\in W^{2\mu-2/p}_{p, \sigma}(\Omega;\bM^3)  &&\text{ and } \;\;  F=0\;\; \text{ on } \;\; \partial \Omega, \\
& \theta \in  W^{2\mu-2/p}_p(\Omega), && \\
& m\in  W^{1+2\mu-2/p}_{p}(\Omega;\bR^3)  && \text{ and }\;\;  \partial_\nu m=0  \;\; \text{ on }\;\;  \partial \Omega.
\end{aligned}
\right.
\end{equation*}
In the following, we assume that
 \begin{equation}
\label{indices cond}
p>5 \quad \text{and} \quad \mu>\frac{1}{2}+\frac{5}{2p},
\end{equation}
which ensures that the  embedding
\begin{equation}
\label{embedding-trace-space}
W^{j+2\mu-2/p}_p(\Omega)   \hookrightarrow  C^{j+1}(\overline{\Omega} ),\quad j=0,1,
\end{equation}
holds true.
Given any $\tilde{z}=(\tilde{u}, \tilde{F}, \tilde{\theta}, \tilde{m})\in X_{\gamma,\mu}$,
the operator 
\begin{equation}
\label{A-matrix}
  \sA(\tilde{z}):=
  \begin{bmatrix}
    A^1(\tilde{\theta}) & 0& 0 & P_H \cC^1(\tilde{m}) \\
0 & A^2 (\tilde{\theta}) & 0& 0\\
0 & 0  & A^3(\tilde{z})& \cC^3(\tilde{\theta}, \tilde{m})\\
0& 0&0 &  A^4(\tilde{\theta}, \tilde{m})
  \end{bmatrix}
  \end{equation}
satisfies $\sA(\tilde{z}) \in \cL(X_1,X_0)$.

%%%%%%%%%%%%%%%%%%%%%%%%%%%%%%%%%%%%%%%%
%boundary operator
%%%%%%%%%%%%%%%%%%%%%%%%%%%%%%%%%%%%%%%%
\medskip
\noindent
In addition, given $z=(u,F,\theta, m)$, we introduce the boundary operator $\sB(\tilde{z})$, defined by
\begin{equation}
\label{B-matrix}
\sB(\tilde{z})z= \nu\cdot {\rm tr}_{\partial\Omega} \left((K(\tilde{z})\nabla \theta) \right),
\end{equation}
where ${\rm tr}_{\partial\Omega}$ is the boundary trace operator.

Finally, we set
\begin{equation}
\label{definition-F}
\sF(z) = \left[ \begin{array}{l}
  P_H \left[\nabla \cdot (F F^\sT) - ( u\cdot\nabla u)\right]  \\[.5em] 
 (\nabla u)^\sT F - u\cdot \nabla F \\ [0.5em]
 \upmu(\theta) |\nabla u|^2 + \kappa(\theta) |\nabla F|^2  - u \cdot \nabla \theta  \\ [0.5em]
 -u \cdot \nabla m 
\end{array}\right].
\end{equation}
Using the notation introduced in \eqref{A-matrix}, \eqref{B-matrix}, and~\eqref{definition-F}
we can restate the nonlinear system \eqref{eqn:Rewrit1} 
in the condensed form
  \begin{equation}
   \label{nonlinear abstract equation}
  \left\{ 
   \begin{aligned}
      \pa_t z+\sA(z)z&=\sF (z)&&\text{in}&&\Omega, \\
      \sB (z)z &=0&&\text{on}&&\pa\Omega, \\
      z(0)&=z_0 &&\text{in}&&\Omega.
    \end{aligned}
    \right.
  \end{equation}
%%%%%%%%%%

\medskip
For notational brevity, given any $T\in (0,\infty]$, we define
\begin{align*}
& \bE_{0,\mu}(J_T):= L_{p,\mu}(J_T; X_0), \qquad \bE_{1,\mu}(J_T):= W^1_{p,\mu}(J_T; X_0)   \cap L_p(J_T; X_1), \\
& \bB_\mu(J_T)   :=    BU\!C(J_T;X_{\gamma,\mu}),  \\
&
\bF_\mu(J_T)       :=    W^{1/2-1/2p}_{p,\mu}(J_T;L_p(\partial\Omega))\cap L_{p,\mu}(J_T; W^{1-1/p}_p (\partial\Omega))
\end{align*}
and
$$
Y_{\gamma,\mu}= W^{2\mu-1-3/p}_p(\partial\Omega).
$$ 
%See, for instance, \cite[Theorem~4.5]{MeSc12} for {\color{red} more information concerning the properties of $\bF_\mu(J_T)$.}  
For future analysis, we also introduce the spaces with vanishing  trace   at $t=0$:
\begin{equation*}
\begin{aligned}
	\zbE_{1,\mu}(J_T)&:=\zbE_{1,\mu}^1(J_T)\times\zbE_{1,\mu}^2(J_T)\times\zbE_{1,\mu}^3(J_T)\times\zbE_{1,\mu}^4(J_T)\\
	&:=\{(z_1, z_2, z_3, z_4)\in \bE_1(J_T): \gamma_0 (z_1, z_2, z_3, z_4) =(0, 0,0, 0) \}, \\
	   {_0}\bB_\mu(J_T)  &:=  \{ z\in   \bB_\mu(J_T) : \gamma_0 z  = 0\},  \\
	 \zbF_\mu(J_T)& :=\{ g\in \bF_\mu(J_T): \gamma_0 g  = 0 \},
\end{aligned}
\end{equation*}
where $\gamma_0$ denotes the  trace operator at $t=0$.

%%%%%%%%%%%%%%
\begin{lemma}
\label{lem:B-mu}
Let $T\in (0, \infty]$. Then we have
\begin{enumerate}
\item[{\rm (a)}]  $\bE_{1,\mu}(J_T)\hookrightarrow  \bB_\mu(J_T)$. 
\vspace{1mm} \\
The embedding constant for the embedding ${_0}\bE_{1,\mu}(J_T)\hookrightarrow {_0}\bB_\mu(J_T)$
is independent of $T$.
\vspace{1mm} 
\item[{\rm (b)}]
  The trace operator $\gamma_0\in \cL(\bE_{1,\mu}(J_T), X_{\gamma,\mu})$ has a bounded right inverse 
$\gamma_0^c \in\cL(X_{\gamma,\mu}, \bE_{1,\mu}(J_T) ).$
\vspace{-3mm} 
\item[{\rm (c)}] For each $z_0\in X_{\gamma,\mu}$, there exists a function $z_*\in \bE_{1,\mu}(\bR_+)$ such that $z_*(0)=z_0$.
\vspace{1mm} 
\item[{\rm (d)}] 
$\bF_\mu(J_T) \hookrightarrow  BU\!C(J; Y_{\gamma,\mu}) \hookrightarrow  BU\!C(J\times \partial\Omega)$. 
Thus,  $\bF_\mu(J_T)$ is a multiplication algebra.
\end{enumerate}
\end{lemma}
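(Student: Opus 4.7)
The plan is to treat (a)-(c) as standard consequences of the trace theory for weighted Sobolev-Bochner spaces (see \cite{MeSc12, PrSi16}), applied componentwise to the product structure of $X_0$ and $X_1$, while (d) reduces to a trace/interpolation computation at the boundary together with the classical stability of Sobolev-Slobodeckij spaces under pointwise multiplication.

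For (a), the product decomposition of $\bE_{1,\mu}(J_T)$ reduces the claim to the componentwise trace embedding
$$
W^1_{p,\mu}(J_T;X_0^i)\cap L_p(J_T;X_1^i)\hookrightarrow BU\!C(J_T;(X_0^i,X_1^i)_{\mu-1/p,p}), \qquad i=1,\dots,4,
$$
which is the trace theorem for weighted vector-valued Sobolev spaces. The identification of each $(X_0^i,X_1^i)_{\mu-1/p,p}$ with the corresponding component of $X_{\gamma,\mu}$ is the real-interpolation characterization of Slobodeckij spaces with linear boundary conditions already recorded after the definition of $X_{\gamma,\mu}$. The $T$-independent constant on $\zbE_{1,\mu}(J_T)$ is obtained by composing with a $T$-independent extension operator $E:\zbE_{1,\mu}(J_T)\to \zbE_{1,\mu}(\bR_+)$ (constructed via even reflection and cutoff, as in \cite{MeSc12,PrSi16}) and then applying the embedding on the half-line.

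For (b) and (c), I would exhibit $\gamma_0^c$ via a semigroup representation. Fix auxiliary sectorial operators $\tilde A^i$ on $X_0^i$ with $D(\tilde A^i)=X_1^i$ that generate bounded analytic semigroups and possess maximal $L_p$-regularity (e.g., the Stokes operator with Dirichlet data for $i=1$, the Dirichlet Laplacian for $i=2$, and Neumann-type Laplacian realizations for $i=3,4$). Setting
$$
z_*(t)=\bigl(e^{-t\tilde A^1}u_0,\ e^{-t\tilde A^2}F_0,\ e^{-t\tilde A^3}\theta_0,\ e^{-t\tilde A^4}m_0\bigr),\qquad t\ge 0,
$$
the trace-method characterization of real interpolation gives $\|z_*\|_{\bE_{1,\mu}(\bR_+)}\le C\|z_0\|_{X_{\gamma,\mu}}$ together with $z_*(0)=z_0$. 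Restricting to $J_T$ yields (b); taking $T=\infty$ yields (c). The boundary conditions encoded in $X_{\gamma,\mu}$ are preserved since they are preserved by each generator.

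For (d), the trace theorem for fractional weighted Bochner spaces applied to $\bF_\mu(J_T)$ gives
$$
\bF_\mu(J_T)\hookrightarrow BU\!C\bigl(J_T;(L_p(\partial\Omega),W^{1-1/p}_p(\partial\Omega))_{\alpha,p}\bigr),\quad \alpha=1-\frac{1-\mu+1/p}{1/2-1/(2p)},
$$
provided $1/2-1/(2p)>1-\mu+1/p$, which follows from \eqref{indices cond}. A short computation yields $\alpha(1-1/p)=2\mu-1-3/p$, so the trace space equals $Y_{\gamma,\mu}=W^{2\mu-1-3/p}_p(\partial\Omega)$. Sobolev embedding on the two-dimensional manifold $\partial\Omega$ then gives $Y_{\gamma,\mu}\hookrightarrow BU\!C(\partial\Omega)$, since \eqref{indices cond} ensures $2\mu-1-3/p>2/p$. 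The multiplication-algebra property is obtained from the pointwise bound $|u(t)v(t)-u(\tau)v(\tau)|\le \|u\|_\infty|v(t)-v(\tau)|+\|v\|_\infty|u(t)-u(\tau)|$ inserted into the intrinsic seminorm \eqref{seminorm} (and its spatial analogue on $\partial\Omega$). I expect the main technical point to lie here: the weight $\tau^{p(1-\mu)}$ must distribute correctly across the two factors in the fractional temporal seminorm, but this becomes routine once the $L_\infty$ control from the first embedding is in hand.
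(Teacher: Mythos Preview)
Your approach is essentially the same as the paper's, and parts (a), (c), and (d) are fine. There is, however, a small technical gap in your construction for (b).

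For the third component you propose a ``Neumann-type Laplacian realization'' $\tilde A^3$ on $X_0^3=L_p(\Omega)$. The issue is that $X_1^3=W^2_p(\Omega)$ carries \emph{no} boundary condition, whereas the domain of the Neumann Laplacian on $L_p(\Omega)$ is $\{\theta\in W^2_p(\Omega):\partial_\nu\theta=0\}$, a strict subspace. Consequently, the trace-method characterization gives
\[
(L_p(\Omega),D(\tilde A^3))_{\mu-1/p,p}=\{\theta\in W^{2\mu-2/p}_p(\Omega):\partial_\nu\theta=0\},
\]
since under \eqref{indices cond} one has $2\mu-2/p>1+1/p$ and the Neumann trace persists through interpolation. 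Thus $t\mapsto e^{-t\tilde A^3}\theta_0$ belongs to $\bE^3_{1,\mu}(J_T)$ only for $\theta_0$ satisfying $\partial_\nu\theta_0=0$, and you do not obtain a right inverse on all of $X_{\gamma,\mu}^3=W^{2\mu-2/p}_p(\Omega)$. The paper circumvents this by extending $\theta_0$ to $\bR^3$, applying the full-space heat semigroup (whose domain is exactly $W^2_p(\bR^3)$, no boundary conditions), and restricting back:
\[
\theta_*(t)=\cR_\Omega\, e^{-t(I-\Delta)}\,\cE_\Omega\,\theta_0.
\]
This is an easy fix, but it is precisely the point where the asymmetry between $X_1^3$ and the other components matters.
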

%%%%%%%%%%%
\begin{proof}  
Although the properties listed above are known, for the reader's convenience, we include a proof nonetheless.

\medskip\noindent
(a)  By \cite[Lemma 2.5]{MeSc12}, there exists an extension operator $\cE_{J_T}\in \cL(\bE_{1,\mu}(J_T), \bE_{1,\mu}(\bR_+))$.
Moreover,  one shows that $\bE_{1,\mu}(\bR_+)$ is continuously translation invariant.
The first assertion follows now from  \cite[Proposition III.1.4.2]{Ama95}.

By Proposition~\ref{extension-zero}, there exists an extension operator $\cE^0_{J_T}\in \cL({_0}\bE_{1,\mu}(J_T), {_0}\bE_{1,\mu}(\bR_+))$
whose norm is independent of $T$. The second assertion follows then from the commutativity of the diagram
\begin{equation*}
\begin{aligned}
&{_0}\bE_{1,\mu}(J_T)\quad\overset{\cE^0_{J_T}}{\longrightarrow} &&{_0}\bE_{1,\mu}(\bR_+) \\
\phantom{X}&\quad\downarrow                       &&\quad\ \downarrow \\ 
&{_0}\bB_{\mu}(J_T)\qquad\overset{{\mathcal R}_{\!J}}{\longleftarrow} &&\ {_0}\bB_\mu(\bR_+),
\end{aligned}
\end{equation*} 
where ${\mathcal R}_{\!J}$ denotes the restriction operator.

\smallskip\noindent
(b) Pick any $\tilde z=(\tilde u, \tilde F, \tilde\theta, \tilde m)\in X_{\gamma,\mu}$. 
Then we define for $z_0\in(u_0,F_0,\theta_0,m_0)\in X_{\gamma, \mu}$
$$
(\gamma^c_0 z_0)(t)
:=\left( e^{-t(I-A^1(\tilde \theta))}u_0, e^{-t( I-A^2(\tilde \theta))}F_0, \cR_\Omega e^{-t(I-\Delta)}\cE_\Omega \theta_0, e^{-t(\omega I-A^4(\tilde \theta,\tilde m))}m_0 \right),
\quad t\in J,
$$
where $D(A^i(\tilde z)):=X^i_1$ for $i\in\{1,2,4\}$, with $\tilde z$ interpreted appropriately, and  $\omega$ is a sufficiently large positive constant, 
to be specified below. 

Moreover, $\cE_\Omega\in \cL(W^{1+2\mu-2/p}_p(\Omega),W^{1+2\mu-2/p}_p(\bR^3))$
denotes an appropriate extension operator, $\cR_\Omega$ is the restriction operator, and $\Delta$ is the Laplacian defined on $\bR^3$.
  
It follows from the maximal regularity result in Proposition~\ref{Pro: MR linear} below and \cite[Proposition 3.5.2(ii)]{PrSi16}  that, for each $\tilde z\in X_{\gamma,\mu}$, the operators
$\omega I - A^i(\tilde z)$,  $i\in\{1,2,4\}$, generate a strongly continuous exponentially stable analytic semigroup on $X^i_0$,  
where $\omega=1$ for $i\in\{1,2\}$, and $\omega$ is sufficiently large for  $i=4$.
The assertion follows then from \cite[Theorem 1.14.5]{Tri78}.

\smallskip\noindent
(c) This follows by choosing $T=\infty$ in (b) and setting $z_*= \gamma^c_0 z_0$.

\smallskip\noindent
(d)
The first assertion follows from  \cite[formula (4.10)]{MeSc12}, and the second one from \eqref{indices cond} and Sobolev embedding.
\end{proof}

%%%%%%%%%%%%%%%%%%%%%%%%%%%%
 \section{Linearized problems}
 \label{section:linear}
Before investigating the nonlinear system~\eqref{nonlinear abstract equation}, 
we first study some related linear problems.
We start with the system
  \begin{equation}
  \label{linear-tilde}
  \left\{
    \begin{aligned}
      \pa_t z+\sA (\tilde{z})z&={\sf f}(t)&&\text{in}&&\Omega, \\
      \sB (\tilde{z})z&={\sf g}(t)&&\text{on}&&\pa\Omega, \\
      z(0)&=z_0 &&\text{in}&&\Omega,
    \end{aligned}
    \right.
  \end{equation}
where  $\tilde z\in X_{\gamma,\mu}$.
%%%%%%%%%%%%%%%%%%%%%
\goodbreak
\begin{proposition}
\label{Pro: MR linear}
Assume \eqref{assumption} and \eqref{indices cond}. Let $\tilde{z}\in X_{\gamma,\mu}$ and  $T>0$ be given. 
\begin{itemize}
\item[{\rm (a)}]
For every $({\sf f},{\sf g},z_0) \in  \bE_{0,\mu}(J_T) \times \bF_\mu(J_T)\times X_{\gamma,\mu}$, 
where   ${\sf g} $ satisfies  the compatibility condition
$\sB(\tilde{z})z_0 = \gamma_0 {\sf g}  $,
the linear initial boundary value problem \eqref{linear-tilde} admits a unique solution $z\in \bE_{1,\mu}(J_T)$.
Moreover, 
$$
\sL(\tilde{z}):=(\sA(\tilde{z}), \sB(\tilde{z}), \gamma_0)\in \cL is(\bE_{1,\mu}(J_T),{\mathbb D}_\mu(\tilde z,T)),
$$
where
$
 {\mathbb D}_\mu(\tilde z,T):= \{({\sf f}, {\sf g} ,z_0)\in \bE_{0,\mu}(J_T)\times \bF_\mu(J_T)\times X_{\gamma,\mu}:  \sB(\tilde{z})z_0 = \gamma_0 {\sf g} \}.
$
\item[{\rm (b)}] 
For each $\tilde z\in X_{\gamma,\mu}$ and $({\sf f}, {\sf g})\in  \bE_{0,\mu}(J_T)\times {_0}\bF_\mu (J_T)$,
let 
$z=:{\sf S}(\tilde z) ({\sf f}, {\sf g})$ 
be the (unique) solution of \eqref{linear-tilde} with $z_0=0$.
Then
\begin{equation}
\label{S-Ck}
[\tilde z\mapsto {\sf S}(\tilde z)]\in C^1( X_{\gamma,\mu}, \cL(\bE_{0,\mu}(J_T)\times {_0}\bF_\mu (J_T), {_0}\bE_{1,\mu}(J_T)).
\end{equation}
Moreover, given any $T_*>0$,  the norm of ${\sf S}$ is uniform in  $T\in (0,T_*]$.
\end{itemize}
\end{proposition}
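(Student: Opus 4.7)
\emph{Proof plan.} The key structural observation is that the matrix operator $\sA(\tilde z)$ defined in~\eqref{A-matrix} is upper triangular in the block sense: the $m$-row is autonomous, the $F$-row is fully decoupled, and the $u$- and $\theta$-rows depend on $m$ only through the (after step~1) lower-order couplings $P_H\cC^1(\tilde m)m$ and $\cC^3(\tilde\theta,\tilde m)m$. The boundary operator $\sB(\tilde z)$ acts only on the $\theta$-component. The plan is therefore to solve the four subproblems in the order $m$, $F$, $\theta$, $u$, applying weighted maximal $L_p$-regularity to each subproblem separately and then assembling. The coefficient regularity needed at each step is delivered by~\eqref{embedding-trace-space}: $\tilde u,\tilde F,\tilde\theta\in C^1(\overline\Omega)$ and $\tilde m\in C^2(\overline\Omega)$.

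\textbf{Part (a).} For the $m$-equation, the principal part $\alpha(\tilde\theta)I_3-\beta(\tilde\theta)\sM(\tilde m)$ acting on $\Delta$ is normally elliptic by the computation in the excerpt, the Neumann condition $\pa_\nu m=0$ satisfies Lopatinskii-Shapiro, and the coefficients are continuous; weighted maximal $L_p$-regularity (via, e.g., \cite[Chapter 6]{PrSi16} combined with the weighted-in-time framework of \cite{MeSc12}) yields a unique $m\in\zbE_{1,\mu}^4(J_T)$ from the initial trace and right-hand side. For the $F$-equation one has a diagonal system of scalar parabolic equations with positive diffusion $\kappa(\tilde\theta)\in C^1(\overline\Omega)$ and Dirichlet BC, where standard weighted maximal regularity applies. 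For the $\theta$-equation, I absorb $\cC^3(\tilde\theta,\tilde m)m\in L_{p,\mu}(J_T;L_p)$ from step~1 into the right-hand side; the divergence-form operator $-\nabla\cdot(K(\tilde z)\nabla\cdot)$ with symmetric, uniformly positive-definite matrix $K(\tilde z)\in C^1(\overline\Omega)$ and conormal boundary operator satisfies Lopatinskii-Shapiro, so weighted maximal regularity applies under the stated compatibility condition. For the $u$-equation, I absorb $-P_H\cC^1(\tilde m)m\in L_{p,\mu}(J_T;L_{p,\sigma})$ and invoke weighted maximal $L_p$-regularity for the generalized Stokes operator $-P_H\nabla\cdot(\upmu(\tilde\theta)\nabla\cdot)$ with variable viscosity on the solenoidal space. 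The open mapping theorem then gives $\sL(\tilde z)\in\cL is(\bE_{1,\mu}(J_T),\mathbb{D}_\mu(\tilde z,T))$.

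\textbf{Part (b).} When $z_0=0$ and ${\sf g}\in\zbF_\mu(J_T)$ the compatibility condition $\sB(\tilde z)z_0=\gamma_0{\sf g}$ is trivially satisfied, so the target space decouples from $\tilde z$ and one can write ${\sf S}(\tilde z)=\sL(\tilde z)^{-1}\circ\iota$ with $\iota$ the canonical inclusion of $\bE_{0,\mu}(J_T)\times\zbF_\mu(J_T)$ into $\mathbb{D}_\mu(\tilde z,T)$. Since $\upmu,\kappa,\alpha,\beta,K\in C^5$ and $X_{\gamma,\mu}$ embeds continuously into a space on which the coefficient composition is $C^1$ by~\eqref{embedding-trace-space}, the assignment $\tilde z\mapsto\sL(\tilde z)$ is $C^1$ as a map into $\cL(\bE_{1,\mu}(J_T),\bE_{0,\mu}(J_T)\times\zbF_\mu(J_T)\times X_{\gamma,\mu})$. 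Operator inversion being smooth on the open set of isomorphisms, $\tilde z\mapsto\sL(\tilde z)^{-1}$ is $C^1$, yielding~\eqref{S-Ck}. For the $T$-uniformity on $(0,T_*]$, I would invoke the $T$-uniform extension operators of Proposition~\ref{extension-zero} applied to both $\zbE_{0,\mu}$ and $\zbF_\mu$: extend the data from $J_T$ to $J_{T_*}$, solve on $J_{T_*}$ where $\|{\sf S}(\tilde z)\|$ is a fixed constant $C(T_*)$, and restrict to $J_T$; since restriction is norm-decreasing, the resulting bound is independent of $T\in(0,T_*]$.

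\textbf{Main obstacle.} The principal technical task is verifying that the weighted-in-time maximal $L_p$-regularity machinery applies to each subproblem with the available coefficient regularity, in particular for the generalized Stokes system with spatially variable viscosity on the $C^3$-bounded domain $\Omega$, and for the $\theta$-subproblem with a variable-matrix conormal boundary condition where the $\tilde z$-dependent compatibility constraint must be tracked carefully in the isomorphism claim of part~(a). The $T$-uniformity in part~(b) depends essentially on having $T$-uniform zero-trace extension operators, which is precisely the content of Proposition~\ref{extension-zero} from Appendix~A.
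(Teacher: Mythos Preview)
Your proposal is correct and follows essentially the same approach as the paper: exploit the block upper-triangular structure of $\sA(\tilde z)$ to solve the $m$-equation first (via normal ellipticity and \cite[Theorem~6.3.3]{PrSi16}), feed the resulting coupling terms $\cC^1(\tilde m)m$, $\cC^3(\tilde\theta,\tilde m)m$ into the right-hand sides of the remaining three scalar/Stokes subproblems (handled by \cite[Theorems~6.3.2 and~7.3.2]{PrSi16}), then obtain~\eqref{S-Ck} from smoothness of operator inversion on fixed spaces and the $T$-uniform extension operators of Proposition~\ref{extension-zero}. One small cleanup: in part~(b) you should view $(\pa_t+\sA(\tilde z),\sB(\tilde z))$ as an isomorphism ${_0}\bE_{1,\mu}(J_T)\to\bE_{0,\mu}(J_T)\times{_0}\bF_\mu(J_T)$ between \emph{fixed} spaces (rather than composing $\sL(\tilde z)^{-1}$ with an inclusion into the $\tilde z$-dependent $\mathbb D_\mu(\tilde z,T)$), since this is what makes the smooth-inversion argument clean---but this is only a matter of phrasing, not substance.
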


%%%%%%%%%%%%%%%%%
 \begin{proof}
(a) The proof is based on the upper triangular structure of $\sA$ and the results in  \cite[Theorems~6.3.2, 6.3.3  and 7.3.2]{PrSi16}.
Let $$
{\sf f}=(f_1,f_2,f_3,f_4)\in \bE_{0,\mu}(J_T), \quad {\sf g} \in \bF_\mu(J_T),\quad
z_0\in X_{\gamma,\mu}
 $$
 be given.
We first solve the  equation for $m$:
  \begin{equation*}
  % \label{m eq linear}
    \begin{aligned}
      \pa_t m+A^4(\tilde{\theta}, \tilde{m})m&=f_4 &&\text{in}&&\Omega, \\
      \partial_\nu m&=0 &&\text{on}&&\pa\Omega, \\
      m(0)&=m_0 &&\text{in}&&\Omega
    \end{aligned}
  \end{equation*}
and obtain a unique solution 
$$
m\in L_{p,\mu}(J_T; W^3_p(\Omega;\bR^3) ) \cap W^1_{p,\mu}(J_T; W^1_p(\Omega;\bR^3) ) 
$$
by means of \cite[Theorem~6.3.3]{PrSi16}. In view of   \eqref{embedding-trace-space}, 
$$
(\cC^1(  \tilde{m}) m,0, \cC^3(\tilde{\theta}, \tilde{m}) m ,  0)     \in \bE_{0,\mu}(J_T).
$$
Therefore, the remaining equations of \eqref{linear-tilde} can be rewritten as 
  \begin{equation*}
  % \label{remain eq linear}
    \begin{aligned}
      \pa_t u+A^1(\tilde{\theta})u&=f_1 - \cC^1(  \tilde{m}) m &&\text{in}&&\Omega, \\
      u&=0 &&\text{on}&&\pa\Omega, \\
            \pa_t F+A^2(\tilde{\theta})F&=f_2 &&\text{in}&&\Omega, \\
     F&=0 &&\text{on}&&\pa\Omega, \\
           \pa_t \theta +A^3(\tilde{z})\theta&=f_3 - \cC^3(\tilde{\theta}, \tilde{m}) m &&\text{in}&&\Omega, \\
      \nu\cdot {\rm tr}_{\partial\Omega} (K(\tilde{z}) \nabla \theta) &={\sf g}  &&\text{on}&&\pa\Omega, \\
     (u(0), F(0), \theta(0))&=(u_0,F_0,\theta_0) &&\text{in}&&\Omega.
    \end{aligned}
   \end{equation*}
The existence and uniqueness of a solution then follows from \cite[Theorems~6.3.2  and 7.3.2]{PrSi16}.

\medskip\noindent
(b) 
Lemma~\ref{lem: Nemyskii} implies that 
$$[\tilde z \mapsto (\sA(\tilde z), \sB(\tilde z))] \in C^1 (X_{\gamma, \mu}, \cL({_0}\bE_{1,\mu}(J_T), \bE_{0,\mu}(J_T)\times {_0}\bF_\mu(J_T))).$$
The continuous differentiability of the map ${\sf S}$ follows from the following diagram:
\begin{equation*}
\begin{aligned}
  &\ \tilde z\quad\mapsto \qquad (\pa_t +\sA(\tilde z), \sB(\tilde z))\qquad \qquad\mapsto \qquad(\pa_t +\sA(\tilde z), \sB(\tilde z))^{-1}\\
& X_{\gamma,\mu} \to  \cL({_0}\bE_{1,\mu}(J_T), \bE_{0,\mu}(J_T)\times {_0}\bF_\mu(J_T)) \to 
 \cL(\bE_{0,\mu}(J_T)\times {_0}\bF_\mu(J_T), {_0}\bE_{1,\mu}(J_T) ).
\end{aligned}
\end{equation*}
Fix $T_*>0$.
For any $0<T\le T_*$, it follows from Proposition~\ref{extension-zero} that there exists an extension map $\cE_{J_T}: \bX(J_T)\to \bX(J_{T_*})$
with $\bX\in \{\bE_{0,\mu}, \zbE_{1,\mu}, \zbF_\mu\}$. 
Moreover, the norm of $\cE_{J_T}$ is  uniform in $T\in (0,T_*] $.
Given any $({\sf f}, {\sf g})\in \bE_{0,\mu}(J_T)\times \zbF_\mu(J_T)$, 
 let $(\hat{{\sf f}},\hat{{\sf g}})=(\cE_{J_T} {\sf f},\cE_{J_T} {\sf g}) $.
Part (a) implies that one can find a unique function $\hat{z}\in \zbE_{1,\mu}(J_{T_*})$ such that ${\hat{z}}= {\sf S} (\hat{{\sf f}},\hat{{\sf g}} )$. 
Put $z=\hat{z}|_{[0,T]}$.
It is clear that $\sL(\tilde{z}) z= ({\sf f},{\sf g},0)$.
Direct computations yield
\begin{align*}
\| z \|_{\bE_{1,\mu}(J_T)} \leq \| \hat {z}\|_{\bE_{1,\mu}(J_{T_*})}   
\leq   C  \left( \| \hat{{\sf f}}\|_{\bE_{0,\mu}(J_{T_*})} + \| \hat{{\sf g}}\|_{\zbF_\mu(J_{T_*})} \right) 
\leq  C  \left( \| {\sf f}\|_{\bE_{0,\mu}(J_T)} + \| {\sf g}\|_{\zbF_\mu(J_T)} \right)
\end{align*}
with a constant $C$ that is independent of $T$. 
Therefore, the norm of ${\sf S}(\tilde{z}) $ is uniform in $T\in (0,T_*]$.
\end{proof} 
%%%%%%%%%%%%

Next we consider the non-autonomous linear system
\begin{equation}
   \label{linear-time}
   \left\{
    \begin{aligned}
      \pa_t z +\sA(z_*(t))z&={\sf f}(t)&&\text{in}&&\Omega, \\
      \sB (z_*(t))z &={\sf g}(t)&&\text{on}&&\pa\Omega, \\
      z(0)&=z_0 &&\text{in}&&\Omega,
    \end{aligned}
    \right.
  \end{equation}
where  $z_*\in \bE_{1,\mu}(J_T)$ is given.
%%%%%%%%%%%%%%

\begin{proposition}
\label{pro: linear-time}
Assume \eqref{assumption}   and \eqref{indices cond}. 
Let $T>0$ and $z_*\in \bE_{1,\mu}(J_T)$ be given. Then the system \eqref{linear-time}
has a unique solution $z=S({\sf f},{\sf g},z_0)\in \bE_{1,\mu}(J_T)$
if and only if 
\begin{equation*}
({\sf f},{\sf g}, z_0)\in \bD_\mu(z_*,T):=\{\bE_{0,\mu}(J_T)\times \bF_\mu(J_T)\times X_{\gamma,\mu} : {\sf B}(z_*(0))z_0={\sf g}(0)\}.
\end{equation*}
    In this case, there is a constant $c_1=c_1(T)>0$ such that
    \begin{equation}\label{indep T}
    \| z \|_{\bE_{1,\mu}(J_T)}\le c_1(T) \big( \| {\sf f} \|_{\bE_{0,\mu}(J_T)} + \| {\sf g} \|_{\bF_\mu(J_T)} + \|z_0\|_{X_{\gamma,\mu}}\big).
    \end{equation}
Given any $T_*>0$, the constant $c_1(T)$ is uniform in $T\in (0,T_*] $ in case ${\sf g}\in {_0}\bF_\mu(J_T)$ and $z_0=0$.  
\end{proposition}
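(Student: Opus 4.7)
The plan is to reduce to the autonomous case handled by Proposition~\ref{Pro: MR linear} through a frozen-coefficient argument on a sufficiently fine time partition of $J_T$, combined with a Neumann-series perturbation step on each subinterval. Uniqueness and the necessity of the compatibility condition are immediate: any solution lies in $\bE_{1,\mu}(J_T)\hookrightarrow \bB_\mu(J_T)$ by Lemma~\ref{lem:B-mu}(a), so evaluating the boundary equation at $t=0$ yields $\sB(z_*(0))z_0={\sf g}(0)$, while the difference of two solutions will be handled by the construction below applied with zero data.

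To construct solutions, I first reduce to the case $z_0=0$, ${\sf g}\in {_0}\bF_\mu(J_T)$. Using the right inverse $\gamma_0^c$ of Lemma~\ref{lem:B-mu}(b), set $z_\sharp:=\gamma_0^c z_0\in \bE_{1,\mu}(J_T)$ and write $z=z_\sharp + w$. Then $w$ solves \eqref{linear-time} with data $({\sf f}-\pa_t z_\sharp - \sA(z_*)z_\sharp,\,{\sf g}-\sB(z_*)z_\sharp,\,0)$; the modified boundary datum vanishes at $t=0$ by compatibility and hence lies in ${_0}\bF_\mu(J_T)$, while the modified inhomogeneity lies in $\bE_{0,\mu}(J_T)$ thanks to the Nemitskii-type continuity of $\sA(\cdot)$ and $\sB(\cdot)$ supplied by Lemma~\ref{lem: Nemyskii}. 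Henceforth I assume $z_0=0$ and ${\sf g}\in {_0}\bF_\mu(J_T)$.

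Since $z_*\in BU\!C(J_T;X_{\gamma,\mu})$ by Lemma~\ref{lem:B-mu}(a), I choose a partition $0=t_0<t_1<\dots<t_N=T$ on which the oscillation of $z_*$ in $X_{\gamma,\mu}$ stays below a threshold $\delta>0$ to be selected. On $(t_{k-1},t_k)$ the problem is rewritten in frozen form
\begin{equation*}
\pa_t w+\sA(z_*(t_{k-1}))w = {\sf f} + \cP_k(t)w,\qquad \sB(z_*(t_{k-1}))w = {\sf g} + \cQ_k(t)w,
\end{equation*}
with $\cP_k(t):=\sA(z_*(t_{k-1}))-\sA(z_*(t))$ and $\cQ_k$ defined analogously. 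The $C^1$-dependence of $\sA,\sB$ on their argument (Lemma~\ref{lem: Nemyskii}) bounds the operator norms of $\cP_k(t)$ and $\cQ_k(t)$ by $L\delta$. Applying ${\sf S}(z_*(t_{k-1}))$ from Proposition~\ref{Pro: MR linear}(b), read the equation as the fixed-point identity $w={\sf S}(z_*(t_{k-1}))({\sf f}+\cP_k w,\,{\sf g}+\cQ_k w)$; choosing $\delta$ so small that $L\delta\,\sup_k\|{\sf S}(z_*(t_{k-1}))\|\le 1/2$, which is feasible thanks to the uniform-in-$T$ bound of Proposition~\ref{Pro: MR linear}(b), the Neumann series converges and delivers $w|_{J_{t_1}}\in {_0}\bE_{1,\mu}(J_{t_1})$ with the required estimate. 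For $k\ge 2$, the value $w(t_{k-1})\in X_{\gamma,\mu}$ is compatible with ${\sf g}(t_{k-1})$ by construction, so the initial-data reduction of the previous paragraph applies on the shifted interval; iterating over the $N$ subintervals produces $w\in \bE_{1,\mu}(J_T)$ and an estimate of the form \eqref{indep T}.

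The uniformity claim follows at once: when $z_0=0$ and ${\sf g}\in {_0}\bF_\mu(J_T)$, no initial-data reduction is needed, the partition may be chosen from the modulus of continuity of $z_*$ on $[0,T_*]$ so that $N$ is independent of $T\in (0,T_*]$, and Proposition~\ref{Pro: MR linear}(b) furnishes a $T$-uniform bound on each $\|{\sf S}(z_*(t_{k-1}))\|$. The main technical obstacle I expect is controlling the norms of $\cP_k,\cQ_k$ as operators between the weighted-in-time spaces $\zbE_{1,\mu}$ and $\bE_{0,\mu}\times\zbF_\mu$ uniformly in the length of each subinterval, and ensuring that the extensions and restrictions used when patching across subintervals do not cause those bounds to degrade with $T$---both of which rest on the uniform-in-$T$ structure already established in Proposition~\ref{Pro: MR linear}(b).
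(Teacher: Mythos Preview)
Your approach is essentially the paper's: freeze coefficients on a fine partition and close via a Neumann series on each subinterval. The structural difference---you reduce to $z_0=0$, ${\sf g}\in{_0}\bF_\mu$ first via $\gamma_0^c$, while the paper solves the frozen problem on $I_0$ with the given data and then corrects---is harmless.

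The obstacle you flag at the end is the genuine crux, and your claim that the $C^1$-dependence of $\sB$ bounds the operator norm of $\mathcal{Q}_k$ by $L\delta$ does not go through as stated. Smallness of $\|z_*(t)-z_*(t_{k-1})\|_{X_{\gamma,\mu}}$ does control $\cP_k$ as a map into $\bE_{0,\mu}$, but $\bF_\mu$ carries the time-fractional seminorm in $W^{1/2-1/2p}_{p,\mu}$, and a pointwise-in-$t$ operator bound on $\mathcal{Q}_k(t)$ says nothing about that seminorm. The paper's resolution is to refine the partition so that, in addition to the $X_{\gamma,\mu}$-oscillation, one also has
\[
\big\|{\rm tr}_{\partial\Omega}\big(K(z_*(\cdot))-K(z_*(t_{k-1}))\big)\big\|_{\bF_\nu(I_k)}\le \eta
\]
(with $\nu=\mu$ on the first subinterval and $\nu=1$ thereafter), which follows from absolute continuity of this norm as the subinterval shrinks; Lemma~\ref{lem: multiplication}(i) then converts this into the needed smallness of $\mathcal{Q}_k:\zbE_{1,\nu}(I_k)\to\zbF_\nu(I_k)$. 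For the uniformity claim, note that $z_*$ is only defined on $[0,T]$, so you cannot invoke its modulus of continuity on $[0,T_*]$; the paper proceeds as in Proposition~\ref{Pro: MR linear}(b), extending $({\sf f},{\sf g})$ to $[0,T_*]$ via the $T$-independent operators of Proposition~\ref{extension-zero}, solving there, and restricting.
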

%%%%%%%%%%%
\begin{proof}
Let $z_*\in \bE_{1,\mu}(J_T)$ be given. In the following, we use the notation
\begin{equation*}
\sA_*(t):= \sA(z_*(t)),\quad \sB_*(t):= \sB(z_*(t)),\quad {\sf S}_*(t):= {\sf S}(z_*(t)), \quad K_*(t):=K(z_*(t))
\end{equation*}
for $t\in [0,T]$, 
where the solution operator ${\sf S}$ is defined in Proposition~\ref{Pro: MR linear}.
By  Lemma~\ref{lem:B-mu}(a) we know that $z_*\in C([0,T]; X_{\gamma,\mu})$ and therefore, the set  
$\{z_*(s) : s\in [0,T]\}\subset X_{\gamma,\mu}$
is compact. We conclude from \eqref{S-Ck} that there exists a constant $M=M(T)>0$ such that  for $\nu\in \{\mu,1\}$
\begin{equation}
\label{S(t)-bounded}
\| {\sf S}_*(s)\|_{\cL(\bE_{0,\nu}(J_T)\times {_0}\bF_\nu (J_T), {_0}\bE_{1,\nu}(J_T)) }\le M,\quad s\in [0,T].
\end{equation}
By Lemma~\ref{lem: Nemyskii} 
and absolute continuity of the integral $\|{\rm tr}_{\partial\Omega} K\|_{\bF_\mu (J_T)}$,
we can find a partition
$$0=t_0<t_1 \cdots <t_n =T\quad\text{of} \quad [0,T]$$
such that
\begin{equation}
\label{z-small}
\begin{aligned}
 \max_{t\in I_j}\| z_*(t)-z_*(t_j)\|_{X_{\gamma,\mu}}& \le \eta, \\
 \|{\rm tr}_{\partial\Omega} \left(K_*(\cdot)-K_*(t_j) \right)\|_{\bF_\nu (I_j)}   &\le \eta 
\end{aligned}
\end{equation}
for a  pre-determined  (fixed) number $\eta$, 
where $I_j=[t_j, t_{j+1}]$ for $j=0,\ldots, n-1$, and where we set 
\begin{equation*}
%\label{def-nu}
\nu:= \left\{
\begin{aligned} 
& \mu &&\text{if} \quad j=0, \\ 
 &1     && \text{if} \quad j=1,\ldots, n-1.
\end{aligned}
\right.
\end{equation*}
Now it follows from \eqref{z-small}, and Lemma  \ref{lem: multiplication}(i) that 
\begin{equation}
\label{AB-small}
\begin{aligned}
\| \sA_*(\cdot )-\sA_*(t_j)\|_{\cL(_{0}\bE_{1,\nu}(I_j), \bE_{0,\nu}(I_j))}& \le 1/(4M), \\ 
\| \sB_*(\cdot )-\sB_*(t_j)\|_{\cL(_{0}\bE_{1,\nu}(I_j), _{0}\bF_{\nu}(I_j))}& \le 1/(4M)
\end{aligned}
\end{equation}
for $j=0,\ldots,n-1.$
In fact, we will first choose a partition point $t_1$ so that the properties of \eqref{AB-small} hold true for $I_0=[0,t_1]$, 
and then partition the remaining interval $[t_1,T]$ if needed.

\medskip
We will consider problem~\eqref{linear-time} on subintervals $I_j$.
In the first step, we deal with the interval $I_0=[t_0, t_1]=[0,t_1]$.
In order to resolve the compatibility condition $\sB_*(0)z_0={\sf g}(0)$, we consider the linear problem
\begin{equation}
   \label{linear-w}
   \left\{
    \begin{aligned}
      \pa_t w +\sA_*(0)w&={\sf f}(t)&&\text{in}&&[0,t_1]\times \Omega, \\
      \sB_* (0)w &={\sf g}(t)&&\text{on}&&[0,t_1]\times\pa\Omega, \\
      w(0)&=z_0 &&\text{in}&&\Omega.
    \end{aligned}
    \right.
\end{equation}
Let $w\in \bE_{1,\mu}([0,t_1])$ be the unique solution of  \eqref{linear-w} (whose existence is guaranteed by Proposition~\ref{Pro: MR linear}) and consider the system
\begin{equation}
   \label{problem-1}
   \left\{
    \begin{aligned}
      \pa_t \hat z +\sA_*(t)\hat z & =\hat{\sf f}(t)&&\text{in}&&[0,t_1]\times \Omega, \\
                          \sB_*(t)\hat z  & =\hat{\sf g}(t)&&\text{on}&&[0,t_1]\times\pa\Omega, \\
     \hat z(0)&=0 &&\text{in}&&\Omega,
    \end{aligned}
    \right.
\end{equation}
where
\begin{equation*}
\begin{aligned}
 \hat{\sf f}(t) &=-[\sA_*(t)-\sA_*(0)]w(t), \\
 \hat{\sf g}(t) &=- [\sB_*(t)-\sB_*(0)]w(t).
\end{aligned}
\end{equation*}
Suppose $\hat z\in \bE_{1,\mu}([0,t_1])$ is a solution of \eqref{problem-1}.
Then one verifies that the function
$z_1=w+\hat z \in\bE_{1,\mu}([0,t_1])$
is a solution of~\eqref{linear-time} on the interval $[0,t_1]$.

Hence, it remains to show that~\eqref{problem-1} has a (unique) solution.
For this, we first note that the necessary compatibility condition ${\sf B}_*(0)\hat z(0)=\hat{\sf g}(0)$  is satisfied. 
To show the solvability, we rewrite~\eqref{problem-1} as 
\begin{equation}
   \label{problem-1-altered}
   \left\{
    \begin{aligned}
      \pa_t \hat z +\sA_*(0)\hat z + {\sf R}_1(t) \hat z& =\hat{\sf f}(t)&&\text{in}&&[0,t_1]\times\Omega, \\
                          \sB_*(0)\hat z + {\sf R}_2(t) \hat z & =\hat{\sf g}(t)&&\text{on}&&[0,t_1]\times\pa\Omega, \\
     \hat z(0)&=0 &&\text{in}&&\Omega,
    \end{aligned}
    \right.
\end{equation}
where
\begin{equation*}
{\sf R}_1(t)=  [\sA_*(t)-\sA_*(0)],\quad  {\sf R}_2(t) =[\sB_*(t)-\sB_*(0)],\quad t\in [0,t_1].
\end{equation*}
It follows from \eqref{S(t)-bounded} and \eqref{AB-small} that
$$
\|({\sf R}_1(\cdot), {\sf R}_2(\cdot )){\sf S}_*(0)\|_{\cL(\bE_{0,\mu}([0,t_1])\times {_0}\bF_{\mu}([0,t_1])) }\le 1/2,
$$
so that $\big[   I+ ({\sf R}_1(\cdot), {\sf R}_2(\cdot )){\sf S}_*(0)   \big]\in \cL(\bE_{0,\mu}([0,t_1])\times {_0}\bF_{\mu}([0,t_1]))$ is invertible.
Hence,
\begin{equation*}
\hat z= {\sf S}_*(0) \big[  I+ ({\sf R}_1(\cdot), {\sf R}_2(\cdot)){\sf S}_*(0)     \big]^{-1}(\hat {\sf f}, \hat {\sf g})\in {_0\bE}_{1,\mu}([0,t_1])
\end{equation*}
is the (unique) solution of \eqref{problem-1-altered} on the interval $[0,t_1]$.
It follows that $z_1:= \hat z +w\in \bE_{1,\mu}([0,t_1])$ is a solution of~\eqref{linear-time} 
on the time interval $[0,t_1]$. 

Assume that there exists another solution $\widetilde{z}\in \bE_{1,\mu}([0,t_1])$ to \eqref{linear-time}  on $[0,t_1]$.
Then $h =z -\widetilde{z} $ solves  \eqref{problem-1-altered} with $\hat{\sf f}=0$ and $\hat{\sf g}=0$. 
The unique solvability of \eqref{problem-1-altered} implies that $h=0$.
This proves the uniqueness of a solution on $[0,t_1]$.

We can now repeat the steps above for the interval $[t_1,t_2]$. In this case, we consider the problem
\begin{equation}
   \label{linear-time-2}
   \left\{
    \begin{aligned}
      \pa_t z +\sA_*(t_1+t)z&={\sf f}(t_1+t)&&\text{in}&&[0,t_2-t_1]\times \Omega, \\
      \sB_* (t_1+t)z &={\sf g}(t_1+t)&&\text{on}&&[0,t_2-t_1]\times\pa\Omega, \\
      z(0)&=z_1(t_1) &&\text{in}&&\Omega,
    \end{aligned}
    \right.
\end{equation}
where $z_1$ is the function obtained in step 1.
As $z_1$ solves~\eqref{linear-time} on the time interval $[0,t_1]$, the compatibility condition
$\sB_*(t_1)z(0)=\sB_*(t_1)z_1(t_1)={\sf g}(t_1)$
is satisfied.
Repeating the arguments of step 1, we obtain a unique solution 
$z_2\in \bE_{1,1}([0,t_2-t_1])$ of~\eqref{linear-time-2}.
Let
\begin{equation*}
z(t):=
\left\{
\begin{aligned}
&z_1(t), && 0\le t\le t_1\\
&z_2(t-t_1), && t_1\le t\le t_2.
\end{aligned}
\right.
\end{equation*}
 As $z_1(t_1)=z_2(0)$ we conclude that $z\in W^1_1((0, t_2);X_0)$. It is then easy to see that $z\in \bE_{1,\mu}([0,t_2])$.

\medskip
 We can now repeat the steps above to find a solution $z\in \bE_{1,\mu}(J_T)$ of \eqref{linear-time} on $[0,T]$.
To show uniqueness, let
$$
t_*:=\sup\{ t\in [0,T]: \text{\eqref{linear-time} has a unique solution on $[0,t]$}\}. $$
By step 1, the set under consideration is non-empty and, therefore, $t_*$ is well-defined.
Suppose $t_*<T$.  We can then repeat step 2 from above to get a unique solution on $[t_*, t_* +\delta]$ for some $\delta$.
This leads to a contradiction.

Given  $T_*>0$, when $	z_0=0$ and ${\sf g}\in \zbF_\mu(J_T)$,  the uniformity of the constant  $C $ in \eqref{indep T} with respect to $T\in (0,T_*]$  can be shown in an analogous way to Proposition~\ref{Pro: MR linear}(b).   
\end{proof}

For simplicity, we will introduce the following notation
\begin{equation}
\label{AB}
\cA(z):= \sA(z)z  \quad \text{and} \quad \cB(z):=\sB(z)z.
\end{equation}
%%%%%%%%%%%%%%%
It follows from Proposition~\ref{Prop: mapping properties} that
\begin{equation*}
\begin{aligned}
\cA & \in C^1(\bE_{1,\mu}(J_T),  \bE_{0,\mu}(J_T)),                     \quad &&  \cA^\prime (z_*)z=  \sA(z_*)z + [\sA^\prime(z_*)z] z_*, \\
\cB & \in C^1(\bE_{1,\mu}(J_T),  \bF_\mu(J_T)) ,                           \quad &&  \cB^\prime (z_*)z= \sB(z_*)z + [\sB^\prime(z_*)z] z_* .
\end{aligned}
\end{equation*}
\noindent
Next, we 
will study solvability of the linearized system   
\begin{equation}
   \label{linearized}
   \left\{
    \begin{aligned}
      \pa_t z   + \cA'(z_*(t))z - \sF^\prime(z_*(t))z&={\sf f}(t)&&\text{in}&&\Omega, \\
                    \cB'(z_*(t))z  &={\sf g}(t)&&\text{on}&&\pa\Omega, \\
      z(0)&=z_0 &&\text{in}&&\Omega,
    \end{aligned}
    \right.
  \end{equation}
where $z_*\in \bE_{1,\mu}(J_T)$.    We obtain the following result.
%%%%%%%%%
\goodbreak
\begin{proposition}\label{Prop: linearized abstract pb}
Let $z_*\in\bE_{1,\mu}(J_T)$ be given. Then the linearized system \eqref{linearized}
  has a unique solution $z=\cS({\sf f},{\sf g},z_0)\in \bE_{1,\mu}(J_T)$
if and only if 
\begin{equation*}
\begin{split}
({\sf f},{\sf g}, z_0) \in \widetilde{\bD}_\mu(z_*,T):&=\{\bE_{0,\mu}(J_T)\times \bF_\mu(J_T)\times X_{\gamma,\mu} : 
  \cB(z_*(0))z_0  ={\sf g}(0) \}.
\end{split}
\end{equation*}
In this case, there is a constant $c_2=c_2(T)>0$ such that
    \begin{equation*}
    \| z \|_{\bE_{1,\mu}(J_T)}\le c_2(T) \big( \| {\sf f} \|_{\bE_{0,\mu}(J_T)} + \| {\sf g} \|_{\bF_\mu(J_T)} + \|z_0\|_{X_{\gamma,\mu}}\big).
    \end{equation*}
   Given any $T_*>0$, the constant $c_2$ is independent of $T\in (0,T_*]$ in case ${\sf g}\in {_0}\bF_\mu(J_T)$ and $z_0=0$.\\
\end{proposition}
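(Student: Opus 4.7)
The plan is to recognize \eqref{linearized} as a lower-order perturbation of the variable-coefficient problem \eqref{linear-time} treated in Proposition~\ref{pro: linear-time}, and then to solve it by a Neumann-series argument on small subintervals, mirroring the strategy of that earlier proof. Using the product-rule formulas
$\cA'(z_*)z = \sA(z_*)z + [\sA'(z_*)z]z_*$ and $\cB'(z_*)z = \sB(z_*)z + [\sB'(z_*)z]z_*$, I would rewrite \eqref{linearized} in the equivalent form
\begin{equation*}
\left\{
\begin{aligned}
\pa_t z + \sA(z_*(t))z &= {\sf f}(t) + {\sf R}_1(t)z &&\text{in } \Omega,\\
\sB(z_*(t))z &= {\sf g}(t) + {\sf R}_2(t)z &&\text{on } \pa\Omega,\\
z(0) &= z_0 &&\text{in } \Omega,
\end{aligned}
\right.
\end{equation*}
where ${\sf R}_1(t)z := \sF'(z_*(t))z - [\sA'(z_*(t))z]z_*(t)$ and ${\sf R}_2(t)z := -[\sB'(z_*(t))z]z_*(t)$. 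The compatibility $\cB'(z_*(0))z_0 = {\sf g}(0)$ stated in the proposition is exactly the compatibility condition $\sB(z_*(0))z_0 = {\sf g}(0) + {\sf R}_2(0)z_0$ required by Proposition~\ref{pro: linear-time} for the modified right-hand side.

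Next I would verify, using the mapping properties gathered in Proposition~\ref{Prop: mapping properties} together with the multiplication estimates of Lemma~\ref{lem: multiplication} and the embedding \eqref{embedding-trace-space}, that
\begin{equation*}
{\sf R}_1 \in \cL(\bE_{1,\mu}(J_T),\, \bE_{0,\mu}(J_T)), \qquad {\sf R}_2 \in \cL(\bE_{1,\mu}(J_T),\, \bF_\mu(J_T)).
\end{equation*}
Applying the solution operator from Proposition~\ref{pro: linear-time} then converts the problem into the fixed-point identity $z = S({\sf f} + {\sf R}_1 z,\, {\sf g} + {\sf R}_2 z,\, z_0)$ in $\bE_{1,\mu}(J_T)$. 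Following the proof of Proposition~\ref{pro: linear-time}, I would partition $[0,T]$ into finitely many subintervals $I_j = [t_j,t_{j+1}]$ and, on each one, first absorb the initial value $z(t_j)$ by the auxiliary solution of the frozen problem with coefficients $\sA(z_*(t_j)), \sB(z_*(t_j))$, so that the residual $\hat z$ has vanishing trace at $t_j$. The residual then solves a perturbation problem driven by $({\sf R}_1 \hat z, {\sf R}_2 \hat z)$ on ${_0}\bE_{1,\mu}(I_j)$, and choosing the partition fine enough to guarantee that the norm of $({\sf R}_1,{\sf R}_2)\,{\sf S}_*(t_j)$ is at most $1/2$ reduces the problem to a convergent Neumann series, yielding existence, uniqueness, and the quantitative bound on each subinterval. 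Concatenation produces $z \in \bE_{1,\mu}(J_T)$.

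The main obstacle is the smallness of $({\sf R}_1, {\sf R}_2)$ on short subintervals when restricted to functions with vanishing trace at the left endpoint. This is where the continuity of $t \mapsto z_*(t) \in X_{\gamma,\mu}$ from Lemma~\ref{lem:B-mu}(a), the absolute continuity of the temporal-weighted integrals defining $\bF_\mu$, and the explicit product structure of $\sA'$, $\sB'$, $\sF'$ come together to yield, in direct analogy to \eqref{z-small}--\eqref{AB-small} in the proof of Proposition~\ref{pro: linear-time}, the required smallness. Uniqueness is a consequence of the same Neumann-series argument (the homogeneous equation has only the trivial solution on each subinterval), while the estimate with constant $c_2(T)$ is the composition of the norm bound for $S$ with the geometric sum of the perturbation. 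Finally, the asserted uniformity of $c_2$ in $T \in (0, T_*]$ when $z_0 = 0$ and ${\sf g} \in {_0}\bF_\mu(J_T)$ follows from the uniform bound in Proposition~\ref{pro: linear-time}(b) combined with the extension operators of Proposition~\ref{extension-zero}, exactly as in the proof of Proposition~\ref{Pro: MR linear}(b).
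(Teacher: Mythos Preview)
Your proposal is correct and follows essentially the same route as the paper: both recognize \eqref{linearized} as the variable-coefficient problem of Proposition~\ref{pro: linear-time} perturbed by the terms $[\sA'(z_*)z]z_*$, $[\sB'(z_*)z]z_*$, and $\sF'(z_*)z$, and both obtain smallness of these perturbations on short subintervals via absolute continuity of $\int \|t^{1-\nu}z_*(t)\|_{X_1}^p\,dt$ and of the $\bF_\mu$-norms (using Lemma~\ref{lem: multiplication}(ii) for the boundary term), before invoking the partition/Neumann-series machinery already set up for Proposition~\ref{pro: linear-time}. The paper's write-up is terser than yours, but the logic is the same.
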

%%%%%%%%%%%
\begin{proof}
We observe that for $z\in {_0}\bE_{1,\nu}(I)$  and any interval $I$ contained in $[0,T]$,
\begin{equation*}
\begin{aligned}
 \left(\int_I \|t^{1-\nu} [\sA^\prime(z_*(t))z(t)] z_*(t) \|^p_{X_0}\,dt\right)^{1/p} 
& = \left(\int_I \| [\sA^\prime(z_*(t))z(t)] t^{1-\nu}z_*(t) \|^p_{X_0}\,dt\right)^{1/p} \\
& \le  M\left(\int_I \| t^{1-\nu} z_*(t) \|^p_{X_1}\,dt\right)^{1/p} \sup_{t\in I} \|z(t)\|_{ X_{\gamma,\mu}  }  \\
&  \le  CM \left(\int_I \| t^{1-\nu} z_*(t) \|^p_{X_1}\,dt\right)^{1/p} \|z \|_{{_0}\bE_{1,\mu}(I)},
\end{aligned}
\end{equation*}
where the constants $C$ and $M$ do not depend on the length of $I$, due to Lemma~\ref{lem:B-mu}(a).
Here $\nu=\mu$ in case the interval $I$ contains $0$, and $\nu=1$ otherwise.
By absolute continuity of the integral 
$
\int_{[0,T]} \| t^{1-\nu}  z_*(t)\|^p_{X_1}\,dt,
$
we can, again, chose a partition $\{I_j: 0\le j\le n-1\}$ of $[0,T]$ such that
\begin{equation*}
\begin{aligned}
 \left(\int_{I_j}\|t^{1-\nu} z_*(t) \|^p_{X_1}\,dt\right)^{1/p}  \le \eta, 
\end{aligned}
\end{equation*}
where  $\eta>0$ is a given, predetermined (small) number. 

Let $z_*=(u_*, F_*, \theta_*, m_*)\in \bE_{1,\mu}(J_T)$ be given. Then
$
  [\sB^\prime (z_* )z]z_*=  {\tr}_{\partial\Omega} [ K'(z_* )z]\nabla \theta_* .
$
It follows from Lemma~\ref{lem: multiplication}(ii)  that for any  $z=(z_i)_{i=1}^{16} \in {_0}\bE_{1,\nu}(I)$
\begin{align*}
\|  {\rm tr}_{\partial\Omega} ([ K' (z_* ) z ] \nabla \theta_*) \|_{ \zbF_\nu (I) }
 \leq C \sum_{i=1}^{16}  \| {\rm tr}_{\partial\Omega} (\partial_{i} K (z_* ) \nabla \theta_* ) \|_{  {_0}\bF_\nu (I )}   \|{\rm tr}_{\partial\Omega} z_i\|_{\zbF_{1,\nu}(I)},
\end{align*} 
where the constant $C$ is independent of the length of the interval $I\subset [0,T]$. 
 By absolute continuity,  for any given $\eta>0$, there exists a partition $\{I_j: 0\le j\le n-1\}$ of $[0,T]$ (which can be chosen to be compatible with the one 
 for $\sA^\prime$)  such that

\begin{equation*}
\| {\rm tr}_{\partial\Omega} ([ K' (z_* ) z ] \nabla \theta_* )\|_{{_0}\bF_{\nu}(I_j)}\le  \eta \, \| z \|_{ \zbE_{1,\nu}(I_j)}, \quad 0\le j\le n-1 ,
\end{equation*}
for some constant $C$ that is independent of the length of the interval $I_j$. 
See  Proposition~\ref{extension-zero} and \cite[Theorems~4.2 and~4.5]{MeSc12}.
Finally, we note that the term $\sF^\prime(z_*(t))z$  is of lower order in $z$ and can therefore be handled by a standard perturbation argument.
The assertion follows now by similar arguments as in the proof of Proposition~\ref{pro: linear-time}.
\end{proof}
\goodbreak
%%%%%%%%%%%%%%%%
\section{Local well-posedness}\label{section:localwell}
%%%%%%%%%%%%%%%

\begin{theorem}[Local existence and uniqueness for the abstract problem]
\label{Thm:wellposed abstract}
Assume \eqref{assumption} and \eqref{indices cond}.  
\begin{itemize}
\item[{\rm (a)}]
Let 
$ 
\cM_\mu= \{z\in X_{\gamma,\mu}: \sB(z)z=0 \}.
$ 
Then for every  $z_0\in \cM_\mu$, there exists $T>0$ such that the nonlinear system~\eqref{nonlinear abstract equation} has a unique solution
$z\in\bE_{1,\mu}(J_T)$.
The solution can be continued to a maximal solution $z=z(\cdot,z_0)$ on an interval $[0, T_+(z_0))$.
\vspace{1mm}
\item[{\rm (b)}]
Let $T<T_+(z_0)$. Then there exists a number $\rho>0$ such that the system 
  \eqref{nonlinear abstract equation} has a unique solution  $z(\cdot,w_0)\in \bE_{1,\mu}(J_T)$
  for  each initial value $w_0\in \cM_\mu\cap B_{X_{\gamma,\mu}}(z_0, \rho)$.
Moreover,
the mapping
\begin{equation*}
[w_0\mapsto z(\cdot ,w_0)]:  \cM_\mu \cap B_{X_{\gamma,\mu}}(z_0, \rho)\to \bE_{1,\mu}(J_T) 
\end{equation*}
is Lipschitz continuous. \\
Hence,  \eqref{nonlinear abstract equation} generates a Lipschitz continuous semiflow on $\cM_\mu$.
\vspace{1mm}
\item[{\rm (c)}]
 Let $T<T_+(z_0)$ and   $z=z(\cdot,z_0)$ be the (unique) solution of~\eqref{nonlinear abstract equation}. Then
 $$ tz \in W^2_{p,\mu}(J_T; X_0)\cap W^1_{p,\mu}(J_T; X_1).$$
 Moreover, $z\in C^1((0,T]; X_{\gamma,\mu})$.
\end{itemize}
\end{theorem}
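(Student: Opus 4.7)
The strategy is a contraction-mapping argument in weighted maximal regularity spaces: reduce to zero initial data by subtracting a reference extension, linearize at the reference, and invert via Proposition~\ref{Prop: linearized abstract pb}. Lipschitz dependence in (b) follows from the uniform bounds already tracked in Propositions~\ref{Pro: MR linear}(b) and~\ref{Prop: linearized abstract pb}, while the additional regularity in (c) is extracted by a time-shift/restart.

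\textbf{Step 1 (reduction to zero initial data).} Given $z_0 \in \cM_\mu$, use Lemma~\ref{lem:B-mu}(b) to pick $z_\star \in \bE_{1,\mu}(J_T)$ with $z_\star(0) = z_0$, and seek solutions of the form $z = z_\star + h$ with $h \in {_0}\bE_{1,\mu}(J_T)$. Linearizing \eqref{nonlinear abstract equation} at $z_\star$, the problem for $h$ reads
\begin{equation*}
\partial_t h + \cA'(z_\star)h - \sF'(z_\star)h = {\sf f}(h) \text{ in } \Omega, \quad \cB'(z_\star) h = {\sf g}(h) \text{ on } \partial\Omega, \quad h(0) = 0,
\end{equation*}
where $({\sf f}(h),{\sf g}(h))$ collects the residuals of $z_\star$ together with the superlinear Taylor remainders of $\cA, \cB, \sF$ at $z_\star$. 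Since $\cB(z_0) = 0$ (as $z_0 \in \cM_\mu$) and the remainder is $O(h^2)$ with $h(0) = 0$, the boundary datum ${\sf g}(h)$ has vanishing trace at $t = 0$ and therefore lies in ${_0}\bF_\mu(J_T)$, which is the correct target space of Proposition~\ref{Prop: linearized abstract pb}.

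\textbf{Step 2 (fixed point and continuous dependence).} Proposition~\ref{Prop: linearized abstract pb} yields that the linear left-hand side is an isomorphism ${_0}\bE_{1,\mu}(J_T) \to \bE_{0,\mu}(J_T) \times {_0}\bF_\mu(J_T)$ whose inverse norm is uniformly bounded for $T \in (0, T_\star]$. Combined with the $C^1$-mapping properties of the nonlinearities (Proposition~\ref{Prop: mapping properties}), this lets me write the equation for $h$ as a fixed-point problem $h = \cS(z_\star)({\sf f}(h),{\sf g}(h))$; standard Taylor-remainder estimates together with the $T$-uniform bounds show that, for $T$ small enough, this map sends a small ball of ${_0}\bE_{1,\mu}(J_T)$ into itself and is a strict contraction. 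Banach's theorem then yields a unique local solution, which is extended to a maximal interval $[0, T_+(z_0))$ by iteration. For (b), Lipschitz dependence on $w_0 \in \cM_\mu \cap B_{X_{\gamma,\mu}}(z_0,\rho)$ follows by applying the implicit function theorem at $(z_\star, z_0)$, using the invertibility of the linearization and the $C^1$-regularity of the data; alternatively, subtract two solutions and invoke Proposition~\ref{pro: linear-time} for the difference.

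\textbf{Step 3 (smoothing for (c)).} For any $\delta \in (0, T)$, parabolic regularization embedded in $\bE_{1,\mu}$ (see \cite[Sec.~3]{MeSc12}) gives $z(\delta) \in X_{\gamma,1}$. Restarting the local existence theory at $\delta$ with $\mu = 1$ yields $z \in W^1_p([\delta, T]; X_0) \cap L_p([\delta, T]; X_1)$; differentiating once in time, using the equation to trade one time derivative for the spatial operator, and reabsorbing the weight $t^{1-\mu}$ gives $tz \in W^2_{p,\mu}(J_T; X_0) \cap W^1_{p,\mu}(J_T; X_1)$ and $z \in C^1((0, T]; X_{\gamma,\mu})$.

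\textbf{Main obstacle.} The hard part is verifying the contraction bounds in Step~2 for sources that are \emph{quadratic in the highest derivatives} — the terms $\upmu(\theta)|\nabla u|^2$, $\kappa(\theta)|\nabla F|^2$, and $\alpha(\theta)|\Delta m + |\nabla m|^2 m|^2$ in the temperature equation — together with the fully nonlinear trace operator $\cB(z) = \nu \cdot \tr_{\partial\Omega}(K(z)\nabla\theta)$. These force the stringent indices in \eqref{indices cond}, which are exactly what is required for the embedding \eqref{embedding-trace-space} and for the multiplication-algebra property of $\bF_\mu(J_T)$ in Lemma~\ref{lem:B-mu}(d) to close the estimates. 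Equally critical is the $T$-uniformity of the linear inverses, painstakingly built up through Propositions~\ref{Pro: MR linear}(b), \ref{pro: linear-time}, and \ref{Prop: linearized abstract pb}, without which the smallness-in-$T$ argument for contraction would collapse.
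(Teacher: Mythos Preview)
Your argument for part (a) is essentially the paper's, with only cosmetic differences: the paper first solves a linear reference problem to obtain a center $w$ for the contraction ball, whereas you subtract the extension $z_\star$ and run the fixed point directly on $h$. Both work, and the smallness of the $z_\star$-residual in ${_0}\bF_\mu(J_T)$ as $T\to 0$ (which you need but do not spell out) follows from absolute continuity together with $\cB(z_\star)(0)=\cB(z_0)=0$.

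For part (b) your sketch is too quick. The initial data $w_0$ are constrained to the \emph{nonlinear} manifold $\cM_\mu=\{\sB(z)z=0\}$, so you cannot apply the implicit function theorem directly with $w_0$ as a free variable in $X_{\gamma,\mu}$. The paper resolves this by invoking Lemma~\ref{lem:right inverse}: the linearization $\cB'(z_0)\in\cL(X_{\gamma,\mu},Y_{\gamma,\mu})$ admits a bounded right inverse $\cR(z_0)$, which yields a projection $\cP(z_0)=I-\cR(z_0)\cB'(z_0)$ onto the linear subspace $X^0_{\gamma,\mu}=N(\cB'(z_0))$. The implicit function theorem is then applied to a map $\cF:X^0_{\gamma,\mu}\times\bE_{1,\mu}(J_T)\to\bE_{1,\mu}(J_T)$, and one checks afterward that the resulting solution has the correct initial value $w_0$ (not merely its projection). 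Your alternative ``subtract two solutions and invoke Proposition~\ref{pro: linear-time}'' would also require handling the nonlinear boundary compatibility and does not immediately yield Lipschitz continuity.

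Part (c) contains a genuine gap. Restarting at $\delta$ with $\mu=1$ only recovers $z\in\bE_{1,1}([\delta,T])$, which is already implied by $z\in\bE_{1,\mu}(J_T)$; it gives no additional temporal regularity. Your step ``differentiating once in time'' is precisely what needs to be justified: for a quasilinear system you cannot simply differentiate the equation, since this presupposes $\partial_t z\in L_p(X_1)$, which is the conclusion you seek. The standard device --- used in the paper --- is the \emph{parameter trick}: set $z_\lambda(t)=z(\lambda t)$, observe that $v=z_\lambda$ solves a $\lambda$-dependent system with the same initial value $z_0$, and apply the implicit function theorem in $\lambda$ near $\lambda=1$. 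Differentiability of $\lambda\mapsto z_\lambda$ at $\lambda=1$ then yields $t\partial_t z=\Psi_0'(1)\in\bE_{1,\mu}(J_T)$ directly, from which $tz\in W^2_{p,\mu}(J_T;X_0)\cap W^1_{p,\mu}(J_T;X_1)$ and $z\in C^1((0,T];X_{\gamma,\mu})$ follow. A subtlety here (handled explicitly in the paper) is verifying that the implicitly defined $\Psi_0(\lambda)$ actually satisfies $\Psi_0(\lambda)(0)=z_0$ for $\lambda$ near $1$, since the right inverse $\cR(z_0)$ enters the definition of the IFT map.
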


%%%%%%%%%%%%%
\begin{proof}
In this proof, we will follow the ideas in   \cite[Theorem~14]{LPS06} and \cite[Proposition~4.3.2]{MeyriesThesis}.

\medskip\noindent
(a) 
%Here we follows the argument in \cite[Proposition~4.3.2]{MeyriesThesis}. 
Fix $z_*\in \bE_{1,\mu}(\bR_+)$ with $z_*(0)=z_0$, whose existence is supported by Lemma~\ref{lem:B-mu}(b).
We put 
\begin{equation*}
\begin{aligned}
\sA_* (t) z  &=  \cA'(z_* (t)) z  - \sF^\prime(z_* (t))  z, \\
\sB_* (t) z  &= \cB'(z_* (t))z  , 
\end{aligned}
\end{equation*}
where the functions $(\cA, \cB)$ are defined in \eqref{AB}, 
and consider the linear problem
\begin{equation}
\label{linear pb-z*}
\left\{
\begin{aligned}
      \pa_t z+\sA_* (t)  z  &=    \cA^\prime(z_*)z_*-\cA(z_*) +\sF(z_*) - \sF'(z_*)z_* &&\text{in}&&\Omega, \\
      \sB_*(t)  z                &=    \cB^\prime (z_*)z_* - \cB(z_*)  &&\text{on}&&\pa\Omega,\\
      z(0)&=z_0  &&\text{in}&&\Omega.
    \end{aligned}
    \right.
\end{equation}
Note that the compatibility condition
$$
\sB_*(0)  z_0=  \cB^\prime (z_0)z_0 - \cB(z_0) 
$$
is satisfied, as $\cB(z_0)=\sB(z_0)z_0=0$ by assumption.  
Therefore, Proposition~\ref{Prop: linearized abstract pb} implies that for any $T_0>0$, \eqref{linear pb-z*} has a unique solution $w\in \bE_{1,\mu}(J_{T_0})$.
Fix $T_0, R_0>0$. 
For every $T\in (0,T_0]$ and $R\in (0,R_0]$, we define a closed set in $\bE_{1,\mu}(J_T)$ by
$$
\Sigma(T,R)=\{ z\in \bE_{1,\mu}(J_T): \|z-w\|_{\bE_{1,\mu}(J_T)} \leq R,\, \gamma_0 z= z_0 \}.
$$
Observe that, by \cite[Theorems 4.2 and 4.5]{MeSc12}, there exists some $M>0$ such that for all $T\in (0,T_0]$ and $R\in (0,R_0]$
 and every $\hat{z}\in \Sigma(T,R)$, it holds that
\begin{equation}
\label{unif bdd Sigma}
\|{\rm tr}_{\partial\Omega} \hat{z} \|_{ \bF_\mu (J_T)} , \|{\rm tr}_{\partial\Omega}\nabla\hat{z} \|_{ \bF_\mu (J_T)} ,
 \|\hat{z} \|_{\bE_{1,\mu}(J_T)} , \|\hat{z} \|_{\bB_\mu(J_T)} \leq M.  
\end{equation}
Given any $\hat{z}\in \Sigma(T,R)$, we consider the linear problem 
\begin{equation}
\label{fixed point pb-z*}
\left\{
\begin{aligned}
      \pa_t z+\sA_* (t)  z  &=  \cA'(z_*) \hat{z}   - \cA(\hat{z})  +\sF(\hat{z}) - \sF'(z_*) \hat{z}  &&\text{in}&&\Omega, \\
      \sB_*(t)  z     &= \cB'(z_*) \hat{z}  - \cB(\hat{z})  &&\text{on}&&\pa\Omega,\\
      z(0)&=z_0  &&\text{in}&&\Omega.
    \end{aligned}
    \right.
\end{equation}
As $\cB(z_0)=\sB(z_0)z_0=0$, the compatibility condition 
$$
\sB_*(0)  z_0= \cB'(z_0) z_0   - \cB(z_0)  
$$
is satisfied and we can infer from Proposition~\ref{Prop: linearized abstract pb} that \eqref{fixed point pb-z*} has a unique solution $z=  \cT(\hat{z}) ~\in~\bE_{1,\mu}(J_T)$. \\
 Then it is clear that $z\in \Sigma(T,R)$ solves \eqref{nonlinear abstract equation}  iff it is a fixed point of $\cT$ in $ \Sigma(T,R)$.
Note that $v=\cT(\hat{z})-w$ solves
\begin{equation*}
%\label{fixed point pb-v}
\left\{
\begin{aligned}
      \pa_t v+\sA_* (t)  v  &=  \bF_*( \hat{z}) &&\text{in}&&\Omega, \\
      \sB_*(t) v     &= \bG_*( \hat{z} ) &&\text{on}&&\pa\Omega,\\
      v(0)&=0  &&\text{in}&&\Omega,
    \end{aligned}
    \right.
\end{equation*}
where
\begin{equation*}
\begin{aligned}
\bF_*(\hat{z}) &=   -\big(\cA(\hat{z})  - \cA(z_*)  -  \cA'(z_*)(\hat z-z_*) \big)   + \sF(\hat{z}) - \sF(z_*) - \sF^\prime(z_*)(\hat{z} - z_*),\\
\bG_*(\hat{z}) &=   -\big(\cB(\hat z) - \cB(z_*)- \cB^\prime(z_*)(\hat z - z_*) \big). 
\end{aligned}
\end{equation*}
In view of Proposition~\ref{Prop: linearized abstract pb}, 
there exits a constant $C>0$, which is independent of $T\in (0,T_0]$, such that
\begin{equation*}
\begin{aligned}
\|\cT(\hat{z})-w\|_{\bE_{1,\mu}(J_T)}  &\leq  C ( \|\cA(\hat{z})  - \cA(z_*)  -  \cA'(z_*)(\hat{z}-z_*)   \|_{\bE_{0,\mu}(J_T) } \\
& \quad + \|\sF(\hat{z}) - \sF (z_*)   -\sF'(z_*) (\hat{z}- z_*)\|_{\bE_{0,\mu}(J_T) } \\
& \quad + \|\cB(\hat{z})  - \cB(z_*)  -    \cB'(z_*)(\hat{z}-z_*) \|_{\bF_\mu (J_T) }  ),
\end{aligned}
\end{equation*}
where we have used the fact that 
$$
\cB(\hat{z})  - \cB(z_*)  -    \cB'(z_*)(\hat{z}-z_*)  \in \zbF_\mu(J_T).
$$ 
Using    \eqref{unif bdd Sigma}   and  \eqref{Frechet der est}, one verifies that 
$\|\cT(\hat{z})-w\|_{\bE_{1,\mu}(J_T)} \le R$, provided $T$ and $R$ are chosen small enough. 
This shows that $\cT$ maps $\Sigma(T,R)$ into itself.

\medskip\noindent
To show that $\cT: \Sigma(T,R) \to \Sigma(T,R)$ is a strict contraction,
 we pick functions  $\hat{z}, \bar{z}\in \Sigma(T,R)$. Then we obtain
\begin{align*}
  \| \cT(\hat{z})- \cT(\bar{z})\|_{\bE_{1,\mu}(J_T)}   
& \leq  C ( \| \cA(\hat{z})  -\cA(\bar{z}) -\cA'(z_*) (\hat{z} - \bar{z})    \|_{\bE_{0,\mu}(J_T)}  \\
& \quad + \|\sF(\hat{z}) - \sF (\bar{z})  -\sF'(z_*) (\hat{z}- \bar{z})\|_{\bE_{0,\mu}(J_T)}  \\
& \quad + \| \cB(\hat{z}) - \cB(\bar{z}) - \cB'(z_*) (\hat{z} - \bar{z}) \|_{\bF_\mu(J_T)}  )
\end{align*} 
for some constant $C$ that is independent of $T\in (0,T_0]$.
Employing   \eqref{unif bdd Sigma} and \eqref{Frechet der est}, \eqref{Frechet der est 2},  one verifies that 
\begin{equation*}
%\label{strict contraction}
  \| \cT(\hat{z})- \cT(\bar{z})\|_{\bE_{1,\mu}(J_T)}   \leq \frac{1}{2} \| \bar{z} - \hat{z} \|_{\bE_{1,\mu}(J_T)},
\end{equation*} 
provided $T$ and $R$ are chosen sufficiently small.

The contraction mapping principle implies the existence of a 
 unique solution $z\in \Sigma(T,R)$ to \eqref{nonlinear abstract equation} on the time interval $[0,T]$.
A standard argument then yields that $z$ is also the unique solution in  $\bE_{1,\mu}(J_T)$.

\medskip\noindent
The existence of a maximal interval of existence  $[0,T_+(z_0))$ can be obtained in a standard way as in \cite[Corollary~5.1.2]{PrSi16}. 

\medskip\noindent
(b)
%Here we will follows the idea in \cite[Theorem~14(a)]{LPS06}.
Pick an arbitrary $T\in (0,T_+(z_0))$  and let $z=z(\cdot,z_0)$ be the (unique) solution of \eqref{nonlinear abstract equation} obtained in part (a).
Then 
 $w\in \bE_{1,\mu}(J_T)$ is  a solution of \eqref{nonlinear abstract equation} with initial value $w_0\in \cM_\mu$
 iff  $w=z+v$, where $v \in \bE_{1,\mu}(J_T)$ solves the system 
\begin{equation}
\label{continuous dependence pb}
\left\{
\begin{aligned}
      \pa_t v+\sA_0 (t)  v  &= \bF(v(t)) &&\text{in}&&\Omega, \\
      \sB_0(t)  v     &= \bG(v(t) )&&\text{on}&&\pa\Omega,\\
      v(0)&=v_0=w_0-z_0 &&\text{in}&&\Omega 
    \end{aligned}
    \right.
\end{equation}
on $[0,T]$, with
\begin{align*}
\sA_0 (t)  v  &= \cA'(z (t)) v - \sF^\prime(z (t))  v = \sA(z (t)) v + [\sA^\prime(z (t))v] z (t)- \sF^\prime(z (t))   v , \\
\sB_0 (t) v   &= \cB'(z (t))v = \sB(z (t))v + [\sB^\prime (z (t))v] z (t), \\
\bF(v(t))      &=    -\big(\cA(z(t)+v(t))-\cA(z(t))-\cA^\prime(z(t))v(t) \big)  
                    +\sF(z(t)+v(t)) - \sF(z(t) ) - \sF^\prime(z (t))   v(t),\\
\bG(v(t)) &=   -\big( \cB(z(t)+v(t)) -\cB(z(t)) -\cB'(z(t)) v(t) \big)  .\\
\end{align*}
It follows from Proposition~\ref{Prop: mapping properties}  that
\begin{equation}
\label{reg bF and bG}
\bF \in C^1(\bE_{1,\mu}(J_T), \bE_{0,\mu}(J_T)) \quad \text{and}\quad  \bG \in C^1(\bE_{1,\mu}(J_T), \bF_\mu(J_T)) .
\end{equation}
Easy computations show that
\begin{equation}
\label{zeros bF and bG}
\bF (0)=0, \quad \quad \bG (0)=0 , \quad
\bF'(0)=0, \quad  \quad \bG'(0)=0.
\end{equation}
Note that the compatibility condition 
\begin{equation}
\label{diff compability}
\sB_0(0)v(0):= \cB^\prime(z_0)v(0)=\bG(v(0))
\end{equation}
is satisfied, as $\cB(z_0)=\cB(w_0)=0$ by assumption.  Let
$$
X_{\gamma,\mu}^0:=\{\hat{z}_0\in X_{\gamma,\mu}:   \cB^\prime(z_0) \hat{z}_0=0 \}.
$$
We then introduce the map $\cF: X_{\gamma,\mu}^0 \times \bE_{1,\mu}(J_T) \to \bE_{1,\mu}(J_T)$, defined by
$$
\cF(\hat{z}_0, \hat{v})= \hat{v}- \cS(\bF(\hat{v}), \bG(\hat{v}), \hat{z}_0+\cR(z_0)   \bG(\hat{v}(0))),
$$
where $\cS$ is the solution operator defined in Proposition~\ref{Prop: linearized abstract pb} and $\cR(z_0): Y_{\gamma,\mu} \to X_{\gamma,\mu} $ is the bounded right  inverse of   $\cB^\prime(z_0) $  asserted by Lemma~\ref{lem:right inverse}.
Observe that   $	\cF(0, 0)=0$ and the compatibility condition
$$
  \cB^\prime(z_0)   \left( \hat{z}_0+\cR(z_0)   \bG(\hat{v}(0)) \right)= \bG(\hat{v}(0)) 
$$
is satisfied for each $v\in  \bE_{1,\mu}(J_T)$.
It follows from  \eqref{reg bF and bG} that
$$
\cF\in C^1( X_{\gamma,\mu}^0 \times \bE_{1,\mu}(J_T) , \bE_{1,\mu}(J_T))  
$$
and from \eqref{zeros bF and bG}   that $\partial_2 \cF(0,0) \in \cL is(\bE_{1,\mu}(J_T))$.
The implicit function theorem then implies that there exist some $r>0$ and $\Psi\in C^1( B_{X_{\gamma,\mu}^0}(0,r), \bE_{1,\mu}(J_T))$ 
such that  $\hat{v}=\Psi(\hat{z}_0)$ with $\hat{z}_0 \in B_{X_{\gamma,\mu}^0}(0,r)$  iff
$ 
\cF(\hat{z}_0, \hat{v})=  0,  
$ 
or equivalently, $\hat{v} $    solves
\begin{equation*}
%\label{continuous dependence pb 2}
\left\{
\begin{aligned}
      \pa_t \hat{v}+\sA_0 (t)  \hat{v}  &= \bF(\hat{v}(t)) &&\text{in}&&\Omega, \\
      \sB_0(t)  \hat{v}     &= \bG(\hat{v}(t) )&&\text{on}&&\pa\Omega,\\
      \hat{v}(0)&= \hat{z}_0+\cR(z_0)  \bG(\hat{v}(0)) &&\text{in}&&\Omega. 
    \end{aligned}
    \right.
\end{equation*} 
We define $\cP(z_0): X_{\gamma,\mu} \to X_{\gamma,\mu}^0$ by
$\cP(z_0)\tilde z=(I-\cR(z_0)   \cB^\prime(z_0) ) \tilde z$.
For sufficiently small $\rho>0$, and $w_0 \in \cM_\mu \cap \in B_{X_{\gamma,\mu}}(z_0,\rho)$, we choose
$$
\hat{z}_0= \cP(z_0)  v_0 \in B_{X_{\gamma,\mu}^0}(0,r), \quad \text{where } v_0=w_0-z_0.
$$
In view of \eqref{diff compability}, it holds that
$$
 v_0= \hat{z}_0 +  \cR(z_0)  \cB^\prime(z_0)   v_0 = \hat{z}_0 +  \cR(z_0) \bG(v(0)).
$$
Therefore, $v$ and $\hat v$ solve the same system of equations.
We conclude that $ v:=\Psi(\cP(z_0)(w_0-z_0))$
is the unique solution of \eqref{continuous dependence pb} on $[0,T]$ with initial value $v_0$.
Hence,  $w=z+ \Psi(\cP(z_0) (w_0-z_0)) $ is the (unique) solution to \eqref{nonlinear abstract equation} with initial value  $w_0$ on $[0,T]$.
Setting $z(\cdot,w_0) =z(\cdot,z_0) + \Psi(\cP(z_0) (w_0-z_0)) $ 
we can infer that the mapping
\begin{equation*}
[w_0\mapsto z(\cdot,w_0)]:  \cM_\mu \cap B_{X_{\gamma,\mu}}(z_0, \rho)\to \bE_{1,\mu}(J_T) 
\end{equation*}
is Lipschitz continuous.

 \medskip\noindent
(c) 
%Here we will use the parameter trick in \cite[Theorem~14(b)]{LPS06}.
Fix $T\in (0,T_+(z_0))$ and $\epsilon\in (0,1)$ so small  that $(1+\epsilon)T<T_+(z_0)$.
Let $z\in \bE_{1,\mu}(J_T)$ be the unique solution of \eqref{nonlinear abstract equation} with initial value $z_0$.
Let $z_\lambda(t)=z(\lambda t)$.
Then $v=z_\lambda$ solves
\begin{equation}
\label{time regularity pb}
\left\{
\begin{aligned}
      \pa_t v+\sA_0 (t)  v  &= \fF(\lambda, v(t)) &&\text{in}&&\Omega, \\
      \sB_0(t)  v     &=  \fG( v(t) )&&\text{on}&&\pa\Omega,\\
      v(0)&= z_0 &&\text{in}&&\Omega 
    \end{aligned}
    \right.
\end{equation}
on $[0,T]$, where $\sA_0$ and $\sB_0$ are defined as in \eqref{continuous dependence pb} and
\begin{equation*}
\begin{aligned}
\fF(\lambda, v(t)) &=  -\big( \lambda \cA(v(t)))-\cA^\prime(z(t))v(t)\big) + \lambda \sF(v(t)) - \sF^\prime(z(t))v(t),  \\
\fG(v(t))  & =    -\big(\cB(v(t)) - \cB^\prime(z(t) v(t)\big),  \\
\end{aligned}
\end{equation*}
with $(\cA, \cB)$ defined in \eqref{AB}.
As $\fF(1,v)=-\big(\cA(v)-\cA^\prime(z)v\big) + \sF(v)-\sF^\prime(z)v$, 
%=(\sA(z)-\sA(v))v +[\sA^\prime(z)v] z + \sF(v)-\sF'(z)v$
one readily verifies that
$$
%\fF(1,z)=-\left(\cA(z)-\cA^\prime(z)z\right) +\sF(z) -\sF'(z)z, \quad  \fG(z)= [\sB^\prime(z)z]z, \quad 
\partial_2 \fF (1,z)=0, \quad \fG'(z)=0.
$$
Similar to part (b), we define $\cF_0: (1-\epsilon,1+\epsilon) \times \bE_{1,\mu}(J_T) \to \bE_{1,\mu}(J_T)$   by
$$
\cF_0(\lambda, v)= v - \cS(\fF( \lambda, v ), \fG(v), \hat{z}_0+\cR(z_0)   \fG(v(0))),
$$
where $ \hat{z}_0 = z_0- \cR(z_0)   \cB^\prime(z_0)z_0   \in X_{\gamma,\mu}^0$. 
Note that the compatibility condition 
$$ \cB^\prime(z_0) (\hat{z}_0+  \cR(z_0)   \fG(v(0)) )= \fG(v(0))$$ is again satisfied.
We have
$\cF_0(1,z)=0$ and $\partial_2 \cF_0(1,z)\in \cL is(\bE_{1,\mu}(J_T))$.
Therefore, the implicit function theorem implies that there exist $\delta \in (0,\epsilon) $ and $\Psi_0\in C^1( (1-\delta, 1+\delta), \bE_{1,\mu}(J_T))$ 
such that $v= \Psi_0(\lambda)$ iff $v$ solves
\begin{equation*}
%\label{time regularity pb 2}
\left\{
\begin{aligned}
      \pa_t v+\sA_0 (t)  v  &= \fF(\lambda, v(t)) &&\text{in}&&\Omega, \\
      \sB_0(t)  v     &=  \fG( v(t) )&&\text{on}&&\pa\Omega,\\
      v(0)&= z_0- \cR(z_0)   \cB^\prime(z_0)z_0  + \cR(z_0) \fG( v(0) ) &&\text{in}&&\Omega .
    \end{aligned}
    \right.
\end{equation*}
From $\cF_0(1,z)=0$, we infer that $z=\Psi_0(1)$. We want to show that $\Psi_0(\lambda)=z_\lambda$.
To this end, notice that $z_0(\lambda):=\gamma_0 \Psi_0(\lambda)$ satisfies
\begin{equation*}
\begin{aligned}
z_0(\lambda) &=  z_0- \cR(z_0)   \cB^\prime(z_0)z_0  + \cR(z_0) \fG( z_0(\lambda) ) 
 =z_0-\cR(z_0)\big(\cB(z_0(\lambda) - \cB^\prime(z_0)(z_0(\lambda)-z_0)\big).
% z_0 + \cR(z_0) \big(  -\sB(z_0(\lambda))z_0(\lambda)  -\sB(z_0)z_0 + \sB(z_0) z_0 (\lambda)+ [\sB^\prime(z_0)(z_0(\lambda)-z_0)]z_0  \big).
\end{aligned}
\end{equation*}
By using the fact that $\cB(z_0)=\sB(z_0)z_0=0$, we further obtain
\begin{equation*}
%\label{initial diff}
\begin{split}
z_0(\lambda) -   z_0 
 =- \cR(z_0) \big(\cB(z_0(\lambda))-\cB(z_0) - \cB^\prime (z_0)(z_0(\lambda)-z_0))\big).
\end{split}
\end{equation*}
We can thus conclude  that
\begin{align*}
\| z_0(\lambda) -   z_0 \|_{X_{\gamma,\mu}}  
&\leq  \Phi(\| z_0(\lambda) -   z_0 \|_{X_{\gamma,\mu}})\| z_0(\lambda) -   z_0 \|_{X_{\gamma,\mu}}\\
 &\leq  \Phi(\| \Psi_0(\lambda) -  \Psi_0(1) \|_{\bE_{1,\mu}(J_T) } )\| z_0(\lambda) -   z_0 \|_{X_{\gamma,\mu}}.
\end{align*}
By choosing $\delta $ so small that 
$$\sup\limits_{\lambda\in (1-\delta, 1+\delta) } \Phi(\| \Psi_0(\lambda) -  \Psi_0(1) \|_{ \bE_{1,\mu}(J_T) } ) \leq  1/2,$$ 
we have $z_0(\lambda)=z_0$ for all $\lambda\in (1-\delta, 1+\delta) $. 
Thus, $\Psi_0(\lambda)$ solves \eqref{time regularity pb} and hence $\Psi_0(\lambda)=z_\lambda$. The differentiability of $\Psi_0$ implies that
$ \Psi_0'(1)= t \pa_t z  \in \bE_{1,\mu}(J_T)$. 
Therefore, $\pa_t (t z) =z + t \pa_t z \in \bE_{1,\mu}(J_T)$.
Then the asserted the regularity of $z$ follows.
\end{proof} 
  
\goodbreak
In the following we will discuss the remaining issues concerning the pressure function $\pi$ and the constraint $|m|=1$. 
%The   proposition below can be established by following the proof of \cite[Proposition~2.4]{DSS23} line by line.
\begin{proposition}\label{Prop:equivalence of 2 sys}
Given $T>0$,  the following statements are equivalent:
 \begin{enumerate}
   \item[{\rm (a)}] \eqref{eqn:Rewrit1} has a solution $(u, F, \theta, m, \pi)\in \bE_{1, \mu}(J_T)\times L_{p, \mu}( J_T; \dot{H}_p^1(\Omega))$. 
   \vspace{1mm}
   \item[{\rm (b)}] \eqref{nonlinear abstract equation} has a solution $(u, F, \theta, m)\in \bE_{1, \mu}(J_T)$.
 \end{enumerate}
\end{proposition}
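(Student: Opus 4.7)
The plan is to establish the two implications separately, using the $L_p$-Helmholtz decomposition $L_p(\Omega;\bR^3)=L_{p,\sigma}(\Omega;\bR^3)\oplus \nabla \dot{H}_p^1(\Omega)$ as the crucial tool. Throughout, write ${\sf RHS}:=-\nabla\cdot(\nabla m\odot\nabla m)+\nabla\cdot(FF^\sT)-u\cdot\nabla u+\nabla\cdot(\upmu(\theta)\nabla u)-\partial_t u$ and observe that all the remaining equations of~\eqref{eqn:Rewrit1} (the $F$, $\theta$ and $m$ equations, together with their boundary conditions) coincide with the corresponding component equations of~\eqref{nonlinear abstract equation}, so nothing needs to be done for them.

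For (a)$\Rightarrow$(b), I would simply apply the Helmholtz projection $P_H$ to the momentum equation of~\eqref{eqn:Rewrit1}. Since $u\in \bE_{1,\mu}^1(J_T)\subset L_{p,\mu}(J_T; L_{p,\sigma}(\Omega;\bR^3))$ is solenoidal and vanishes on $\partial\Omega$, one has $P_H \partial_t u=\partial_t u$; moreover $P_H\nabla \pi=0$ since $\pi\in L_{p,\mu}(J_T;\dot H_p^1(\Omega))$. Rearranging terms and using the definitions of $A^1(\theta)$, $\cC^1(m)$, and $\sF$ yields exactly the first block of~\eqref{nonlinear abstract equation}. The divergence-free condition $\nabla\cdot u=0$ and the boundary condition $u|_{\partial\Omega}=0$ are encoded in $u\in X_1^1$, so $(u,F,\theta,m)\in \bE_{1,\mu}(J_T)$ is a solution of~\eqref{nonlinear abstract equation}.

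For (b)$\Rightarrow$(a), the task is to recover $\pi$. Writing the first block of~\eqref{nonlinear abstract equation} as $\partial_t u=P_H\,{\sf RHS}$, we see that ${\sf RHS}-P_H({\sf RHS})=(I-P_H){\sf RHS}$ lies in the range of $(I-P_H)$, hence in $\nabla\dot H_p^1(\Omega)$ pointwise in time. Thus there is a unique (up to a time-dependent constant) function $\pi(t,\cdot)\in \dot H_p^1(\Omega)$ with
\begin{equation*}
\nabla\pi:=-(I-P_H){\sf RHS}.
\end{equation*}
Adding $\nabla\pi$ to the equation $\partial_t u=P_H\,{\sf RHS}$ gives exactly the momentum equation of~\eqref{eqn:Rewrit1}. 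The key point is to verify that ${\sf RHS}\in L_{p,\mu}(J_T;L_p(\Omega;\bR^3))$, so that $\nabla\pi\in L_{p,\mu}(J_T;L_p(\Omega;\bR^3))$ and hence $\pi\in L_{p,\mu}(J_T;\dot H_p^1(\Omega))$ as required. Each term is controlled using the regularity $(u,F,\theta,m)\in\bE_{1,\mu}(J_T)$: the linear parts $\partial_t u$ and $\nabla\cdot(\upmu(\theta)\nabla u)$ sit in $L_{p,\mu}(J_T;L_p)$ directly, while the quadratic nonlinearities $u\cdot\nabla u$, $\nabla\cdot(FF^\sT)$, and $\nabla\cdot(\nabla m\odot\nabla m)$ are handled by the Nemytskii/mapping estimates collected in Appendix~B (or equivalently, the embeddings furnished by~\eqref{embedding-trace-space}).

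The main obstacle is the second implication, specifically checking the $L_p$-integrability of the nonlinear terms composing ${\sf RHS}$; once this is in place, the Helmholtz decomposition supplies the pressure with the claimed regularity automatically, and the equivalence follows.
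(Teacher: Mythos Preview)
Your approach is correct and essentially identical to the paper's: both directions hinge on the $L_p$-Helmholtz decomposition, with the pressure in (b)$\Rightarrow$(a) recovered as the gradient part of the right-hand side (the paper phrases this concretely as the weak Neumann problem $\Delta\pi=\nabla\cdot v$, $\partial_\nu\pi=v\cdot\nu$, but that is exactly the construction of $(I-P_H)v$). One small bookkeeping slip to fix: since your ${\sf RHS}$ already contains $-\partial_t u$, the first block of~\eqref{nonlinear abstract equation} reads $P_H\,{\sf RHS}=0$ (not $\partial_t u=P_H\,{\sf RHS}$), so that ${\sf RHS}=(I-P_H){\sf RHS}$ and you should set $\nabla\pi:={\sf RHS}$ rather than $-(I-P_H){\sf RHS}$.
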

\begin{proof}
 The implication (a)$\Rightarrow$(b) follows by just applying $P_H$ to both sides of the $u$ equation in \eqref{eqn:Rewrit1}. 
  We are left to show (b)$\Rightarrow$(a). Suppose $z=(u, F, \theta, m)\in \bE_{1, \mu}(J_T)$ solves \eqref{nonlinear abstract equation}.
   Let
  $$v=-u\cdot \nabla u+\nabla\cdot (\upmu(\theta)\nabla u)-\nabla\cdot (\nabla  m\odot \nabla m)+\nabla\cdot (F F^\top).$$
  Let $\pi$ be an $W^1_p$ solution to $\Delta \pi=\nabla\cdot v$ in $\Omega$ with $\pa_\nu \pi=v\cdot\nu$ on $\pa \Omega$, i.e., 
  $$(\nabla\pi(t)| \nabla\phi)=( v(t)| \nabla\phi), \qquad \forall\phi\in \dot{H}_{p'}^1(\Omega), \quad p'=p/(p-1) .$$
  From standard elliptic theory we conclude that $\pi\in L_{p, \mu}( J_T;\dot{H}_p^1(\Omega))$ and $P_H v(t)=v(t)-\nabla \pi(t)$. 
  Then by \eqref{nonlinear abstract equation} we have 
  $$\pa_t u+\nabla \pi-v=\pa_t u-P_H v=0.$$
  Hence  $(u, F, \theta, m, \pi)$ solves \eqref{eqn:Rewrit1}$_1$. 
\end{proof}
 Now we are in a position to state the main theorem concerning local well-posedness of \eqref{eqn:Rewrit1}. To this end, we define the state manifold of \eqref{eqn:Rewrit1} by
\begin{equation*}
%\label{state mnfd}
 \cSM_\mu :=\{   z= (u, F, \theta, m) \in X_{\gamma,\mu}: \;  \theta>0, \; |m|=1,\; \sB(z)z=0   \} .
\end{equation*}
%%%%%%%%%%%%%%%%
\goodbreak
\begin{theorem}[Local well-posedness of \eqref{eqn:Rewrit1}]
  \label{thm:MainLocal}
  Assume \eqref{assumption}  and \eqref{indices cond}.
  \begin{itemize}
  \item[{\rm (a)}]
  Suppose that $z_0=(u_0, F_0, \theta_0, m_0)\in \cM_\mu$.
   Then there exists a number $T>0$ such that  \eqref{eqn:Rewrit1} has a unique solution 
  $$\widetilde{z}(\cdot,z_0)= (u, F, \theta, m, \pi)\in \bE_{1, \mu}(J_T)\times L_{p, \mu}( J_T; \dot{H}_q^1(\Omega )).$$
  Each solution can be extended to a maximal existence interval $[0, T_+(z_0))$.
  
  \smallskip\noindent
   If, in addition, $|m_0|\equiv 1$, then the solution also satisfies
  $$|m(t)|\equiv 1, \quad t\in[0, T_+(z_0)).$$ 
Moreover, it holds that $$\theta(t, x)\ge \min_{\overline{\Omega}}\theta_0(x), \quad(t, x)\in [0, T_+(z_0))\times \bar{\Omega}.$$  
%%%%%
 \item[{\rm (b)}] Let $T<T_+(z_0)$. \\
  Then there exists a number $\rho>0$ such that
for every $w_0\in   \cSM_\mu\cap B_{X_{\gamma,\mu}}(z_0, \rho) ,$ the unique solution $\widetilde{z}(\cdot, w_0 )$  of ~\eqref{eqn:Rewrit1}  with initial condition $w_0$ belongs to $\bE_{1,\mu}(J_T) \times L_{p, \mu}( J_T; \dot{H}_q^1(\Omega ))$.
Moreover,
the mapping
\begin{equation*}
[w_0\mapsto \widetilde{z}(\cdot, w_0 )]:  \cSM_\mu \cap B_{X_{\gamma,\mu}}(z_0, \rho)\to \bE_{1,\mu}(J_T) \times L_{p, \mu}( J_T; \dot{H}_q^1(\Omega )) 
\end{equation*}
is (Lipschitz) continuous. \\
Hence, the system \eqref{eqn:Rewrit1} generates a (Lipschitz) continuous semiflow on $\cSM_\mu$.
  \end{itemize}
   \end{theorem}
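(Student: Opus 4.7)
The plan is to derive Theorem~\ref{thm:MainLocal} by combining the abstract well-posedness result Theorem~\ref{Thm:wellposed abstract} with Proposition~\ref{Prop:equivalence of 2 sys} (to recover the pressure), and then to verify the two pointwise properties (preservation of $|m|=1$ and the minimum principle for $\theta$) by direct PDE arguments on top of the solution supplied by the abstract theory.

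For part (a), given $z_0\in\cM_\mu$, Theorem~\ref{Thm:wellposed abstract}(a) yields a unique maximal solution $z=(u,F,\theta,m)\in\bE_{1,\mu}(J_T)$ of \eqref{nonlinear abstract equation} on some interval $[0,T_+(z_0))$; applying Proposition~\ref{Prop:equivalence of 2 sys} then produces the associated pressure $\pi\in L_{p,\mu}(J_T;\dot H^1_p(\Omega))$ via the standard weak Neumann problem, and $(u,F,\theta,m,\pi)$ solves the full system~\eqref{eqn:Rewrit1}. Concerning the constraint, under the hypothesis $|m_0|=1$ I would set $\psi:=|m|^2-1$ and dot the $m$-equation of~\eqref{eqn:Rewrit1} with $2m$. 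Using the identity $m\cdot(m\times\Delta m)=0$ together with $2m\cdot\Delta m=\Delta|m|^2-2|\nabla m|^2$ produces the scalar transport-diffusion equation
\begin{equation*}
\pa_t\psi+u\cdot\nabla\psi-\alpha(\theta)\Delta\psi=2\alpha(\theta)|\nabla m|^2\psi\quad\text{in }\Omega,\qquad \pa_\nu\psi=0\ \text{on }\pa\Omega,\qquad\psi(0)=0.
\end{equation*}
By Lemma~\ref{lem:B-mu}(a) and~\eqref{embedding-trace-space} the coefficients $u$, $\alpha(\theta)|\nabla m|^2$ and $\alpha(\theta)$ are bounded on $[0,T]\times\overline\Omega$ for any $T<T_+(z_0)$, so a standard $L_2$-energy estimate (using $\nabla\cdot u=0$ and $u|_{\pa\Omega}=0$ to eliminate the transport term) combined with Gronwall yields $\psi\equiv0$, i.e.\ $|m|\equiv1$.

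For the minimum principle, let $\theta_m:=\min_{\overline\Omega}\theta_0$ and $v:=\theta-\theta_m$. The temperature equation and the assumption~$\upmu,\kappa,\alpha\geq0$ give
\begin{equation*}
\pa_t v+u\cdot\nabla v-\nabla\cdot(K(z)\nabla v)\geq 0,\qquad \nu\cdot K(z)\nabla v=0\ \text{on }\pa\Omega,\qquad v(0)\geq 0.
\end{equation*}
Testing with $-v_-=\min(v,0)\in H^1(\Omega)$, using $\nabla\cdot u=0$, $u|_{\pa\Omega}=0$ and the coercivity $K\geq\underline c\,I_3$ from~\eqref{assumption}, one obtains $\tfrac12\tfrac{d}{dt}\|v_-\|_{L_2}^2+\underline c\|\nabla v_-\|_{L_2}^2\leq 0$, whence $v_-\equiv 0$, that is $\theta\geq\theta_m$. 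Part (b) then follows by combining Theorem~\ref{Thm:wellposed abstract}(b) (Lipschitz semiflow on $\cM_\mu$) with the linear and continuous dependence of $\pi$ on $z$ through the elliptic Neumann problem of Proposition~\ref{Prop:equivalence of 2 sys}; the additional conditions $\theta>0$ and $|m|=1$ defining $\cSM_\mu\subset\cM_\mu$ are preserved in time by the two arguments just indicated, so the semiflow restricts to $\cSM_\mu$.

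The only nonroutine step is deriving the clean scalar equation for $\psi=|m|^2-1$: one has to verify that the nonlinear terms on the right-hand side of~\eqref{eqn:Rewrit1}$_{8}$ are compatible with the geometric constraint. The cancellation hinges on the vector identity $m\times(m\times\Delta m)=(m\cdot\Delta m)m-|m|^2\Delta m$, which connects the rewritten form of the Landau--Lifshitz--Gilbert system (with the $\alpha(\theta)|\nabla m|^2 m$ source term) to the original geometric form appearing in~\eqref{magneto sys}. Once this identity and the identities $m\cdot(m\times\Delta m)=0$ and $2m\cdot\Delta m=\Delta|m|^2-2|\nabla m|^2$ are used, everything else reduces to parabolic maximum principle arguments and bootstrapping from the abstract theory.
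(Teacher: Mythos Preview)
Your proposal is correct and follows essentially the same approach as the paper: local existence and the semiflow come from Theorem~\ref{Thm:wellposed abstract} combined with Proposition~\ref{Prop:equivalence of 2 sys}, the constraint $|m|\equiv 1$ is handled via the scalar equation for $|m|^2-1$ (the paper simply cites \cite[Theorem~2.5]{DSS23} for this maximum-principle argument, which you spell out explicitly), and the lower bound on $\theta$ is obtained by testing with the negative part exactly as you describe (the paper uses $\psi=\min\theta_0-\theta$ and tests with $\psi_+$, which is your argument with a sign flip). Your observation that the Lipschitz dependence of $\pi$ on $z$ follows from the $C^1$-smoothness of the right-hand side of the $u$-equation and the linearity of the weak Neumann problem is precisely what the paper records in part~(b).
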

\medskip\noindent
\emph{Proof.} 
(a)
The fact that $|m(t)|=1$ up to $T_+(z_0)$, provided $|m_0|=1$, follows from a parabolic maximum principle (c.f. \cite[Theorem 2.5]{DSS23}). 
By \eqref{embedding-trace-space}, $\theta_0\in C^1(\overline\Omega)$, so that   $\min_{\bar{\Omega}}\theta_0(x)$ exists.
For $(t,x)\in [0,T_+(z_0))\times \Omega$, let $\psi(t, x)=\min_{\bar{\Omega}}\theta_0(x)-\theta(t, x)$. Then we can derive from \eqref{eqn:Rewrit1} that $\psi$ solves
  \begin{equation}
    \begin{aligned}
      \pa_t \psi+u\cdot \nabla \psi&\le \nabla\cdot (K(z)\nabla \psi)&&\text{in}&&\Omega, \\
      \nu \cdot {\rm tr}_{\partial\Omega} (K(z)\nabla \psi) &=0&&\text{on}&&\pa\Omega, \\
      \psi(0, x) &\le 0&&\text{in}&&\Omega.
    \end{aligned}
    \label{eqn:psieq}
  \end{equation}
  Multiplying both sides of \eqref{eqn:psieq}$_1$ by $\psi_+=\max\{\psi, 0\}$ and integrating over $\Omega$ we can show that 
  \begin{equation}
\pa_t\left( \frac{\|\psi_+\|_2^2}{2} \right)+c\|\nabla \psi_+\|_2^2\le \pa_t\left( \frac{\|\psi_+\|_2^2}{2} \right)+
 (K(z)\nabla \psi_+|\nabla \psi_+)_\Omega   \le 0.\label{eqn:intineq}
  \end{equation}
In fact, we can compute
\begin{align*}
  (\pa_t\psi|\psi_+)_\Omega&=(\pa_t\psi_+|\psi_+)_\Omega=\pa_t \frac{\|\psi_+\|_2^2}{2}, \\
  (u\cdot \nabla\psi|\psi_+)_\Omega&=(u\cdot \nabla\psi_+|\psi_+)_\Omega=\int_\Omega u\cdot\nabla \frac{| \psi_+|^2}{2}\, dx=0, \\
  (\nabla\cdot (K(z)\nabla \psi)|\psi_+)_\Omega&=-\int_{\Omega}(K(z)\nabla \psi)\cdot \nabla \psi_+ \, dx=-(K(z)\nabla \psi_+|\nabla \psi_+)_\Omega.
\end{align*}
Integrating \eqref{eqn:intineq} with respect to $t$, and using the fact that $\psi_+(0, x)=0$ we conclude 
that $\|\psi_+(t)\|_{2}^2=0$ for $t\in[0, T_+(z_0))$. 
Hence, $\psi_+(t, x)=0$ for $(t, x)\in [0, T_+(z_0))\times\overline{\Omega}$. 

\medskip\noindent
(b) This part follows directly from Theorem~\ref{Thm:wellposed abstract}(b), the proof of Proposition~\ref{Prop:equivalence of 2 sys} and the fact that
\begin{align*}
[z\mapsto  & (-u\cdot \nabla u+\nabla\cdot (\upmu(\theta)\nabla u)-\nabla\cdot (\nabla  m\odot \nabla m)+\nabla\cdot (F F^\top))]\\
& \in C^1(\bE_{1,\mu}(J_T) , L_{p,\mu}(J_T; L_p(\Omega; \bR^3))). \qquad\qquad &&\square
\end{align*} 
 \goodbreak

%%%%%%%%%%%%%%%%%%
%% Stability  %%%%
%%%%%%%%%%%%%%%%%%

\section{Stability and Long-time behavior}\label{section:stability}

In this section,  we will study global existence and stability of solutions to \eqref{magneto sys}.
The next theorem is to establish the long-time behavior of solutions and the stability with respect to constant equilibria. 
%%%%%%%%%%%%%%%%
\begin{theorem}\label{thm:global existence}
Assume \eqref{assumption}, \eqref{indices cond}, $|m_0|=1$,  and the positivity condition $ \theta_0>0$. 
Let $z=z(\cdot, z_0)$ be the solution of \eqref{magneto sys}, defined on its maximal interval of existence $[0, T_+(z_0)).$
Then the following properties hold.
\begin{enumerate}
\item[{\rm (a)}]  
We have the following alternatives:
\vspace{2mm}
\begin{enumerate}
\item[{\rm (i)}] $T_+(z_0)=\infty$, that is, $z$ is a global solution;
\vspace{2mm}
\item[{\rm (ii)}] $\displaystyle \lim_{t\to T_+(z_0)}z(t) $ does not exist in $X_{\gamma,\mu}$.
\end{enumerate}
\vspace{1mm}
\item[{\rm (b)}]
Suppose 
$$
\displaystyle \sup_{t\in [\delta, T_+(z_0))} \|z(t)\|_{X_{\gamma, \overline{\mu}}}<\infty
\quad
\text{for some $\delta\in (0, T_+(z_0))$ and some $\bar{\mu}\in(\mu, 1]$}.
$$
Then $z$ exists globally and $\dist(z(t), \cE)\to 0$ in $X_{\gamma, 1}$ as $t\to \infty$. 
\end{enumerate}
\end{theorem}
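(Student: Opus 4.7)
For part (a), I would argue by a standard continuation principle. Suppose for contradiction that $T_+(z_0) < \infty$ and that $z_\star := \lim_{t \to T_+(z_0)} z(t)$ exists in $X_{\gamma,\mu}$. To restart the problem at $z_\star$ via Theorem~\ref{thm:MainLocal}(a), I need $z_\star \in \cSM_\mu$. Positivity $\theta_\star > 0$ is inherited from the uniform lower bound $\theta(t,\cdot) \geq \min_{\bar\Omega}\theta_0 > 0$ already furnished by Theorem~\ref{thm:MainLocal}(a); the constraint $|m_\star| \equiv 1$ follows from pointwise preservation of $|m(t)| \equiv 1$ combined with the embedding~\eqref{embedding-trace-space} applied to the $m$-component of $X_{\gamma,\mu}$; and $\sB(z_\star)z_\star = 0$ follows from continuity on $X_{\gamma,\mu}$ of the co-normal trace map $z \mapsto \nu\cdot \tr_{\pa\Omega}(K(z)\nabla\theta)$, ensured by~\eqref{indices cond}. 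Theorem~\ref{thm:MainLocal}(a) then produces a solution starting at $z_\star$, which concatenated with $z$ extends past $T_+(z_0)$, contradicting maximality.

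For global existence in part (b), the uniform $X_{\gamma,\bar\mu}$-bound with $\bar\mu > \mu$ yields, by the compact Sobolev embedding $X_{\gamma,\bar\mu}\hookrightarrow\hookrightarrow X_{\gamma,\mu}$ on the bounded domain $\Omega$, precompactness of the orbit $\{z(t): t \in [\delta, T_+(z_0))\}$ in $X_{\gamma,\mu}$. If $T_+(z_0) < \infty$, any sequence $t_n \nearrow T_+(z_0)$ admits a subsequence with $z(t_{n_k}) \to z_\star$ in $X_{\gamma,\mu}$; verifying $z_\star\in \cSM_\mu$ as in (a) and invoking the continuous dependence and uniform local existence time asserted by Theorem~\ref{thm:MainLocal}(b) extends the solution beyond $T_+(z_0)$. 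The contradiction forces $T_+(z_0) = \infty$.

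Convergence to $\cE$ then proceeds by a LaSalle argument. Define
$$
\omega(z_0) := \bigcap_{s\geq \delta}\overline{\{z(t):t\geq s\}}^{X_{\gamma,\mu}};
$$
by the precompactness above it is non-empty and compact, and forward-invariant under the semiflow generated by Theorem~\ref{thm:MainLocal}(b). By Proposition~\ref{Lyapunov}(b), $-\sN$ is a strict Lyapunov function, and the conservation of $\sE$ together with the uniform bounds on $\theta$ (below by the maximum principle, above by the $X_{\gamma,\bar\mu}$-bound) makes $\sN(z(t))$ monotone and bounded, hence convergent. Consequently $\sN$ is constant on $\omega(z_0)$, and the strict Lyapunov property forces every complete trajectory through a point $z_\infty\in\omega(z_0)$ to be at rest, i.e.\ $z_\infty\in\cE$. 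Therefore $\dist(z(t),\cE)\to 0$ in $X_{\gamma,\mu}$.

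To upgrade this convergence to $X_{\gamma,1}$, I would use parabolic smoothing. For each $t_0\geq \delta$, starting from initial data $z(t_0)$ uniformly bounded in $X_{\gamma,\bar\mu}$, the weighted maximal regularity of Propositions~\ref{pro: linear-time} and~\ref{Prop: linearized abstract pb} yields a uniform $\bE_{1,\bar\mu}$-bound of $z|_{[t_0,t_0+1]}$, so evaluation at $t=t_0+1$ via the time-trace embedding gives a uniform $X_{\gamma,1}$-bound on $\{z(t):t\geq \delta+1\}$. Combining this with the further regularity $tz\in W^1_{p,\mu}(J;X_1)$ of Theorem~\ref{Thm:wellposed abstract}(c) and the compact embeddings between the trace spaces upgrades uniform boundedness to precompactness in $X_{\gamma,1}$, after which the LaSalle argument above transfers to give $\dist(z(t),\cE)\to 0$ in $X_{\gamma,1}$. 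The main obstacle I anticipate is exactly this last step: the natural parabolic smoothing lands the solution \emph{in} $X_{\gamma,1}$, and extracting genuine compactness (rather than mere boundedness) in this endpoint interpolation space requires carefully iterating the maximal-regularity machinery while keeping track of the quasilinear coupling and the nonlinear boundary constraint.
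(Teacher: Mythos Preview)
Your strategy for part (a) and for global existence in (b) is essentially the paper's: verify that the putative limit (or subsequential limit) lies in $\cSM_\mu$, then invoke the uniform local existence time from Theorem~\ref{thm:MainLocal}(b) over the relatively compact orbit to continue past $T_+(z_0)$. This is correct.

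The difference lies in how you obtain convergence in $X_{\gamma,1}$. You first run LaSalle in $X_{\gamma,\mu}$ and then attempt to bootstrap to $X_{\gamma,1}$ by parabolic smoothing, and you correctly identify this bootstrap as the delicate point: boundedness in $X_{\gamma,1}$ is not hard, but extracting genuine \emph{precompactness} in the endpoint space $X_{\gamma,1}$ from boundedness alone requires either a still-stronger uniform bound or an ad hoc argument. The paper sidesteps this entirely by reversing the order. It observes that the time-$T_0$ solution map factors as
\[
G = G_2\circ G_1:\ \cU\subset X_{\gamma,\mu}\ \xrightarrow{G_1}\ \bE_{1,\mu}(J_{T_0})\ \xrightarrow{G_2}\ X_{\gamma,1},
\]
where $G_1$ is the solution operator (continuous by Theorem~\ref{thm:MainLocal}(b)) and $G_2$ is evaluation at $t=T_0$ (continuous via the estimate $\|z(T_0)\|_{X_{\gamma,1}}\le C(T_0)\|z\|_{\bE_{1,\mu}(J_{T_0})}$). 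Since $\{z(t)\}_{t\ge 0}$ is relatively compact in $X_{\gamma,\mu}$ by the compact embedding $X_{\gamma,\bar\mu}\hookrightarrow X_{\gamma,\mu}$, its image $\{z(t+T_0)\}_{t\ge 0}=\{z(t)\}_{t\ge T_0}$ under the continuous map $G$ is automatically relatively compact in $X_{\gamma,1}$. One then runs LaSalle directly in $\cSM_1$ (citing \cite[Theorem~17.2]{Ama90}) to obtain $\dist_{X_{\gamma,1}}(z(t),\omega(z_0))\to 0$, and the strict Lyapunov property of $-\sN$ forces $\omega(z_0)\subset\cE$. This continuous-image argument is the clean device that dissolves the obstacle you flagged; no iteration of maximal regularity or appeal to Theorem~\ref{Thm:wellposed abstract}(c) is needed.
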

%%%%%%%%%%%%%
\begin{proof}
(a) 
We will prove the assertion by following the strategy in \cite[Corollary~5.1.2]{PrSi16}.
Assume that $T_+(z_0)<\infty$ and $z(\cdot,z_0)$ converges to some $z_1$ in $X_{\gamma,\mu}$ as $t\to T_+(z_0)$.
Lemma~\ref{lem: Nemyskii} implies that $\sB(z_1)z_1=0$. Combining Theorem \ref{thm:MainLocal}(a) and the assumption, 
we have that there exists an $\eta>0$ depending on $\min_{\bar{\Omega}}\theta_0$ such that 
	\begin{equation}\label{lower bdd}
		{\rm dist}_{X_{\gamma, \mu}}(z(t), \pa V_\mu)\ge \eta, \quad \text{ for all }t\in[0, T_+(z_0)), 
	\end{equation} 
where 	$V_\mu=\{z=(u, F, \theta, m)\in X_{\gamma, \mu}:\theta>0\}$. We thus infer that $z_1\in \cSM_\mu $. 
Then the orbit $\cV:=\{z(t) : 0\le t <  T_+(z_0) \}$ is relatively compact in   $\cSM_\mu$. 
It   follows from Theorem~\ref{thm:MainLocal}(b) and a compactness argument that there exists $T_0>0$ such that  for each $s \in [0 , T_+(z_0))$, system~\eqref{eqn:Rewrit1} with initial value $z(s)$ has a unique solution in  $\bE_{1,\mu}(J_{T_0})$. 
Fixing $s_0\in (T_+(z_0)-T_0, T_+(z_0))$,  system~\eqref{eqn:Rewrit1} with initial value $z(s_0)$ has a solution 
$v \in \bE_{1,1}(J_{T_0})$, which, by uniqueness, coincides with $z(s_0+ \cdot\, , z_0)$ on $[s_0, T_+(z_0))$.  
In view of Proposition~\ref{Prop:equivalence of 2 sys}, the solution  $\widetilde{z}(\cdot,z_0)$ of \eqref{magneto sys} can be extended beyond $T_+(z_0)$,
a contradiction.

\medskip\noindent
(b)
We will prove the assertion by following the strategy in \cite[Section 5.7]{PrSi16}.
By Theorem~\ref{thm:MainLocal}(b), the system \eqref{eqn:Rewrit1} defines a local semiflow on $\cSM_\mu  $.
From the assumption   and the compact embedding
$$
X_{\gamma, \overline{\mu}} \hookrightarrow X_{\gamma, \mu},
$$
we infer that the orbit $\cV:=\{z(t) : 0\le t < T^+(z_0)\}$ is relatively compact in $\cSM_\mu $.
Denote the closure of $\cV$ in $X_{\gamma,\mu}$ by $\overline{\cV}$.
It follows from a similar argument as in part (a) that
there exist a number $T_0>0$ and an open neighborhood  $\cU$ of $\overline{\cV}$ in $\cM_\mu $ such that for every $\tilde{z}_0\in \cU$, \eqref{nonlinear abstract equation} admits a unique solution $\tilde{z}\in \bE_{1,\mu}(J_{T_0})$. 
Moreover, the solution map $G_1:\cU\to \bE_{1,\mu}(J_{T_0})$ is continuous.
This implies  that, for any $t\in [0, T_+(z_0))$, the solution of \eqref{nonlinear abstract equation} with initial condition $z(t)$ exists on the  interval $[t,t+T_0]$, which further shows that $T_+(z_0)=\infty$.
Now it follows from Proposition~\ref{Prop:equivalence of 2 sys} that the solution to \eqref{magneto sys} is global.

As above, one sees that \eqref{eqn:Rewrit1} also defines a local semiflow on $\cSM_1$, equipped with the metric induced by $X_{\gamma,1}$.
It follows from the inequality
\begin{align*}
\|z(T_0)\|_{X_{\gamma,1}} & \leq  \|z\|_{C([T_0/2,T_0]; X_{\gamma,1})} \leq C(T_0)
\|z\|_{\bE_{1,1}([T_0/2,T_0])} \\
 & \leq C(T_0) (T_0/2)^{ \mu -1} \|z\|_{\bE_{1,\mu}(J_{T_0})}   
\end{align*}
that the map $G_2: \bE_{1,\mu}(J_{T_0}) \to X_{\gamma,1}: \, z \mapsto z(T_0)$  is continuous. This implies that the composition map $G=G_2\circ G_1: \cU\to X_{\gamma,1}: \, z  \mapsto G_1(z)(T_0)$ is continuous.
We thus infer that the orbit $\{z(t)\}_{t\geq T_0}$ is relatively compact in $\cSM_1$ because the continuous image  of a relatively compact set is again relatively compact.
Recall that the definition of $\omega$-limit set of \eqref{nonlinear abstract equation} is given by 
$$
\omega(z_0):=\{w\in X_{\gamma, 1}:\exists t_n\to \infty \text{ s.t. }\|z(t_n)-w\|_{X_{\gamma, 1}}=0 \text{ as }n\to \infty\}. 
$$
By \cite[Theorem~17.2]{Ama90}, $\omega(z_0)$ is nonempty, compact, connected in $\cSM_1$ and 
\begin{equation}
\label{converg omega limit set}
\lim_{t\to \infty}{\rm dist}_{X_{\gamma, 1}}(z(t), \omega(z_0))=0 .
\end{equation}
Now following a similar computation as in \cite[Proposition 4.1]{DSS23} we can show that  $-\sN$ is a strict Lyapunov functional for \eqref{eqn:Rewrit1}. Therefore,  
$\omega(z_0)\subset \cE$. Combining with \eqref{converg omega limit set}, this implies  $$\lim_{t\to \infty}{\rm dist}_{X_{\gamma, 1}}(z(t), \cE)= 0.$$
\end{proof}

Our last result is about the qualitative behavior of   solutions near constant equilibria. Consider the set of constant equilibria of \eqref{nonlinear abstract equation}:
$$\cE_c:=\{0\}\times\{0_3\}\times \bR_+\times \bR^3$$
which is a subset of $\cE$, the set of equilibria. 
Let $z_*\in\cE_c$ be given. Then one readily verifies that
\begin{equation}
\label{zero-derivatives}
([\sA^\prime(z_*)z]z_*,  \sF'(z_*)z, [\sB^\prime(z_*)z]z_*)=(0,0,0), \quad z\in X_1.
\end{equation}
%for each $z\in X_1$.
Therefore, the linearization of \eqref{nonlinear abstract equation} at $z_*\in \cE_c$ is given by 
 \begin{equation*}
  \begin{aligned}
    \sA_*& z:=\sA(z_*)z=
    \begin{bmatrix}
    	-\upmu(\theta_*)P_H \Delta u & 0 & 0 & 0 \\
    0 	& -\kappa(\theta_*)\Delta F& 0 & 0 \\
    0	& 0 & -K(z_*):\nabla^2 \theta & 0\\
    	0 & 0 & 0 & (\beta(\theta_*)\sM(m_*)-\alpha(\theta_*)I_3)\Delta m
    \end{bmatrix},
\\
\\
    \sB_* &z=  \nu\cdot {\rm tr}_{\partial\Omega} (K(z_*)\nabla \theta), \qquad z=(u,F,\theta, m).    
     \end{aligned}
  \label{}
\end{equation*}
Note that
$(\sA_*,\sB_*)\in\cL(X_1,  X_0 \times Y_1)$, where $Y_1= W^{1-1/p}_p(\partial\Omega).$
Hence, $A_0:=\sA_*|_{N(\sB_*)}$ is well-defined, where $N(\sB_*)$ is the null-space of $\sB_*$.

\medskip\noindent 
 The next result will be important for proving stability of constant equilibria.
%%%%%%%%%%%
\begin{proposition}
\label{normally-stable}
 Each constant equilibrium $z_*\in \cE_c$ is normally stable.
\end{proposition}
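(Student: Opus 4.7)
The plan is to verify the four hypotheses of normal stability (in the sense of \cite[Theorem~5.3.1]{PrSi16}):
(i) $\cE$ is a $C^1$-submanifold of $X_{\gamma,\mu}$ near $z_*$;
(ii) $T_{z_*}\cE=N(A_0)$;
(iii) $0$ is a semi-simple eigenvalue of $A_0$; and
(iv) $\sigma(A_0)\setminus\{0\}\subset\{\Rea\lambda>0\}$.
By \eqref{zero-derivatives}, the full linearization reduces to $A_0=\sA_*|_{N(\sB_*)}$, which is block-diagonal. I would then compute $N(A_0)$ block by block: the Stokes block with homogeneous Dirichlet data has trivial kernel, so $u=0$; the Dirichlet-Laplace block forces $F=0$; testing the temperature equation with $\theta$ and using the uniform positive definiteness of $K(z_*)$ gives $\nabla\theta=0$, so $\theta$ is constant; for the magnetic block, the constant matrix $\alpha(\theta_*)I_3-\beta(\theta_*)\sM(m_*)$ has spectrum $\{\alpha(\theta_*),\,\alpha(\theta_*)\pm i\beta(\theta_*)|m_*|\}$ and is therefore invertible, so the kernel equation reduces to $\Delta m=0$ with $\pa_\nu m=0$, whence $m$ is constant. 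Consequently, $N(A_0)=\{0\}\times\{0\}\times\bR\times\bR^3$ is $4$-dimensional and coincides with $T_{z_*}\cE_c$.

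To complete (i) and (ii), I would show that $\cE=\cE_c$ in some $X_{\gamma,\mu}$-neighborhood of $z_*$. The abstract equilibrium equations force $u=F=0$ and $\theta$ constant by the same dissipation arguments, reducing everything to the semilinear elliptic problem
\[
(\alpha(\theta)I_3-\beta(\theta)\sM(m))\Delta m+\alpha(\theta)|\nabla m|^2\,m=0,\qquad \pa_\nu m=0,
\]
for $m$ close to $m_*$. Writing $m=c+w$ with $c\in\bR^3$ constant and $\int_\Omega w\,dx=0$, the linearization at $w=0$ of the resulting equation for $w$ is $(\alpha(\theta_*)I_3-\beta(\theta_*)\sM(m_*))\Delta$, which by invertibility of the matrix together with elliptic regularity is an isomorphism from the mean-zero subspace of $\{w\in W^3_p(\Omega;\bR^3):\pa_\nu w=0\}$ onto the mean-zero subspace of $W^1_p(\Omega;\bR^3)$. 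The implicit function theorem then yields $w\equiv 0$ for every small $c\in\bR^3$, so the equilibrium set reduces locally to the $4$-parameter family $\cE_c$.

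For (iii) and (iv), I would exploit the block-diagonal structure and compactness of the resolvent in each block. The Stokes, $F$, and $\theta$ blocks have non-zero spectra contained in $(0,\infty)$, with $0$ either absent (Stokes and $F$) or a semi-simple eigenvalue whose kernel/range split is the standard decomposition into constants and mean-zero functions (the $\theta$-block, via the Fredholm alternative). For the magnetic block, simultaneous diagonalization in the Neumann-Laplacian eigenbasis $-\Delta\phi_j=\lambda_j\phi_j$ ($\lambda_j\geq 0$) and in the eigenbasis of the constant matrix $\alpha(\theta_*)I_3-\beta(\theta_*)\sM(m_*)$ gives a spectrum $\{\lambda_j\mu:\,j\geq 0,\ \mu\in\{\alpha(\theta_*),\alpha(\theta_*)\pm i\beta(\theta_*)|m_*|\}\}$; the $\lambda_0=0$ sector contributes exactly the three-dimensional kernel of constant vector fields, with complementary range equal to the mean-zero fields (semi-simplicity), while every other eigenvalue has real part $\alpha(\theta_*)\lambda_j>0$. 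The main technical obstacle is the rigorous implementation of the local identity $\cE=\cE_c$ via the implicit function theorem in the $L_p$-scale after factoring out the $3$-dimensional kernel of constant perturbations of $m$; once this reduction is in place, the remaining steps follow directly from the block structure of $A_0$ and the explicit diagonalization of the non-self-adjoint but computable magnetic matrix.
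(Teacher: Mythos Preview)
Your proposal is correct and reaches the same conclusion, but it takes a more explicit, block-by-block route than the paper. The paper exploits the block-diagonal structure of $\sA_*$ in one stroke: it tests $\sA_* z=\lambda z$ against $\bar z$ (using $\Rea(\sM(m_*)\Delta m\mid\bar m)_\Omega=0$) to obtain $\Rea\lambda\geq 0$ and to identify $N(A_0)=\{0\}\times\{0_3\}\times\bR\times\bR^3$; semi-simplicity is then verified via the compact-resolvent criterion $N(A_0)=N(A_0^2)$ by a two-line pairing argument. For (i), the paper does not carry out your implicit-function-theorem reduction at all: it simply observes that $\cE_c$ is a $4$-dimensional linear manifold of equilibria with $\dim\cE_c=\dim N(A_0)$ and then invokes \cite[Remark~2.2]{PSZ09}, which guarantees that under (ii)--(iv) any such manifold already exhausts the equilibria locally.

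Your approach is more constructive: you compute the spectrum of each block explicitly (diagonalizing the magnetic block in the Neumann--Laplace eigenbasis) and you establish $\cE=\cE_c$ directly by first integrating the stationary $\theta$-equation to force $u=F=0$, $\theta$ constant, and then applying the implicit function theorem to the residual $m$-equation on the mean-zero complement. This is perfectly valid and yields more information (the full spectrum rather than just its location in the right half-plane), at the cost of a longer argument. The paper's route is shorter because the abstract remark from \cite{PSZ09} absorbs precisely the IFT step you identify as the ``main technical obstacle.'' One minor caution: your diagonalization of the magnetic block tacitly uses that the Neumann eigenfunctions diagonalize the $L_p$-realization as well; this is fine after noting $p$-independence of the spectrum, but the paper's energy-testing argument avoids this altogether.
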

 %%%%%%%%%%%%%%
 
\begin{proof}
By definition of normal stability, we need to show that
\begin{enumerate}[label={\rm(\roman*)}]
\item near $z_*$, $\cE$ is a $C^1$-manifold in $X_1$ of finite dimension,
\item the tangent space of $\cE$ at $z_*$ is isomorphic to $N( A_0)$,
\item $0$ is a semi-simple eigenvalue of $ A_0$; i.e. $X_0 =N(A_0)\oplus R(A_0)$,
\item $\sigma ( A_0)\setminus \{0\}\subset \{z\in \bC: {\rm Re}z>0\}$.
\end{enumerate}
We immediately see that (i) is satisfied, as $\cE_c$ is a linear space of dimension $4$. 

\medskip\noindent
Suppose $z=(u, F, \theta, m)$ is an eigenvector of $A_0$ subject to an eigenvalue $\lambda\in \bC$, i.e., $A_0z=\lambda z$. In other words, $\sB_* z=0$ on $\pa \Omega$, and $\sA_* z=\lambda z$ in $\Omega$. Taking the inner product of the later identity with $\bar{z}$ and using integration by parts we can derive
  \begin{align*}
    {\rm Re }\,\lambda \|z\|_2^2&=\upmu (\theta_*)\|\nabla u\|_2^2+\kappa(\theta_*)\|\nabla F\|_2^2
    + {\rm Re }\, (K(z_*)\nabla \theta|\nabla \bar{\theta})_\Omega +\alpha(\theta_*)\|\nabla m\|_2^2\\
    &\ge \underline{\upmu}\|\nabla u\|_2^2+\underline{\kappa}\|\nabla F\|_2^2+c\|\nabla \theta\|_2^2+\underline{\alpha}\|\nabla m\|_2^2, 
  \end{align*}
where we use assumption~\eqref{assumption} and the fact that ${\rm Re}\,  (\sM(m_*)\Delta m|\bar{m})_\Omega =0$ (see \cite[Section 3]{DSS23}).
  Hence ${\rm Re}\, \lambda\ge0$. Furthermore, when ${\rm Re}\, \lambda=0$, we get that $z\in \{0\}\times\{0_3\}\times\bR\times\bR^3$, thus  $\sigma(A_0)\cap i\bR=\{0\}$ and $N(A_0)= \{0\}\times\{0_3\}\times\bR\times\bR^3$. 
 This shows that (ii) and (iiii) hold true.
 
\medskip\noindent
 Finally, we show that $\{0\}$ is a semi-simple.  Since $A_0$ has compact resolvent, it suffices to show that $N(A_0)=N(A_0^2)$. Since $N(A_0)\subset N(A_0^2)$, we just need to show $N(A_0^2)\subset N(A_0)$. For $w=(v, J, \vartheta, n )\in N(A_0^2)$, let $z=(0, 0, \theta, m)\in N(A_0)$ such that $A_0w=z$. Then we can compute
  \begin{align*}
    \|z\|_2^2&=(\sA_0 w|z)_\Omega
    =(-K(z_*):\nabla^2\vartheta|\theta)_\Omega+( (\beta(\theta_*)\sM(m_*)-\alpha(\theta_*)I_3)\Delta n|m)_\Omega=0, 
  \end{align*}
where we use the fact that $\theta, m$ are constants in $N(A_0)$ and $\sB_* w=0$ on $\pa\Omega$. Hence, $A_0 w=z=0$ and $w\in N(A_0)$. This yields that $\{0\}$ is semi-simple. 

 Finally, it follows from \cite[Remark 2.2]{PSZ09}, that all equilibria near $z_*$ are contained in a $C^1$ manifold of dimension 4. 
\end{proof}

By adapting the proof of {\em the generalized  principle of linearized stability} provided in  \cite[Section 5.3]{PrSi16}, we can obtain the following stability property of $\cE_c$. 

%%%%%%%%%%%%%%%
\begin{theorem}
\label{thm:stability}
Assume  \eqref{assumption}   and \eqref{indices cond}.
Then each equilibrium $z_*\in \cE_c$ is stable in $X_{\gamma, \mu}$. Moreover, there exists $\delta>0$ such that if $\|z_0-z_*\|_{X_{\gamma, \mu}}\le \delta$, then the solution $z$ of \eqref{eqn:Rewrit1} with initial value $z_0$ exists globally and converges to some $z_\infty \in \cE_c$ at an exponential rate in $X_{\gamma, 1}$. 
\end{theorem}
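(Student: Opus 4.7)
The strategy is to adapt the generalized principle of linearized stability from \cite[Theorem 5.3.1]{PrSi16} to accommodate the nonlinear boundary condition $\sB(z)z=0$. The main input is Proposition~\ref{normally-stable}: the constant equilibrium $z_*$ is normally stable, so that near $z_*$ the set $\cE_c$ is a $4$-dimensional $C^1$-manifold with tangent space $N(A_0)$, the spectrum satisfies $\sigma(A_0)\setminus\{0\}\subset\{{\rm Re}\,\lambda\geq \omega_0\}$ for some $\omega_0>0$, and $X_0=N(A_0)\oplus R(A_0)$ with an associated finite-rank spectral projection $P^c$. A second crucial structural fact is \eqref{zero-derivatives}: at $z_*\in\cE_c$ one has $[\sA^\prime(z_*)z]z_*=0$, $\sF^\prime(z_*)z=0$ and $[\sB^\prime(z_*)z]z_*=0$ for all $z\in X_1$, so the linearization around $z_*$ reduces to the autonomous pair $(\sA_*,\sB_*)$ and all quadratic and higher-order corrections behave as superlinear remainders.

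The plan is to introduce exponentially weighted function spaces $e^{\omega\cdot}\bE_{0,\mu}(\bR_+)$, $e^{\omega\cdot}\bE_{1,\mu}(\bR_+)$, $e^{\omega\cdot}\bF_\mu(\bR_+)$ with $0<\omega<\omega_0$, and split the perturbation $z-z_*=y+v$ with $y:=P^c(z-z_*)\in N(A_0)$ (tangential along $\cE_c$) and $v:=(I-P^c)(z-z_*)\in R(A_0)$ (transverse). Since $A_0$ restricted to $R(A_0)$ has spectrum in $\{{\rm Re}\,\lambda\geq \omega_0\}$, the maximal $L_p$-regularity of Propositions~\ref{Pro: MR linear} and \ref{Prop: linearized abstract pb} transfers to the exponentially weighted scale on $R(A_0)$ without loss.

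The principal obstacle is the nonlinear boundary condition, which makes $\cSM_\mu$ a nonlinear manifold rather than a linear subspace; this prevents a direct application of the standard principle of linearized stability. To overcome it, I would mirror the right-inverse construction from the proof of Theorem~\ref{Thm:wellposed abstract}(b) and recast the problem as
\begin{equation*}
\pa_t v + A_0 v = (I-P^c)\bF(y+v), \quad \sB_* v = \bG(y+v),\quad v(0)=(I-P^c)(z_0-z_*)+\cR(z_*)\bG(v(0)),
\end{equation*}
coupled with the finite-dimensional flow $\dot y = P^c\bF(y+v)$, $y(0)=P^c(z_0-z_*)$, where $\bF$ and $\bG$ collect the superlinear remainders of $-\cA(z_*+\cdot)+\sF(z_*+\cdot)$ and $-\cB(z_*+\cdot)$ respectively, and satisfy $\bF(0)=0$, $\bF^\prime(0)=0$, $\bG(0)=0$, $\bG^\prime(0)=0$ thanks to \eqref{zero-derivatives}. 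The mapping properties established in Proposition~\ref{Prop: mapping properties} (via Appendix~B) and Lemma~\ref{lem:right inverse} make this formulation meaningful in the weighted spaces. A contraction argument on a small ball in $e^{\omega\cdot}\bE_{1,\mu}(\bR_+)$ produces a unique global $v$ with
$\|e^{\omega\cdot} v\|_{\bE_{1,\mu}(\bR_+)} \leq C\|z_0-z_*\|_{X_{\gamma,\mu}}$,
while the $y$-equation, driven by terms at least quadratic in $(y,v)$, forces $y(t)\to y_\infty$ at an exponential rate.

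Setting $z_\infty:=z_*+y_\infty\in\cE_c$, the weighted estimate together with Lemma~\ref{lem:B-mu}(a) and the embedding $\bE_{1,\mu}\hookrightarrow C([\tau,\infty);X_{\gamma,1})$ for any $\tau>0$ yields $\|z(t)-z_\infty\|_{X_{\gamma,1}}\leq C e^{-\omega t}\|z_0-z_*\|_{X_{\gamma,\mu}}$. Finally, stability in $X_{\gamma,\mu}$ follows by combining the exponential decay on $[t_0,\infty)$ (for any fixed $t_0>0$) with the Lipschitz continuity of the local semiflow on $[0,t_0]$ supplied by Theorem~\ref{thm:MainLocal}(b).
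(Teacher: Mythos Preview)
Your proposal is correct and follows essentially the same route as the paper: adapt \cite[Theorem~5.3.1]{PrSi16} via Proposition~\ref{normally-stable}, the spectral decomposition $X_0=N(A_0)\oplus R(A_0)$, exponentially weighted maximal regularity on the stable subspace, and the structural fact \eqref{zero-derivatives} that renders all remainders superlinear. The paper implements this through a continuation argument on finite intervals (defining $t_0=\sup\{t:\|\sx(\tau)\|,\|\sy(\tau)\|\le r\}$ and showing $t_0=T_+$) rather than a direct contraction on $\bR_+$, and it handles the initial compatibility via the normal form $\sB_*\sy=S(\sx,\sy)$ rather than through the right inverse $\cR(z_*)$; these are tactical variants of the same scheme.

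The one place where you are too optimistic is in asserting that Proposition~\ref{Prop: mapping properties} already ``makes this formulation meaningful in the weighted spaces.'' That proposition gives estimates in $\bF_\mu(J_T)$ without the exponential weight; obtaining
\[
\|e_\omega S(\sx,\sy)\|_{\bF_\mu(J_T)}\le \Phi(r)\,\|e_\omega \sy\|_{\bE_{1,\mu}(J_T)}
\]
is the technical heart of the paper's proof and requires separate work, because the fractional-in-time seminorm in $\bF_\mu$ does not commute with multiplication by $e^{\omega t}$ in any obvious way. The paper supplies Lemma~\ref{lem: equivalent seminorm} (localizing the seminorm to the strip $0<t-s<1$) and a careful splitting of $\wK(\bar z(t))-\wK(\bar z(s))$ into $\sx$- and $\sy$-increments, using that $\sx$ lives in the finite-dimensional space $X^c$ and is controlled through $\dot\sx=T(\sx,\sy)$ by $\|e_\omega\sy\|_{X_1}$. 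Without this ingredient your contraction on $e^{\omega\cdot}\bE_{1,\mu}(\bR_+)$ would not close.
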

%%%%%%%%%
\begin{proof}
It will be convenient to center \eqref{nonlinear abstract equation} 
around $z_*$, by setting $\bar{z}=z-z_*$. 
Then \eqref{nonlinear abstract equation}  can be rewritten as
\begin{equation}
   \label{abstract eq neaar equlibrium}
  \left\{ 
   \begin{aligned}
      \pa_t \bar{z}+\sA_* \bar{z}&=G(\bar{z})&&\text{in}&&\Omega, \\
      \sB_*  \bar{z} &= H(\bar{z}) &&\text{on}&&\pa\Omega, \\
      \bar{z}(0)&=\bar{z}_0=z_0 -z_* &&\text{in}&&\Omega,
    \end{aligned}
    \right.
  \end{equation}
where
\begin{equation*}
\begin{aligned}
G(\bar{z})
& = -\big(\sA(z_*+\bar z)(z_*+\bar z) - \sA(z_*)\bar z\big) + \sF(z_*+\bar z) \\
& = -\big((\sA(z_*+\bar z)-\sA(z_*))(z_*+\bar z) - [\sA^\prime(z_*)\bar z]z_*\big) + \sF(z_*+\bar z)- \sF(z_*) - \sF'(z_*)\bar z \\
H(\bar{z})
& = -\big(\sB(z_* + \bar z)(z_* +\bar z) - \sB(z_*) \bar z\big) \\
& = -\big((\sB(z_*+\bar z)-\sB(z_*))(z_* +\bar z)- [\sB^\prime(z_*)z]z_*\big).
\end{aligned}
\end{equation*}
Here we used \eqref{zero-derivatives} and the relations $(\sA(z_*)z_*, \sF(z_*), \sB(z_*)z_*)=(0,0,0)$ 
for the second line in the expressions of $G(\bar z)$ and $H(\bar z)$, respectively.

Theorem~\ref{thm:MainLocal} shows that \eqref{abstract eq neaar equlibrium} has a unique solution $\bar{z}$ on some maximal  interval of existence $[0,T_+)$. 

In the following, we use the notation 
$$X^c_0=N(A_0)=\{0\}\times\{0_3\}\times \bR_+\times \bR^3, \quad X_0^s = R(A_0).$$
We know from Proposition~\ref{normally-stable} 
that $ X_0= X_0^c\oplus X_0^s.$
Let $\sP^c$ be the projection from $X_0$ onto $X_0^c$, and $\sP^s$ the projection onto $X_0^s$.
Then we set $X_j^s=\sP^s X_j$, $j\in \{0,1, (\gamma,\mu)\}$. 
We point out  that $X_j^s= X_j\cap X_0^s$ and $\sP^c X_j\doteq X^c$. 
Therefore, in the sequel, we will simply be using $X^c$, equipped with the norm induced by $X_0$.
As $X^c$ is finite dimensional, the projections $\sP^c$ and $\sP^s$ also provide the direct decomposition
$X_j= X^c \oplus X_j^s$.

Following the arguments in parts (b) and (c) of the proof of \cite[Theorem~5.3.1]{PrSi16}, near $z_*$, we decompose $\bar{z}$  as
\begin{align*}
\bar z= \sx +\sy := \sP^c\bar{z} + \sP^s \bar{z}.
\end{align*}
Based on these notations, we define the normal form of \eqref{abstract eq neaar equlibrium}  as
\begin{equation}
    \label{normal form}
  \left\{ 
    \begin{aligned}
      \pa_t \sx &=T(\sx,\sy) &&\text{in}&&\Omega, \\
     \pa_t \sy + \sP^s \sA_* \sP^s \sy &=R(\sx,\sy) &&\text{in}&&\Omega, \\
      \sB_* \sy&=S(\sx,\sy) &&\text{on}&&\pa\Omega, \\
      \sx(0)&=\sx_0 , \quad \sy(0)=\sy_0 &&\text{in}&&\Omega .
    \end{aligned}
    \right.
  \end{equation}
Here $\sx_0=\sP^c \bar{z}_0$ and $\sy_0=\sP^s \bar{z}_0$ and
\begin{align*}
T(\sx,\sy) &= \sP^c \left( G(\sx+ \sy)-  G(\sx) \right) - \sP^c \sA_* \sy \\
R(\sx,\sy) &= \sP^s \left( G(\sx +\sy)-  G(\sx ) \right)  \\
S(\sx,\sy) &= H(\sx +\sy)-  H(\sx )  .
\end{align*}
We note that $G(\sx)=0$ for $\sx\in X^c$.  We,  nevertheless, include this term for reasons of consistency.
It is clear that
$T(\sx,0)=R(\sx,0)=S(\sx,0)=0$.

\medskip\noindent
Before proceeding with the proof, we list a result for a linear version of equation \eqref{normal form} that will be needed in the sequel.
It reads as follows.  
%%%%%%%%%%%%
\begin{lemma}\label{lem: stable part}
Let $T>0$.  Then the linear problem
\begin{equation*}
    \begin{aligned}
     \pa_t y + \sP^s \sA_* \sP^s y &={\sf f}(t) &&\text{in}&&\Omega, \\
      \sB_* y&={\sf g}(t) &&\text{on}&&\pa\Omega, \\
      y(0)&=y_0    &&\text{in}&&\Omega  
    \end{aligned}
    \label{normal form stable}
  \end{equation*}
admits for each initial value  $y_0=(u_0,F_0,\theta_0,m_0)\in X_{\gamma,\mu}^s$ 
and each function  
$$({\sf f},{\sf g})\in L_{p,\mu}(J_T;X_0^s) \times \bF_\mu(J_T)$$ 
satisfying the compatibility condition
$
\sB_* y_0= {\sf g}(0)
$
a unique solution 
$$
y\in W^1_{p,\mu}(J_T; X_0^s)\cap L_{p,\mu}(J_T;X_1^s).
$$
Moreover,  there exists a constant $M_0$, which is independent of $T \in (0,\infty)$, such that
$$
\|y\|_{\bE_{1,\mu}(J_T)} \leq M_0 \left( \|{\sf f}\|_{\bE_{0,\mu}(J_T)}+  \|{\sf g}\|_{\bF_\mu(J_T)} + \|y_0\|_{X_{\gamma,\mu}} \right).
$$
\end{lemma}
\begin{proof}
The proof can be reproduced line by line by following the proof of \cite[Proposition~3.3]{PSZ09}.
\end{proof}

\medskip\noindent
We will now continue with the proof of Theorem~\ref{thm:stability}.  
Suppose that 
\begin{equation}
\label{initial value assumption}
\sx_0\in B_{X^c}(0,\delta) \quad \text{and} \quad \sy_0\in B_{X^s_{\gamma,\mu}}(0,\delta)
\end{equation}
for a number $\delta>0$ to be determined later. 
We already know that \eqref{abstract eq neaar equlibrium} has a   solution  $\bar{z} $ with initial value $\bar{z}_0=\sx_0+\sy_0$ on  maximal interval of existence $[0,T_+)$, or equivalently,  \eqref{normal form} has a solution $(\sx,\sy)$ on $[0,T_+)$.

As in \cite[Theorem~5.3.1]{PrSi16}, we can show that there exists some constant $C_1 >0$ such that
\begin{align}
\begin{split}\label{est T&R}
\|T(\sx,\sy)\|_{X_0} & \leq C_1 \|\sy\|_{X_1}, \\
\|R(\sx,\sy)\|_{X_0} & \leq  \Phi(r) \|\sy\|_{X_1} 
\end{split}
\end{align}
for all $\sx\in  B_{X^c}(0,r)$ and $\sy\in  B_{X^s_{\gamma,\mu}}(0,r) \cap X_1$ with sufficiently small $r>0$.
We recall here that 
 $\Phi(r)\to 0^+$ as $r\to 0^+$.  
Define
$$
\omega_0 = \frac{1}{2}\inf\{{\rm Re} \lambda: \lambda\in \sigma(A_*)\setminus \{0\}\}.
$$
For any $\omega\in (0,\omega_0)$, we define the map $e_\omega: L_{1,loc}(\bR_+) \to  L_{1,loc}(\bR_+): u  \mapsto e^{\omega t} u(t)$. 
For arbitrary $T\in (0,T_+)$,  we will establish an estimate of the form
\begin{align}\label{est H}
\| e_\omega S(\sx,\sy)  \|_{\bF_\mu(J_T)} \leq \Phi(r)\| e_\omega \sy\|_{\bE_{1,\mu}(J_T)} , 
\end{align}
whenever $\|\sx(t)\|_{X_0}, \|(\sx+\sy)(t)\|_{X_{\gamma,\mu} } \leq r$, $t\in [0,T ]$.
Let
$
\wK (z)=  K(z_*+ z)  -  K(z_*) .
$
 For $z_i\in X_{\gamma,\mu}$, $i=1,2$, we have the estimates
\begin{align}
\label{est f}
\begin{split}
\|\wK(z_1)\|_{W^{2\mu-2/p}_p(\Omega)}   & \leq  \Phi(\|z_*\|_{X_{\gamma,\mu}}+ \|z_1\|_{X_{\gamma,\mu}})   \|z_1 \|_{W^{2\mu-2/p}_p(\Omega)},    \\
|(\wK(z_1)-\wK(z_2)) (x)| & \leq \Phi(\|z_*\|_{X_{\gamma,\mu}}+ \sum\limits_{i=1,2}\|z_i\|_{X_{\gamma,\mu}} )  |(z_1-z_2)(x)  |  ,\quad x\in \overline{\Omega}, 
\end{split}
\end{align}
in view of Lemma~\ref{lem: Nemyskii} and a mean value theorem argument as in Lemma~\ref{lem:mapping property-linearization}.

We set $\sx=(u_1,F_1,\theta_1,m_1) $ and $\sy=(u_2,F_2,\theta_2,m_2)$.
It holds that
\begin{equation*}
 S(\sx,\sy) = \nu\cdot {\rm tr}_{\partial_\Omega}(\wK( \sx+\sy)  \nabla \theta_2).
\end{equation*}
Note that, in the above computations, we have used the fact that $\nabla \theta_1=0$, 
which follows from the fact that $\sx\in X^c=\{0\}\times \{0_3\}\times \bR\times \bR^3$.
%Note that the constant $C_3$ cannot be made arbitrarily small by shrinking $r$. 
We   start with the $L_{p,\mu}$-estimate, which reads
\begin{align}
\notag & \|e_\omega S(\sx,\sy)\|_{L_{p,\mu}(J_T;L_p(\partial\Omega))} \\
\label{normal form eq0}
& \leq   \Phi(r) \| e_\omega  {\rm tr}_{\partial\Omega } (\nabla \theta_2 ) \|_{L_{p,\mu}(J_T;L_p(\partial\Omega))}  \leq   \Phi(r) \|e_\omega  \sy\|_{\bE_{1,\mu}(J_T )}.
\end{align}
In \eqref{normal form eq0}, we have used \eqref{est f} and the assumption   $\|\sx+\sy\|_{X_{\gamma,\mu}}\leq r$.
$\|e_\omega S(\sx,\sy)\|_{L_{p,\mu}(J_T; W^{1-1/p}_p(\partial\Omega))}$ can be estimated   in a similar way by observing that $W^{1-1/p}_p(\partial\Omega)$ is a Banach algebra.
Let  $r=1/2-1/2p$.  It follows from Lemma~\ref{lem: equivalent seminorm}  that 
\begin{align}
\notag & [ e_\omega  S(\sx,\sy)]_{W^r_{p,\mu}(J_T;L_p(\partial\Omega))}^p \\
\notag & \leq  C\| e_\omega S(\sx,\sy)\|_{L_{p,\mu}(J_T;L_p(\partial\Omega))}^p\\
\label{normal form eq1}   & \quad + C    \iint_{B_T^1} \frac{ \left\| s^{1-\mu} e^{\omega s }  [ \wK (\bar{z}(t)) - \wK  (\bar{z}(s))]   \nabla  \theta_2  (t) \right\|_{L_p(\partial\Omega)}^p}{(t-s)^{1+r p}} \,  ds \,  dt \\
\label{normal form eq2} &  \quad + C    \iint_{B_T^1}  \!  \frac{ \left\| s^{1-\mu} e^{\omega s } \wK  (\bar{z}(s)) \nabla \left[   \theta_2  (t)-   \theta_2  (s)  \right]   \right\|_{L_p(\partial\Omega)}^p}{(t-s)^{1+r p}}  \, ds \,  dt  .
\end{align}
To estimate \eqref{normal form eq1}, we recall that $X^c_0\doteq X^c_1$; and  observe that for 
$$(s,t)\in B_T^1=\{(s,t)\in (0,T)^2: 0<t-s<1\},$$
we obtain
 \begin{align}
\notag
 \frac{\| s^{1-\mu} e^{\omega s} (\sx(t)-\sx(s))\|_{L_p(\partial\Omega)}^p}{(t-s)^{1+\alpha p }} 
\notag  & \leq   C \frac{\| s^{1-\mu} e^{\omega s} (\sx(t)-\sx(s))\|_{X_0}^p}{(t-s)^{1+\alpha p }}  \\
\notag   & \leq \frac{C  }{(t-s)^{1+\alpha p }}\left(\int_s^t   \tau^{1-\mu} e^{\omega \tau } \|T(\sx,\sy)(\tau)\|_{X_0}\, d\tau \right)^p \\
\label{normal form est x1}
& \leq \frac{C  }{(t-s)^{1+\alpha p }} \left( \int_s^t \tau^{1-\mu}    \|e_\omega \sy(\tau)\|_{X_1}\, d\tau \right)^p \\
\label{normal form est x2}
& \le  C (t-s)^\beta \left(\int_s^{ (s+1)\wedge T  }  \tau^{(1-\mu)p} \|e_\omega \sy (\tau)\|_{X_1}^p \, d\tau \right),
\end{align}
for some constant $C$ which is independent of $T \in [0,T_+)$, where $\beta=(1-\alpha)p-2 >0$ due to \eqref{indices cond},
and $(s+1)\wedge T:=\min\{s+1,T\}$.
We have used \eqref{est T&R} in \eqref{normal form est x1} and H\"older's inequality in \eqref{normal form est x2}.
Observe that
\begin{align}
\notag  & \iint_{B_T^1}  (t-s)^\beta  \left(\int_s^{(s+1)\wedge T} \tau^{(1-\mu)p} \|e_\omega \sy (\tau)\|_{X_1}^p \, d\tau \right) \, ds \,dt  \\
\notag &= \int_0^T   \int_s^{ (s+1)\wedge T }   (t-s)^\beta \left(\int_s^{ (s+1)\wedge T} \tau^{(1-\mu)p} \|e_\omega \sy (\tau)\|_{X_1}^p \, d\tau \right)\, dt \,ds  \\
\notag %\label{double ing 1}
& \le  \left( \int_0^T  \int_s^{(s+1)\wedge T} \tau^{(1-\mu)p} \|e_\omega \sy (\tau)\|_{X_1}^p \, d\tau  \, ds  \right) \left( \int_0^1  t^\beta \, dt \right)   \\
\notag 
& = \left( \int_0^T  \tau^{(1-\mu)p} \|e_\omega \sy (\tau)\|_{X_1}^p \, d\tau  \int_{(\tau-1)\vee 0}^\tau  \, ds  \right) \left( \int_0^1  t^\beta \, dt \right)   \\
\label{double ing 2}
& \le \frac{1}{\beta +1}  \| e_\omega \sy  \|_{\bE_{1,\mu}(J_T)}^p,
\end{align}
 where $(\tau -1)\vee 0=\max\{\tau-1, 0\}$.
Employing  \eqref{est f},  \eqref{double ing 2} and Lemma~\ref{lem: equivalent seminorm}, we have 
\begin{align*}
& \iint_{B_T^1}  \frac{ \left\| s^{1-\mu} e^{\omega s }  [\wK (\bar{z}(t)) - \wK (\bar{z}(s))]   \nabla   \theta_2   (t) \right\|_{L_p(\partial\Omega)}^p}{(t-s)^{1+r p}} \,  ds \,  dt \\
& \le   \Phi(r)  \iint_{B_T^1}  \frac{ \left\| s^{1-\mu} e^{\omega s }  ( \sx(t)  - \sx(s))      \right\|_{L_p(\partial\Omega)}^p}{(t-s)^{1+r p}} \,  ds \,  dt 
    + \Phi(r)  \iint_{B_T^1}  \frac{ \left\| s^{1-\mu} e^{\omega s }  ( \sy(t)  - \sy(s))      \right\|_{L_p(\partial\Omega)}^p}{(t-s)^{1+r p}} \,  ds \,  dt \\
& \leq \Phi(r) \|e_\omega \sy\|_{\bE_{1,\mu}(J_T)}^p,
\end{align*}
where we have used the fact that 
$
\|   \nabla   \theta_2   (t)  \|_\infty  \le C \| \sy(t)\|_{X_{\gamma,\mu}} \le Cr.
$
The estimate for \eqref{normal form eq2} is a direct consequence of \eqref{est f}:
\begin{align*}
& \iint_{B_T^1}  \frac{ \left\| s^{1-\mu} e^{\omega s } \wK (\bar{z}(s)) \nabla \left[   \theta_2   (t)-    \theta_2   (s)  \right]   \right\|_{L_p(\partial\Omega)}^p}{(t-s)^{1+r p}}  \, ds \,  dt  \\
& \leq \Phi(r)
\iint_{B_T^1}   \frac{ \left\| s^{1-\mu} e^{\omega s }   \nabla \left[    \theta_2   (t)-    \theta_2  (s)  \right]   \right\|_{L_p(\partial\Omega)}^p}{(t-s)^{1+r p}}  \, ds \,  dt .
\end{align*}
Summarizing the above discussion and applying \eqref{normal form eq0} and Lemma~\ref{lem: equivalent seminorm}, we have
\begin{align*}
  [ e_\omega  S(\sx,\sy)]_{W^r_{p,\mu}(J_T;L_p(\partial\Omega))}^p  
& \leq  \Phi(r) \|e_\omega \sy\|_{\bE_{1,\mu}(J_T)}^p+ \Phi(r)   \|e_\omega \nabla \theta_2\|_{W^r_{p,\mu}(J_T;L_p(\partial\Omega))}^p  \\
& \leq \Phi(r)   \|e_\omega \sy\|_{\bE_{1,\mu}(J_T)}^p.
\end{align*}
In the last step, we have used the embedding
$$
W^{1-1/2p}_{p,\mu}(J_T;L_p(\partial\Omega))\cap L_{p,\mu}(J_T; W^{2-1/p}_p (\partial\Omega))   \hookrightarrow W^r_{p,\mu}(J_T;W^1_p(\partial\Omega)),
$$
see  for instance \cite[Proposition~3.2]{MeSc12}, and \cite[Theorem 4.5]{MeSc12}.
This yields  \eqref{est H}.

Fix $r>0$ so that estimates \eqref{est T&R} and \eqref{est H} hold. 
We put
$$
t_0 = \sup\{t\in (0,T_+): \, \|\sx(\tau)\|_{X_0} , \|\sy(\tau)\|_{X_{\gamma,\mu}} \leq r, \, \tau \in [0,t] \}.
$$
Assume that $t_0<T_+$. Then Lemma~\ref{lem: stable part} implies 
\begin{align*}
\| e_\omega \sy \|_{\bE_{1,\mu}(J_{t_0})} & \leq M_0  \left(  \|\sy_0\|_{X_{\gamma,\mu}} + \| e_\omega S(\sx,\sy)\|_{\bF_\mu(J_{t_0})} + \| e_\omega R(\sx,\sy)\|_{\bE_{0,\mu}(J_{t_0})} \right)  \\
&  \leq M_0 \|\sy_0\|_{X_{\gamma,\mu}} + \Phi(r)  \| e_\omega \sy \|_{\bE_{1,\mu}(J_{t_0})}  .
\end{align*}
Choosing $r>0$ sufficiently small so that $\Phi(r)<1/2$ yields
$$
\| e_\omega \sy \|_{\bE_{1,\mu}(J_{t_0})} \leq 2 M_0 \|\sy_0\|_{X_{\gamma,\mu}} , 
$$
which further implies that
$$
\|e_\omega \sy \|_{C([0,t_0];X_{\gamma,\mu})} \leq M_1 \|\sy_0\|_{X_{\gamma,\mu}} .
$$
We can derive an estimate for $\sx$ by using \eqref{normal form} and \eqref{est T&R}:
\begin{align*}
\|\sx(t)\|_{X_0} & \leq \|\sx_0\|_{X_0} + \int_0^t \|T(\sx,\sy)(\tau)\|_{X_0}\, d\tau \\
& \leq \|\sx_0\|_{X_0} + C_1 \int_0^t e^{-\omega \tau} \tau^{\mu-1} \| \tau^{1-\mu} e^{\omega \tau} \sy(\tau)\|_{X_1}\, d\tau \\
& \leq \|\sx_0\|_{X_0} + C \| e_\omega \sy \|_{\bE_{1,\mu}(J_{t_0})}  \leq \|\sx_0\|_{X_0} + M_2 \|\sy_0\|_{X_{\gamma,\mu}} .
\end{align*}
In the last line we employed   H\"older's inequality  and  $\mu>1/p$.
By choosing $\delta< r/2(1+M_1+M_2)$,  where $\delta$ was introduced in~\eqref{initial value assumption}, we have for all 
$\sx_0\in B_{X^c}(0,\delta)$ and $\sy_0\in B_{X^s_{\gamma,\mu}}(0,\delta)$ 
and all
$ t\in [0,t_0)$
$$
\|\sx(t)\|_{X_0} + \|\sy(t)\|_{X_{\gamma,\mu}} \leq  \|\sx_0\|_{X_0} + (M_1 +M_2)  \|\sy_0\|_{X_{\gamma,\mu}} \leq r/2,
$$
a contradiction to the definition of $t_0$. Therefore, $t_0=T_+$. With the choice $\delta< r/2(1+M_1+M_2)$, 
the above discussion shows that there exists a constant $M_3>0$ such that for any   $t_1\in (0,T_+)$,  
$$
\|\bar{z}\|_{C([0,t_1]; X_{\gamma,\mu})} + \|\bar{z}\|_{\bE_{1,\mu}(J_{t_1})} \leq M_3.
$$

Let $\tau\in (0, t_1)$ be fixed and let $t$ be any number in $[\tau, t_1]$.
Then we have
\begin{equation*}
\|\bar z(t)\|_{X_{\gamma,1}}\le 
\sup_{s\in [\tau, t_1]}\|\bar z(s)\|_{X_{\gamma,1}}\le C(\tau) \| \bar z\|_{\bE_{1,1}([\tau,t_1])}
\le C(\tau)\tau^{\mu-1}\| \bar z\|_{\bE_{1,\mu}([\tau,t_1])
}\le C(\tau)\tau^{\mu-1} M_3.
\end{equation*}
This implies that $\bar{z}\in BC([\tau,T_+),X_{\gamma,1})$.
Theorem~\ref{thm:global existence} then implies that $T_+=\infty$. The rest of the proof is exactly the same as part (f) of the proof of  \cite[Theorem~5.3.1]{PrSi16}.
\end{proof}

\appendix
%%%%%%%%%%%%%%%%%%%
{\small  
\medskip
\section{Properties of fractional Sobolev spaces with temporal weights}

In this section, we establish some properties of fractional Sobolev spaces with temporal weights.
The results are employed in obtaining estimates for nonlinear mappings, but are also of independent interest.
We recall that
$$
\prescript{}{0}{W}^r_{p,\bar{\mu}}(J_T; X):=\{u\in W^r_{p,\bar{\mu}}(J_T; X) : \, \gamma_0 u=0  \},
$$
where $r,\bar{\mu}\in (1/p,1]$ with $r+\bar{\mu}>1+1/p$,  and  $X$ is a Banach space. 
See \cite[Proposition 2.10]{MeSc12}.

%%%%%%%%%%%%%%%%%
\begin{lemma}
\label{lem: time weight equi norm}
Let $X$ be a Banach space.
Given    $r,\mu\in (1/p,1]$  such that $r+\mu>1+1/p$,  there exists a constant $C>0$, 
which is independent of $T\in (0,\infty]$, such that
\begin{align*}
\left( \int_0^T t^{(1-\mu-r)p} \|u(t)\|_X^p \, dt \right)^{1/p} \leq C  \|u\|_{W^r_{p,\mu}(J_T;X)}
\end{align*}
for all $u\in \zW^r_{p,\mu}(J_T;X)$, where $\zW^r_{p,\mu}(J_T;X)$ is equipped with the intrinsic norm.
\end{lemma}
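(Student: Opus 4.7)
The plan is to establish a Hardy-type inequality by combining the triangle inequality with an integration against the same weight that appears in the seminorm $[u]_{W^r_{p,\mu}(J_T;X)}$, exploiting the vanishing trace $\gamma_0 u = 0$. The strategy is: prove the estimate first for smooth representatives where the left-hand side is manifestly finite, then extend to $\zW^r_{p,\mu}(J_T;X)$ by a Fatou argument.

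For the smooth case, I would start from the elementary inequality
\[
\|u(t)\|_X^p \le 2^{p-1}\bigl(\|u(t)-u(\tau)\|_X^p+\|u(\tau)\|_X^p\bigr),
\]
multiply by $\tau^{(1-\mu)p}(t-\tau)^{-rp-1}$ and integrate in $\tau\in(0,\epsilon t)$ for a small parameter $\epsilon>0$ (to be fixed later, independent of $T$). A change of variables $\tau = ts$ shows that the left-hand side equals $c(\epsilon)\, t^{(1-\mu-r)p}\|u(t)\|_X^p$ with $c(\epsilon):=\int_0^\epsilon (1-s)^{-rp-1} s^{(1-\mu)p}\,ds$. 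Integrating in $t\in(0,T)$, the first term on the right-hand side is controlled by a portion of the seminorm $[u]_{W^r_{p,\mu}(J_T;X)}^p$, while for the second term Fubini and an elementary computation give a bound by $\frac{\epsilon^{rp}}{rp(1-\epsilon)^{rp}}\,I$, where $I:=\int_0^T t^{(1-\mu-r)p}\|u(t)\|_X^p\,dt$ is the quantity to be estimated.

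The crucial step is absorption. From $c(\epsilon)\ge \epsilon^{(1-\mu)p+1}/((1-\mu)p+1)$ and the hypothesis $r+\mu > 1+1/p$ (which yields $(1-\mu-r)p+1 < 0$), the ratio $c(\epsilon)/\epsilon^{rp}$ blows up as $\epsilon\to 0^+$. Therefore I can fix $\epsilon$ so small (and independent of $T$) that $c(\epsilon) \ge 2\cdot 2^{p-1}\epsilon^{rp}/(rp(1-\epsilon)^{rp})$, giving $I\le C\,\|u\|_{W^r_{p,\mu}(J_T;X)}^p$ with $C$ independent of $T$. For $u\in C^\infty([0,T];X)$ with $u(0)=0$ the a priori finiteness $I<\infty$ follows from $\|u(t)\|_X = O(t)$ near $0$ combined with $\mu+r\le 2<2+1/p$, so the absorption is legitimate.

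To pass to general $u\in \zW^r_{p,\mu}(J_T;X)$, I would approximate by a sequence $u_n$ of smooth functions with $u_n(0)=0$ converging in the intrinsic norm (this density is standard for weighted Sobolev--Slobodeckij spaces with vanishing initial trace, and can also be obtained by an extension to $\mathbb{R}_+$ followed by mollification in time and truncation away from $t=0$). Extracting an a.e.\ convergent subsequence and applying Fatou's lemma on the left-hand side produces the claimed inequality for $u$ with the same $T$-independent constant. The main technical obstacle is the absorption step, since it requires a priori finiteness of $I$; the density argument is what circumvents this, by reducing to the smooth case where the finiteness is automatic.
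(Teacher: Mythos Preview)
Your proof is correct and rests on the same core mechanism as the paper's---a Hardy-type absorption---but the implementation differs. The paper writes $t^{1-\mu-r}\|u(t)\|_X = t^{-\mu-r}\int_0^t \|u(t)\|_X\,ds$, splits via $\|u(t)\|_X \le \|u(t)-u(s)\|_X + \|u(s)\|_X$, bounds the difference term directly by $[u]_{W^r_{p,\mu}}$ after a H\"older step in $s$, and applies the classical weighted Hardy inequality to the second term, obtaining in one stroke the constant $1/(\mu+r-1/p)$, which is strictly less than $1$ precisely by the hypothesis $r+\mu>1+1/p$. You instead integrate the kernel $\tau^{(1-\mu)p}(t-\tau)^{-rp-1}$ only over $(0,\epsilon t)$ and tune $\epsilon$ small to force absorption; this is more hands-on but equally valid. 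What your route buys is an explicit treatment of the a~priori finiteness of $I$ via a density/Fatou argument, an issue the paper passes over silently. What the paper's route buys is brevity: the classical Hardy inequality absorbs in a single line without any auxiliary parameter or approximation step.
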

%%%%%%%%%%%%%%%%
\begin{proof}
The case $r \in \{0,1\}$ follows from the definition of $L_{p,\mu}(J_T;X)$ and \cite[Lemma~1.1.2(b)]{MeyriesThesis}. When $r\in (0,1)$,
\begin{align}
\notag &\left( \int_0^T t^{(1-\mu-r)p} \|u (t)\|_X^p \, dt \right)^{1/p} \\
\notag & = \left( \int_0^T  \left( t^{  -\mu-r } \int_0^t \|u (t)\|_X \, ds \right)^p \, dt \right)^{1/p}\\
\label{eq time weight equi norm}
& \leq  \left( \int_0^T  \left( t^{  -\mu-r } \int_0^t \|u (t)-u(s)\|_X \, ds \right)^p \, dt \right)^{1/p} + 
\left( \int_0^T  \left( t^{  -\mu-r } \int_0^t \|u (s)\|_X \, ds \right)^p \, dt \right)^{1/p}.
\end{align}
We will use H\"older's inequality to estimate the first term in \eqref{eq time weight equi norm} as follows.
\begin{align*}
& \left( \int_0^T  \left( t^{  -\mu-r } \int_0^t \|u (t)-u(s)\|_X \, ds \right)^p \, dt \right)^{1/p}\\
& \leq \left( \int_0^T  t^{  (-\mu-r)p } \left(    \int_0^t s^{(1-\mu)p} \|u (t)-u(s)\|_X^p \, ds \right) \left(    \int_0^t  s^{(\mu-1)p'} \, ds \right)^{p/p'} \, dt \right)^{1/p}\\
& \leq C   \left( \int_0^T  t^{   -1-r p } \left(    \int_0^t s^{(1-\mu)p} \|u (t)-u(s)\|_X^p \, ds \right)  \, dt \right)^{1/p}\\
& \leq C [u]_{W^r_{p,\mu}(J_T;X)}.
\end{align*}
In the last line, we  used that $1/t < 1/( t-s)$ for $s\in (0,t)$.
Observe that it follows from the condition   $\mu \in (1/p,1]$ that $(\mu-1)p'>-1$.
To estimate the second term in \eqref{eq time weight equi norm}, we will apply Hardy's inequality, 
c.f. \cite[Lemma~3.4.5]{PrSi16}, to obtain 
\begin{align*}
\left( \int_0^T  \left( t^{  -\mu-r } \int_0^t \|u (s)\|_X \, ds \right)^p \, dt \right)^{1/p} \leq 
\frac{1}{(\mu+r-1/p)} \left( \int_0^T t^{(1-\mu-r)p} \|u(t)\|_X^p \, dt \right)^{1/p}.
\end{align*}
Hence, we have shown that
\begin{align}
\notag &\left( \int_0^T t^{(1-\mu-r)p} \|u (t)\|_X^p \, dt \right)^{1/p} 
\le   C \|u\|_{W^r_{p,\mu}(J_T;X)} + \frac{1}{(\mu+r-1/p)} \left( \int_0^T t^{(1-\mu-r)p} \|u(t)\|_X^p \, dt \right)^{1/p}.
\end{align}
In view of the condition $r+\mu>1+1/p$, the asserted estimate then follows.
\end{proof}

\medskip
\goodbreak
    %%%%%%%%%%%%%%%%%%%%%%%%
    \begin{lemma}\label{lem: auxiliary lemma}
    Suppose  $\mu\in [0,1]$. Then we have
    \begin{equation*}
    \left( t^{1-\mu} - s^{1-\mu} \right)^p  \leq  t^{-\mu p} (t-s)^p\quad\text{and}\quad  |t^{\mu -1} - s^{\mu-1}|^p\le s^{(\mu-1)p} t^{-p} (t-s)^p,
    \quad 0<s<t < \infty.
    \end{equation*}
    \end{lemma}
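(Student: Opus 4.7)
The plan is to reduce each inequality to an elementary algebraic identity, rather than invoke any analytic machinery; both statements are purely pointwise estimates for $0<s<t$ and $\mu\in[0,1]$.

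For the first inequality, I would first observe that since $1-\mu\ge 0$ and $s<t$, the quantity $t^{1-\mu}-s^{1-\mu}$ is nonnegative, so it suffices (by raising to the $p$-th power at the end) to prove
\[
t^{1-\mu}-s^{1-\mu}\le t^{-\mu}(t-s).
\]
Multiplying through by $t^{\mu}$ turns this into $t-t^{\mu}s^{1-\mu}\le t-s$, i.e.\ $t^{\mu}s^{1-\mu}\ge s$, equivalently $(t/s)^{\mu}\ge 1$, which holds trivially since $\mu\ge 0$ and $t\ge s$. Raising to the $p$-th power gives the stated bound.

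For the second inequality, I would use that $\mu-1\le 0$ and $s\le t$ to get $s^{\mu-1}\ge t^{\mu-1}$, so $|t^{\mu-1}-s^{\mu-1}|=s^{\mu-1}-t^{\mu-1}$. The claim then reduces to
\[
s^{\mu-1}-t^{\mu-1}\le s^{\mu-1}t^{-1}(t-s).
\]
Multiplying by $t>0$ gives $ts^{\mu-1}-t^{\mu}\le ts^{\mu-1}-s^{\mu}$, which simplifies to $s^{\mu}\le t^{\mu}$, again immediate from $\mu\ge 0$ and $s\le t$. Taking $p$-th powers completes the proof.

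There is no real obstacle here; the lemma is a technical calculus fact and the only subtlety is to keep track of the sign $1-\mu$ versus $\mu-1$ so that the absolute values and the direction of the inequality are handled correctly. The two boundary cases $\mu=0$ and $\mu=1$ are easily checked by inspection (indeed the first inequality is an equality at $\mu=0$ and the second at $\mu=0$ as well), confirming that the bounds are sharp and that no factor has been lost.
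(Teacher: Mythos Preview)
Your proof is correct and follows essentially the same elementary route as the paper: both arguments reduce the inequalities to the monotonicity fact $(t/s)^{\mu}\ge 1$ (equivalently $s^{\mu}\le t^{\mu}$) for $0<s<t$ and $\mu\in[0,1]$. The only cosmetic difference is that the paper factors $|t^{\mu-1}-s^{\mu-1}|^p=(st)^{(\mu-1)p}(t^{1-\mu}-s^{1-\mu})^p$ and then invokes the first inequality, whereas you handle the second inequality directly; either way the content is identical.
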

    %%%%%%%%%%%%%%%%%%%
\begin{proof}
The assertions are clear for $\mu \in\{0,1\}$.
In  case $\mu\in (0,1)$  we obtain
$$
(t^{1-\mu} - s^{1-\mu})^p = t^{(1-\mu)p} \left(1- \left(s/t\right)^{1-\mu}\right)^p
\le  t^{(1-\mu)p}  \left(1- (s/t)\right)^p = t^{-\mu p} (t-s)^p.
$$
This estimate, in turn, yields
$$ 
|t^{\mu-1}- s^{\mu -1}|^p =  s^{(\mu-1)p} t^{(\mu -1)p} (t^{1-\mu}- s^{1-\mu})^p \le s^{(\mu-1)p} t^{-p} (t-s)^p.
$$
\end{proof}

%%%%%%%%%%%%%%%
For $u\in L_{1,{\rm loc}}(J_T;X)$ we define $(\Phi_\mu u)(t):= t^{1-\mu} u(t)$, see~\cite{PrSi04}. It is then clear that 
\begin{equation}
\label{isometric}
\Phi_\mu: L_{p,\mu}(J_T; X) \to L_p(J_T;X)\quad\text{is an isometric isomorphism}, 
\end{equation}
 and its inverse $\Phi^{-1}_\mu$ is given by $(\Phi_\mu^{-1} v)(t)= t^{\mu -1}v(t)$.
The next result shows that $\Phi_\mu$ induces an isomorphism for the Sobolev spaces 
${_0}W^r_{\mu, p}(J_T;X)$.

%%%%%%%%%%%%%%%%%%%%%%%
\goodbreak

%%%%%%%%%%%%%%
\begin{lemma}
\label{lem: Phi-mu}
Let $X$ be a Banach space. 
Suppose that   $r,\mu\in (1/p,1]$  and $r+\mu > 1+1/p$.
Then it holds that
$$ \Phi_\mu \in {\cL}is (_{0}W^r_{p,\mu}(J_T;X), {_0}W^r_p(J_T;X)).$$
Moreover, there exists a constant $C$ which is independent of  $T\in (0,\infty]$ such that
\begin{equation}
\label{Phi-mu}
\| \Phi_\mu u\|_{W^r_{p,\mu}(J_T;X)}\le C \| u \|_{W^r_{p}(J_T;X)},\qquad 
\| \Phi^{-1}_\mu v\|_{W^r_{p}(J_T;X)}\le C \| v \|_{W^r_{p,\mu}(J_T;X)},
\end{equation}
where the spaces are equipped with their respective intrinsic norms.
\end{lemma}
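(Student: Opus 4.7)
The plan is to verify the mapping properties of $\Phi_\mu$ by first dispatching the extreme cases $r\in\{0,1\}$ and then treating the fractional range $r\in(1/p,1)$ directly via the intrinsic seminorm. For $r=0$ the statement is exactly \eqref{isometric}, and the zero-trace condition is preserved because $t^{1-\mu}$ vanishes at $0$ (the case $\mu=1$ being trivial). For $r=1$, I would apply the product rule $\partial_t(t^{1-\mu}u(t))=(1-\mu)t^{-\mu}u(t)+t^{1-\mu}\partial_t u(t)$; the second summand contributes $\|\partial_t u\|_{L_{p,\mu}(J_T;X)}$ by definition, while the first is estimated by invoking Lemma~\ref{lem: time weight equi norm} with exponent $1$, which requires and uses the hypothesis $\mu>1/p$. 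The symmetric argument handles $\Phi_\mu^{-1}$.

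For $r\in(1/p,1)$ the main tool is the decomposition
\[
t^{1-\mu}u(t)-s^{1-\mu}u(s)=s^{1-\mu}(u(t)-u(s))+(t^{1-\mu}-s^{1-\mu})u(t),
\]
plugged into the numerator of $[\Phi_\mu u]_{W^r_p(J_T;X)}$. The first summand reproduces precisely $[u]_{W^r_{p,\mu}(J_T;X)}$ because the weight $s^{(1-\mu)p}$ matches the one in \eqref{seminorm}. The second summand is attacked by the first estimate of Lemma~\ref{lem: auxiliary lemma}, which furnishes the factor $t^{-\mu p}(t-s)^p$; after absorbing $(t-s)^p$ into the denominator $(t-s)^{rp+1}$ and integrating $s\in(0,t)$, one is left with a weighted $L_p$-quantity $\int_0^T t^{(1-\mu-r)p}\|u(t)\|_X^p\,dt$, which is controlled by $\|u\|_{W^r_{p,\mu}(J_T;X)}^p$ thanks to Lemma~\ref{lem: time weight equi norm} and the standing hypothesis $r+\mu>1+1/p$.

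For the inverse inequality I would use the mirror decomposition
\[
t^{\mu-1}v(t)-s^{\mu-1}v(s)=t^{\mu-1}(v(t)-v(s))+(t^{\mu-1}-s^{\mu-1})v(s).
\]
The first summand, weighted by $s^{(1-\mu)p}$, satisfies $s^{(1-\mu)p}t^{(\mu-1)p}\le 1$ for $s\le t$ since $\mu\le 1$, so it immediately reduces to $[v]_{W^r_p(J_T;X)}$. For the second summand, the second estimate of Lemma~\ref{lem: auxiliary lemma} produces $s^{(1-\mu)p}|t^{\mu-1}-s^{\mu-1}|^p\le t^{-p}(t-s)^p$; swapping the order of integration, the decisive quantity becomes $\int_s^T t^{-p}(t-s)^{p-rp-1}\,dt$, which I would estimate by splitting $[s,2s]\cup[2s,T]$. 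On $[s,2s]$ one uses $t^{-p}\le s^{-p}$; on $[2s,T]$ one uses $t-s\asymp t$. Both pieces yield an $s$-uniform bound of order $s^{-rp}$, whose convergence at infinity requires $rp>1$, i.e.\ exactly $r>1/p$. The leftover $\int_0^T s^{-rp}\|v(s)\|_X^p\,ds$ is controlled by $\|v\|_{W^r_p(J_T;X)}^p$ through Lemma~\ref{lem: time weight equi norm} applied with weight parameter~$1$ (whose hypothesis $r+1>1+1/p$ coincides with $r>1/p$).

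The $T$-independence of the constants is automatic throughout, since every estimate is either pointwise in the integration domain or follows from integrations extended up to $\infty$. The main technical obstacle I anticipate is precisely the second summand in the inverse direction, where the split of the $t$-integral must be organised carefully so as to keep the bound uniform in~$T$ and independent of~$s$; the apparent loss at $t=\infty$ is what forces the trace condition $r>1/p$ to enter twice in the argument. Bijectivity follows at once from $\Phi_\mu\circ\Phi_\mu^{-1}=\mathrm{id}$, both maps preserving the vanishing trace at $t=0$.
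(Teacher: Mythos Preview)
Your argument is correct and follows essentially the same route as the paper: the cases $r\in\{0,1\}$ are handled via the product rule together with Lemma~\ref{lem: time weight equi norm}, and for $r\in(1/p,1)$ the forward estimate uses exactly the paper's decomposition, Lemma~\ref{lem: auxiliary lemma}, and then Lemma~\ref{lem: time weight equi norm}. The only deviation is in the inverse direction: the paper splits $t^{\mu-1}v(t)-s^{\mu-1}v(s)=s^{\mu-1}(v(t)-v(s))+(t^{\mu-1}-s^{\mu-1})v(t)$, which after Lemma~\ref{lem: auxiliary lemma} leaves the elementary $s$-integral $\int_0^t(t-s)^{(1-r)p-1}\,ds=c(r,p)\,t^{(1-r)p}$ and lands directly on $\int_0^T t^{-rp}\|v(t)\|_X^p\,dt$, whereas your choice (attaching the power-difference to $v(s)$) forces the Fubini swap and the dyadic split of $[s,T]$; both arrive at the same weighted $L_p$-quantity controlled by Lemma~\ref{lem: time weight equi norm} with $\mu=1$, but the paper's version avoids the extra case analysis.
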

%%%%%%%%%%%%%%
\begin{proof}
The first part of the assertion has been established in \cite[Lemma 2.3]{MeSc12},
where the spaces are equipped with their respective interpolation norms   (see Remark~\ref{rem: norms}). 

\medskip
We will now establish the uniform estimates  in \eqref{Phi-mu} for intrinsic norms.
The case $r=1$ follows readily from Lemma~\ref{lem: time weight equi norm}. For the reader's convenience, we include a proof 
(see also \cite[Lemma 1.1.3]{MeyriesThesis}).
Suppose $u\in W^1_{p,\mu}(J_T;X)$. Then we obtain
\begin{equation*}
\| (\Phi_\mu u)^\prime \|_{L_{p}(J_T;X)} \le \left(\int_0^T \| t^{1-\mu} u^\prime (t)\|^p_X\, dt \right)^{1/p} 
  + (1-\mu) \left(\int_0^T t^{-\mu p} \| u(t) \|^p_X\, dt \right)^{1/p}
 \le C \| u\|_{W^1_{p,\mu} (J_T;X)},
\end{equation*}
where we used  Lemma~\ref{lem: time weight equi norm} with $r=1$.
Suppose now that $v\in {_0}W^1_{p,\mu}(J_T; X)$. Then we obtain
\begin{equation*}
\| (\Phi^{-1}_\mu v)^\prime \|_{L_{p,\mu}(J_T;X)} 
\le \Big(\int_0^T \|v^\prime (t)\|^p_X\,dt \Big)^{1/p} + |\mu-1| \left(\int_0^T t^{-1} \| v \|^p_X\,dt\right)^{1/p}
\le \| v \|_{W^1_p(J_T;X)},
\end{equation*}
where we employed, once more, Lemma~\ref{lem: time weight equi norm} with $r=\mu=1.$
These estimates together with \eqref{isometric} imply the assertion.

\medskip\noindent
We will now consider the case $r<1$ and $r+\mu > 1+1/p$.
Suppose $u\in {_0}W^r_{p,\mu}(J_T;X)$. Then we obtain
\goodbreak
\begin{align}
 [\Phi_\mu u ]_{W^r_{p}(J_T;X)} 
\notag & = \left(\int_0^T  \int_0^t  \frac{\| (\Phi_\mu u)(t)- (\Phi_\mu u)(s)\|_X^p}{(t-s)^{1+rp}}\, ds \,  d t \right)^{1/p} \\
\notag & \le \left(\int_0^T  \int_0^t s^{p(1-\mu)} \frac{\| u(t) - u(s) \|_X^p}{(t-s)^{1+rp}}\, ds \,  d t \right)^{1/p}  \\
\notag & \quad + \left(\int_0^T  \int_0^t  \frac{ (t^{1-\mu} - s^{1-\mu})^p}{(t-s)^{1+rp}} \| u (t) \|^p_X \, ds \,  d t \right)^{1/p} \\
\notag & \le [u ]_{W^r_{p,\mu}(J_T;X)} +  \left(\int_0^T  \int_0^t  t^{-\mu p} (t-s)^{(1-r)p-1} \| u (t)\|^p_X \, ds \,  d t \right)^{1/p} \\
& \label{ab} \le  [u ]_{W^r_{p,\mu}(J_T;X)} +c(r,p)  \left(\int_0^T   t^{(1-\mu -r )p} \| u (t)\|^p_X  \,  d t \right)^{1/p} \\
& \label{ac}\le  C \| u \| _{W^r_{p,\mu}(J_T;X)}.
\end{align}
We used Lemma~\ref{lem: auxiliary lemma} in \eqref{ab} and Lemma~\ref{lem: time weight equi norm} in \eqref{ac}.

\medskip\noindent
Suppose that $v\in {_0}W^r_{p}(J_T;X)$. Then we obtain
\begin{align}
 [(\Phi_\mu)^{-1} v ]_{W^r_{p,\mu}(J_T;X)} 
\notag & = \left(\int_0^T  \int_0^t s^{(1-\mu)p} \frac{\| (\Phi^{-1}_\mu v)(t)- (\Phi^{-1}_\mu v)(s)\|_X^p}{(t-s)^{1+rp}}\, ds \,  d t \right)^{1/p} \\
\notag & \le  \left(\int_0^T  \int_0^t  \frac{\|  v(t)- v(s)\|_X^p}{(t-s)^{1+rp}}\, ds \,  d t \right)^{1/p}  \\
 \notag & \quad         + \left(\int_0^T  \int_0^t s^{(1-\mu)p} \frac{|t^{\mu-1}-  s^{\mu-1}|^p}{ (t-s)^{1+rp}} \| v(t) \|^p_X\, ds \,  d t \right)^{1/p} \\
& \label{ad} \le  [u ]_{W^r_{p}(J_T;X)} +c(r,p)  \left(\int_0^T   t^{-rp} \| u (t)\|^p_X  \,  d t \right)^{1/p} \\
& \label{ae}\le  C \| u \| _{W^r_{p}(J_T;X)}.
\end{align}
Here we used, once more,  Lemma~\ref{lem: auxiliary lemma} in \eqref{ad} and Lemma~\ref{lem: time weight equi norm} in \eqref{ae}.
\end{proof}
%%%%%%%%%%%%%
\begin{proposition}
\label{extension-zero}
Let $X$ be a Banach space.
Suppose    $r,\mu\in (1/p,1]$ and  $r+\mu>1+1/p$. Then there exists an extension operator: 
\begin{equation*}
\cE_{J_T}: \zW^r_{p,\mu}(J_T;X) \to \zW^r_{p,\mu}(\bR_+;X)
\end{equation*}
such that its norm is independent of  $T\in (0, \infty]$,
 where the spaces are equipped with their intrinsic norms.
\end{proposition}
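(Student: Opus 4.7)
My strategy is to reduce to the unweighted case $\mu=1$ by conjugating with the isomorphism $\Phi_\mu$ of Lemma~\ref{lem: Phi-mu}, and to handle the unweighted problem by reflection at the endpoint $T$. Since the norms in \eqref{Phi-mu} are uniform in $T\in(0,\infty]$, once an extension operator $E:\zW^r_p(J_T;X)\to \zW^r_p(\bR_+;X)$ with $T$-independent norm is produced, the composition
\[
\cE_{J_T} \;:=\; \Phi_\mu^{-1}\circ E\circ \Phi_\mu
\]
inherits $T$-independent norm bounds, and the identity $(\cE_{J_T}u)(t)=t^{\mu-1}(E\Phi_\mu u)(t)=t^{\mu-1}\cdot t^{1-\mu}u(t)=u(t)$ for $t\in(0,T)$ confirms it is an extension.

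For the unweighted building block I would take the reflection
\[
(Ev)(t) \;:=\; \begin{cases} v(t), & 0<t\le T,\\ v(2T-t), & T\le t\le 2T,\\ 0, & t\ge 2T,\end{cases}
\]
which is continuous at $t=T$ by construction and at $t=2T$ because $v(0)=0$ in the trace sense (the condition $r>1/p$ ensures this trace is defined). The $L_p$-estimate $\|Ev\|_{L_p(\bR_+;X)}^p=2\|v\|_{L_p(J_T;X)}^p$ is immediate, and the case $r=1$ is handled by $(Ev)'=\pm v'(2T-\cdot)$ on the two subintervals.

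The substantive step is the bound $[Ev]_{W^r_p(\bR_+;X)}\le C\|v\|_{W^r_p(J_T;X)}$ for $r\in(1/p,1)$, with $C$ independent of $T$. I would split $\{0<s<t<\infty\}$ into the six pieces determined by placing each of $s,t$ in $(0,T)$, $(T,2T)$, or $(2T,\infty)$. Four of them are essentially free: two return $[v]_{W^r_p(J_T;X)}^p$ directly (the one with $s,t\in(T,2T)$ after the substitution $\sigma=2T-s,\tau=2T-t$), and the pair past $2T$ is identically zero. The two genuinely nontrivial pieces are
\emph{(i)} the mixed region $s\in(0,T)$, $t\in(T,2T)$, which after $\tau=2T-t$ becomes
\[
\int_0^T\!\!\int_0^T \frac{\|v(\tau)-v(s)\|_X^p}{(2T-\tau-s)^{1+rp}}\,d\tau\,ds,
\]
and \emph{(ii)} the two ``tail'' regions where exactly one of $s,t$ exceeds $2T$; explicit integration in the outer variable collapses these to quantities of the form $\int_0^T(2T-s)^{-rp}\|v(s)\|_X^p\,ds$.

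The main technical obstacle is controlling (i) and (ii), but both succumb to elementary inequalities. For (i), the algebraic bound $2T-\tau-s\ge|\tau-s|$ valid for $\tau,s\le T$ replaces the denominator by the standard Slobodeckij kernel, yielding a constant multiple of $[v]_{W^r_p(J_T;X)}^p$. For (ii), $2T-s\ge s$ for $s\le T$ reduces the expression to $\int_0^T s^{-rp}\|v(s)\|_X^p\,ds$, which is then absorbed into $\|v\|_{W^r_p(J_T;X)}^p$ by the zero-trace Hardy-type estimate of Lemma~\ref{lem: time weight equi norm} applied with $\mu=1$ (whose hypothesis $r+1>1+1/p$ is exactly $r>1/p$, satisfied here). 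Combining the six contributions and invoking Lemma~\ref{lem: Phi-mu} for the passage back to $\zW^r_{p,\mu}$ completes the proof.
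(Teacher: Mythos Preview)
Your approach is essentially identical to the paper's: both conjugate by $\Phi_\mu$ (via Lemma~\ref{lem: Phi-mu}) to reduce to the unweighted case and use the reflection extension $v\mapsto v(2T-\cdot)$ followed by extension by zero; indeed the paper's explicit formula $\big(\frac{2T-t}{t}\big)^{1-\mu}u(2T-t)$ is precisely your $\Phi_\mu^{-1}\circ E\circ \Phi_\mu$. The only difference is that the paper delegates the unweighted $T$-uniform bound to \cite[Proposition~6.1]{PSS07}, whereas you supply a direct computation---your region-by-region argument (including the key observations $2T-\tau-s\ge|\tau-s|$ for the mixed block and the appeal to Lemma~\ref{lem: time weight equi norm} with $\mu=1$ for the tail terms) is correct and gives exactly that bound.
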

%%%%%%%%%%%%%
\begin{proof}
We define the extension operator by
\begin{align*}
\cE_{J_T} u (t) :=
\begin{cases}
u(t)  \quad &\text{for } 0<t\le T \\
\left(\frac{2T-t}{t}\right)^{1-\mu} u(2T-t) & \text{for } T < t \leq 2T \\
0 &\text{for }  2T<t.
\end{cases}
\end{align*}
The statement follows from Lemma~\ref{lem: Phi-mu},  \cite[Proposition~6.1]{PSS07}, and the commutativity of the diagram
\begin{equation*}
\begin{aligned}
&{_0}W^r_{p,\mu}(J_T;X)\quad\overset{\Phi_\mu}{\longrightarrow} &&{_0}W^r_{p}(J_T;X) \\
\phantom{X}&\quad\downarrow  \cE_{J_T}                     &&\quad\ \downarrow \cE_T \\ 
&{_0}W^r_{p,\mu}(\bR_+;X)\quad\overset{{\Phi^{-1}_\mu}}{\longleftarrow} &&\  {_0}W^r_p(\bR_+;X), 
\end{aligned}
\end{equation*}
where the extension operator $\cE_T$  on the right side is defined in \cite[Proposition~6.1]{PSS07}.
\end{proof}
\begin{remark}
\label{rem: norms}
For $r\in (0, 1)$, fractional Sobolev spaces with temporal weight can also be defined by means of real interpolation.
It then holds that
$$
W^r_{p,\mu}(J_T;X) \stackrel{\cdot}{=} \left( L_{p,\mu}(J_T;X), W^1_{p,\mu}(J_T;X)  \right)_{r,p},
$$
where the symbol $\stackrel{\cdot}{=}$ means equivalent norms; see
\cite[Proposition 1.1.13]{MeyriesThesis}, or \cite[equation (2.6)]{MeSc12}.
The corresponding norm  is called  the interpolation norm of $W^r_{p,\mu}(J_T;X)$.
It is pointed out in  \cite[Remark~1.1.15]{MeyriesThesis}   that the equivalence constant between the intrinsic and the interpolation norm of $W^s_{p,\mu}(J_T;X)$ 
blows up as $T~\to~0^+$.\\
\cite[Lemma~1.1.15]{MeyriesThesis} shows that there exists an extension operator $\cE_T: \zW^r_{p,\mu}(J_T;X) \to \zW^r_{p,\mu}(\bR_+;X)$ whose norm is independent of $T$, where both spaces are equipped with the corresponding interpolation norms. 

The merit of Proposition~\ref{extension-zero}, as opposed to \cite[Lemma~1.1.15]{MeyriesThesis}, lies in the fact that we can use intrinsic norms. 
This greatly facilitates deriving estimates for nonlinear boundary terms.
We should like to mention, though, that Proposition~\ref{extension-zero} requires
the conditions   $r\in (1/p, 1]$  and $r+\mu > 1+1/p$.

\medskip

\end{remark}

\goodbreak

The following result is used in Section~\ref{section:stability} in order to show stability of (constant) equilibria.
%%%%%%%%%%%%%%%%%%
\begin{lemma}\label{lem: equivalent seminorm}
Let $T>0$, $r\in (0,1)$, $\omega \in \bR$, and $\mu \in (1/p,1]$. 
We then set 
$$B_T =\{(s,t)\in (0,T)^2: 0<s<t\}\quad\text{and} \quad B_T^1=\{(s,t)\in (0,T)^2: 0<t-s<1\}.$$
Suppose that $X$ is a Banach space.
Then 
\begin{align*}
 [ e_\omega u ]_{W^r_{p,\mu}(J_T; X)}  
 &\leq C \|e_\omega u \|_{L_{p,\mu} (J_T;X)} +  \left(   \iint_{B_T^1} \frac{\| s^{1-\mu} e^{\omega s} (u(t)-u(s)) \|_X^p}{(t-s)^{1+r p}} \, ds \, dt \right)^{1/p} \\
& \leq C \| e_\omega u \|_{W^r_{p,\mu}(J_T; X)} ,
\end{align*}
where the constant $C=C(p,r, \omega)$ is independent of $T$ and   $e_\omega: L_{1,loc}(\bR_+) \to  L_{1,loc}(\bR_+): u  \mapsto e^{\omega t} u$. 
\end{lemma}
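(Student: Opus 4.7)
Set $v:=e_\omega u$, so $v(t)=e^{\omega t}u(t)$ and $[e_\omega u]_{W^r_{p,\mu}(J_T;X)}^p=\iint_{B_T}s^{(1-\mu)p}\|v(t)-v(s)\|_X^p(t-s)^{-rp-1}\,ds\,dt$. The plan is to split the domain of integration $B_T$ along the diagonal strip $B_T^1=\{0<t-s<1\}$ and its complement $B_T\setminus B_T^1=\{t-s\geq1\}$, and on each piece perform the algebraic decomposition
\[
v(t)-v(s)\;=\;e^{\omega s}\bigl(u(t)-u(s)\bigr)+\bigl(e^{\omega t}-e^{\omega s}\bigr)u(t).
\]
The first summand is exactly the quantity that appears in the middle expression of the lemma, while the second is a remainder to be absorbed in $\|v\|_{L_{p,\mu}(J_T;X)}$.

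For the contribution from $B_T\setminus B_T^1$, I will use $\|v(t)-v(s)\|^p\leq2^{p-1}(\|v(t)\|^p+\|v(s)\|^p)$ together with Fubini. In both resulting integrals, the inner integral over $t-s\geq1$ produces a factor $\int_1^\infty u^{-rp-1}\,du=1/(rp)$ independent of $T$, and the remaining $s^{(1-\mu)p}$-weight matches the definition of $\|v\|_{L_{p,\mu}(J_T;X)}^p$ after bounding $(t-s)^{(1-\mu)p}\leq t^{(1-\mu)p}$ where needed. This yields a bound of the form $C(p,r)\|v\|_{L_{p,\mu}(J_T;X)}^p$.

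For the contribution from $B_T^1$, the first summand reproduces the double integral that appears in the statement. For the remainder summand, the key estimate is the elementary bound
\[
\bigl|e^{\omega t}-e^{\omega s}\bigr|\;\leq\;e^{\omega s}\,|\omega|\,e^{|\omega|(t-s)}(t-s)\;\leq\;e^{\omega s}\,|\omega|\,e^{|\omega|}(t-s),\qquad(s,t)\in B_T^1,
\]
which supplies one free factor of $(t-s)$, reducing the singular kernel to the integrable $(t-s)^{(1-r)p-1}$. To convert the weight $s^{1-\mu}e^{\omega s}$ (attached to $\|u(t)\|_X=e^{-\omega t}\|v(t)\|_X$) into the intended weight $t^{1-\mu}e^{\omega t}$, I will use the two monotonicity comparisons available on $B_T^1$: since $1-\mu\geq0$ and $s\leq t$, one has $s^{1-\mu}\leq t^{1-\mu}$; and for any real $\omega$, $e^{\omega s}=e^{\omega t}e^{-\omega(t-s)}\leq e^{|\omega|}e^{\omega t}$. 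Combining these gives $s^{1-\mu}e^{\omega s}\|u(t)\|_X\leq e^{|\omega|}t^{1-\mu}\|v(t)\|_X$, after which the Fubini/integration in $s$ produces another $T$-independent constant times $\|v\|_{L_{p,\mu}(J_T;X)}^p$. This completes the first inequality of the lemma.

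For the second inequality, by definition $\|v\|_{W^r_{p,\mu}(J_T;X)}=\|v\|_{L_{p,\mu}(J_T;X)}+[v]_{W^r_{p,\mu}(J_T;X)}$, so it suffices to bound the double integral on $B_T^1$ in the middle expression by $C\|v\|_{W^r_{p,\mu}(J_T;X)}^p$. Invert the algebraic identity to $e^{\omega s}(u(t)-u(s))=(v(t)-v(s))-(e^{\omega t}-e^{\omega s})u(t)$ and apply the triangle inequality; the first term restricted to $B_T^1$ is bounded by the full seminorm $[v]_{W^r_{p,\mu}(J_T;X)}^p$, and the second term is handled by the same estimate as in the previous paragraph. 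The main obstacle is ensuring that all constants are independent of $T$, including the case $\omega<0$ where $e^{\omega t}$ decays: this is precisely what the restriction to $B_T^1$ buys us, since the ratio $e^{\omega s}/e^{\omega t}$ is then uniformly bounded by $e^{|\omega|}$ no matter how large $T$ is.
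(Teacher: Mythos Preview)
Your proposal is correct and follows essentially the same approach as the paper: both split $B_T$ into the strip $B_T^1$ and its complement, use the algebraic decomposition $v(t)-v(s)=e^{\omega s}(u(t)-u(s))+(e^{\omega t}-e^{\omega s})u(t)$ on the strip, and exploit the Lipschitz estimate for $e^{\omega\cdot}$ together with $s^{1-\mu}\le t^{1-\mu}$. (Minor slip: in your $B_T\setminus B_T^1$ step you wrote $(t-s)^{(1-\mu)p}\le t^{(1-\mu)p}$, but what is actually needed is $s^{(1-\mu)p}\le t^{(1-\mu)p}$; and you go slightly further than the paper by also writing out the second inequality, which the paper leaves implicit.)
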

%%%%%%%%%%%%%%%%%%
\begin{proof}
Using \eqref{seminorm}, we estimate as in \cite[Lemma~11]{LPS06} and obtain
\begin{align}
\notag & [ e_\omega u ]_{W^r_{p,\mu}(J_T; X)}   \\
\notag  &\leq  \left( \iint_{B_T\setminus B_T^1}  s^{p(1-\mu)} \frac{\|e^{\omega t} u(t)- e^{\omega s}u(s)\|_X^p}{(t-s)^{1+r p}}\, ds \,  d t \right)^{1/p} \\ 
\notag & \quad + \left(\iint_{B_T^1}  s^{p(1-\mu)} \frac{\|e^{\omega t} u(t)- e^{\omega s} u(s)\|_X^p}{(t-s)^{1+r p}}\, ds \,  d t \right)^{1/p} \\
\notag  &\leq    \left( \int_0^T \int_0^{t-1}   \frac{\|  t^{1-\mu}\,  e^{\omega t} u(t) \|_X^p}{(t-s)^{1+r p}} \, ds \,  d t \right)^{1/p} 
                   +   \left( \int_0^T \int_{s+1}^T   \frac{ \|  s^{1-\mu}\,  e^{\omega s} u(s) \|_X^p }{(t-s)^{1+r p}} \, dt \,  d s \right)^{1/p} \\
\notag & \quad + \left( \iint_{B_T^1}  s^{p(1-\mu)}  e^{\omega t p} \|u(t)\|_X^p \frac{|e^{-\omega(t-s)}-1|^p}{(t-s)^{1+r p}}\, ds \,  d t \right)^{1/p} + \left( \iint_{B_T^1}  s^{p(1-\mu)} e^{\omega s p} \frac{\|  u(t)-   u(s)\|_X^p}{(t-s)^{1+r p}}\, ds \,  d t \right)^{1/p}   \\
\notag %\label{Young ineq}
 &\leq    \| e_\omega u\|_{L_{p,\mu} (J_T;X)}
\left[ 2  \left(\int_1^\infty \frac{d\tau }{\tau^{1+r p}}\right)^{1/p}  + c(\omega) \left(\int_0^1 \frac{d\tau }{\tau ^{1+(r -1)p}}\right)^{1/p}\right] 
 \\
\notag & \quad + \left( \iint_{B_T^1}  s^{p(1-\mu)} e^{\omega s p} \frac{\|  u(t)-   u(s)\|_X^p}{(t-s)^{1+r p}}\, ds \,  d t \right)^{1/p}  \\
\notag  & \leq  C \| e_\omega u\|_{L_{p,\mu} (J_T;X)} +   \left( \iint_{B_T^1} s^{p(1-\mu)} e^{\omega s p} \frac{\|  u(t)-   u(s)\|_X^p}{(t-s)^{1+r p}}\, ds \,  d t \right)^{1/p} .
\end{align}
In the derivation above, we have used that  $s<t$ for $(s,t)\in B_T$. 
\end{proof}

\goodbreak
Our next result deals with multiplication properties in weighted Sobolev spaces.
 %%%%%%%%%%%%%%%%%%
\begin{lemma}
\label{lem: multiplication}
Let $T_*>0$ be given.
\begin{itemize} 
\item[{\em (i)}] There exists a constant $C>0$, which is independent of $T\in (0,T_*]$, such that 
$$
\|u v \|_{\bF_\mu (J_T)} \leq C \|u   \|_{\bF_\mu (J_T)} \|  v \|_{\bF_\mu (J_T)} ,\quad \text{for all } u,v\in \zbF_\mu (J_T).
$$
\item[{\em (ii)}] There exists a constant $C>0$,  which is independent of $T\in (0,T_*]$,  such that 
$$
\|u v \|_{\bF_\mu (J_T)} \leq C \|u   \|_{\bF_\mu (J_T)} \|  v \|_{\bF_{1,\mu}(J_T)} ,\quad \text{for all } (u,v)\in \bF_\mu (J_T) \times \zbF_{1,\mu}(J_T),
$$
where $\bF_{1,\mu}(J_T)$ is defined as
\begin{equation}
\label{F-mu-1}
\bF_{1,\mu}(J_T)   := W^{1-1/2p}_{p,\mu}(J_T;L_p(\partial\Omega))\cap  C([0,T];W^{2\mu-3/p}_p(\partial\Omega)). 
 \end{equation}
\end{itemize}
\end{lemma}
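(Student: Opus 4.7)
The plan is to control the two defining components of the $\bF_\mu(J_T)$-norm separately: the spatial part $\|\cdot\|_{L_{p,\mu}(J_T; W^{1-1/p}_p(\partial\Omega))}$ and the temporal Gagliardo seminorm $[\cdot]_{W^{1/2-1/2p}_{p,\mu}(J_T; L_p(\partial\Omega))}$. Two ingredients will be used throughout. First, $W^{1-1/p}_p(\partial\Omega)$ is a Banach algebra that embeds into $L_\infty(\partial\Omega)$ (since $\dim\partial\Omega = 2$ and $p>3$); in particular, $\|fg\|_{W^{1-1/p}_p} \le C\bigl(\|f\|_\infty\|g\|_{W^{1-1/p}_p} + \|g\|_\infty\|f\|_{W^{1-1/p}_p}\bigr)$. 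Second, Proposition~\ref{extension-zero} yields a $T$-uniform extension $\zbF_\mu(J_T) \to \zbF_\mu(\bR_+)$; composing with Lemma~\ref{lem:B-mu}(d) on $\bR_+$ and with $Y_{\gamma,\mu} = W^{2\mu-1-3/p}_p(\partial\Omega) \hookrightarrow L_\infty(\partial\Omega)$ (ensured by \eqref{indices cond}) gives the $T$-uniform embedding $\|w\|_{L_\infty(J_T\times\partial\Omega)} \le C\|w\|_{\bF_\mu(J_T)}$ for every $w \in \zbF_\mu(J_T)$.

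For part (i), with both $u,v \in \zbF_\mu(J_T)$ carrying uniform $L_\infty$ bounds, the spatial estimate follows from the algebra inequality by pulling the $L_\infty$ bound on one factor out of the time integral. The temporal seminorm is handled via the standard splitting $uv(t)-uv(\tau) = u(t)(v(t)-v(\tau)) + (u(t)-u(\tau))v(\tau)$ together with the pointwise Hölder bound $\|fg\|_{L_p} \le \|f\|_{L_\infty}\|g\|_{L_p}$, which directly yields
\[
[uv]_{W^{1/2-1/2p}_{p,\mu}(J_T; L_p)} \le C\bigl(\|u\|_{L_\infty(J_T\times\partial\Omega)}\, [v]_{W^{1/2-1/2p}_{p,\mu}(J_T; L_p)} + \|v\|_{L_\infty(J_T\times\partial\Omega)}\,[u]_{W^{1/2-1/2p}_{p,\mu}(J_T; L_p)}\bigr),
\]
with constants uniform in $T \in (0,T_*]$.

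Part (ii) is more delicate because $u$ need not have zero trace, so its $L_\infty$ norm is not $T$-uniformly controlled. I compensate using the improved regularity of $v \in \zbF_{1,\mu}(J_T)$. The embeddings $W^{2\mu-3/p}_p(\partial\Omega) \hookrightarrow L_\infty(\partial\Omega)$ (valid under \eqref{indices cond}) and $W^{2\mu-3/p}_p(\partial\Omega) \hookrightarrow W^{1-1/p}_p(\partial\Omega)$ (valid since $\mu > 1/2 + 1/p$) furnish $\|v\|_{L_\infty(J_T\times\partial\Omega)} + \|v\|_{L_\infty(J_T; W^{1-1/p}_p)} \le C\|v\|_{\bF_{1,\mu}(J_T)}$ uniformly in $T$, which settles the spatial estimate, the $L_{p,\mu}$-part of the temporal norm, and the term $\|v\|_\infty\|u(t)-u(\tau)\|_{L_p}$ in the splitting.

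The remaining and hardest piece is the cross-term $\|u(\tau)\|_{L_p}\|v(t)-v(\tau)\|_{L_\infty}$ in the Gagliardo integral. The plan is to establish a Hölder-in-time estimate $\|v(t)-v(\tau)\|_{L_\infty(\partial\Omega)} \le C(t-\tau)^{\gamma}\|v\|_{\bF_{1,\mu}(J_T)}$ with some $\gamma > s := 1/2-1/(2p)$, so that the resulting double integral
\[
\int_0^T\!\int_0^t \tau^{p(1-\mu)}\|u(\tau)\|_{L_p}^p (t-\tau)^{p(\gamma-s)-1}\,d\tau\,dt
\]
is convergent (uniformly for $T \le T_*$) and reduces, after Fubini, to a constant multiple of $\|u\|_{L_{p,\mu}(J_T;L_p)}^p \le C\|u\|_{\bF_\mu}^p$. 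The Hölder estimate itself is the main obstacle and I would obtain it by interpolating the two components of $\bF_{1,\mu}$: a weighted Sobolev embedding applied to $\zW^{1-1/2p}_{p,\mu}(J_T; L_p(\partial\Omega))$ gives $v \in C^{\mu-3/(2p)}(J_T; L_p(\partial\Omega))$, and combining this Hölder regularity with $v \in C([0,T]; W^{2\mu-3/p}_p(\partial\Omega))$ via real interpolation between $L_p$ and $W^{2\mu-3/p}_p$ (targeting $L_\infty$) yields $v \in C^\gamma(J_T; L_\infty(\partial\Omega))$ for some $\gamma \in (s,\mu-3/(2p))$, with a $T$-uniform constant on the zero-trace subspace thanks again to Proposition~\ref{extension-zero}. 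This closes the proof.
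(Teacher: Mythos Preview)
Your argument is correct, and part (i) is handled exactly as the paper does it. For part (ii) you reach the same conclusion but take a longer detour for the cross term in the Gagliardo seminorm. After splitting $uv(t)-uv(\tau)$, the paper estimates the piece $u(\tau)\bigl(v(t)-v(\tau)\bigr)$ the \emph{other} way in H\"older's inequality:
\[
\|u(\tau)(v(t)-v(\tau))\|_{L_p(\partial\Omega)} \le \|u(\tau)\|_{L_\infty(\partial\Omega)}\,\|v(t)-v(\tau)\|_{L_p(\partial\Omega)}.
\]
Since $W^{1-1/p}_p(\partial\Omega)\hookrightarrow L_\infty(\partial\Omega)$, the factor $\|u(\tau)\|_{L_\infty}$ integrates (with the weight $\tau^{p(1-\mu)}$) to $\|u\|_{L_{p,\mu}(J_T;W^{1-1/p}_p)}^p\le \|u\|_{\bF_\mu(J_T)}^p$, with no $T$-dependence issue. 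For the increment of $v$ one then only needs H\"older continuity of $v$ with values in $L_p(\partial\Omega)$, which follows directly from the single embedding $\zW^{1-1/2p}_{p,\mu}(J_T;L_p)\hookrightarrow \zW^{s_0}_p(J_T;L_p)\hookrightarrow C^\sigma(J_T;L_p)$ with $\sigma>1/2-1/(2p)$; no interpolation between the two components of $\bF_{1,\mu}$ is required. Your route---bounding $u(\tau)$ only in $L_p$ and compensating by upgrading $v(t)-v(\tau)$ to $L_\infty(\partial\Omega)$ via interpolation between $C^{\mu-3/(2p)}(J_T;L_p)$ and $C(J_T;W^{2\mu-3/p}_p)$---does work under \eqref{indices cond} (the resulting H\"older exponent $\gamma$ can be pushed just below $\mu-5/(2p)>1/2-1/(2p)$), but it is more labor than necessary. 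The paper's choice of H\"older pairing sidesteps the interpolation step entirely.
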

\begin{proof}
(i) The assertion  follows from the fact that $\bF_\mu(J_T)$ is a Banach algebra and Proposition~\ref{extension-zero}. 
See also \cite[Lemma~1.3.23]{MeyriesThesis}.

\medskip\noindent
(ii)
It is an easy task to check that 
\begin{align*}
 \|u v \|_{L_{p,\mu}(J_T; W^{1-1/p}_p(\partial\Omega))}\leq    C \|v\|_{L_\infty(J_T;W^{1-1/p}_p(\partial\Omega))}   \|u   \|_{L_{p,\mu}(J_T; W^{1-1/p}_p(\partial\Omega))}    
\end{align*} 
for some $C>0$ independent of $T\in (0,T_*]$.  
In addition, one has
\begin{align}
\notag &\|u v \|_{W^{1/2-1/2p}_{p,\mu}(J_T; L_p(\partial\Omega))} \\
\label{2nd seminorm}
& \leq \|v\|_{C([0,T]\times \overline{\Omega}) } \|u   \|_{L_{p,\mu}(J_T; L_p(\partial\Omega))}  +  [u v]_{W^{1/2-1/2p}_{p,\mu}(J_T; L_p(\partial\Omega))} .
\end{align}  
We can derive from  Proposition~\ref{extension-zero} and \cite[equation~(2.19)]{MeSc12} that
$$
\prescript{}{0}{W}^{1-1/2p}_{p,\mu}(J_T; L_p(\partial\Omega)) \hookrightarrow \prescript{}{0}{W}^{s_0}_p(J_T; L_p(\partial\Omega))
$$
for some $s_0>1/2 + 2/p $ with embedding constant independent of $T\in (0,T_*]$.
This implies that 
$$
\zbF_{1,\mu}(J_T) \hookrightarrow  C^\sigma([0,T]; L_p(\partial\Omega))
$$
for some $\sigma>1/2+1/p$
with embedding constant independent of $T$.
Therefore, 
the second term on the right hand side of \eqref{2nd seminorm} can be estimated as follows:
\begin{align*}
& [u v]_{W^{1/2-1/2p}_{p,\mu}(J_T; L_p(\partial\Omega))}^p  \\
& \quad \leq      \int_0^T \int_0^t   s^{p(1-\mu)} \frac{\|  u(t) v(t) -  u(s) v(s) \|_{L_p(\partial\Omega)}^p}{ (t-s)^{1/2+p/2} } \, ds  dt  
\\
& \quad \leq   	\|v\|_{C([0,T]\times \overline{\Omega}) }^p [ u]_{W^{1/2-1/2p}_{p,\mu}(J_T; L_p(\partial\Omega))}^p \\
& \qquad +    \int_0^T \int_0^t   s^{p(1-\mu)} \|u(s)\|_{C(  \overline{\Omega}) }^p  \frac{\|  v(t) -  v(s) \|_{L_p(\partial\Omega)}^p}{ (t-s)^{1/2+p/2} } \, ds  dt   
\\
& \quad \leq   		\|v\|_{C([0,T]\times \overline{\Omega}) }^p [ u]_{W^{1/2-1/2p}_{p,\mu}(J_T; L_p(\partial\Omega))}^p \\
& \qquad +  \|v\|_{C^\sigma([0,T]; L_p(\partial\Omega))}^p \int_0^T \int_0^t   s^{p(1-\mu)} \|u(s)\|_{C(  \overline{\Omega}) }^p 
 (t-s)^{\sigma p-(1/2+p/2)}  \, ds  dt    \\
& \quad \leq   	\|v\|_{C([0,T]\times \overline{\Omega}) }^p [ u]_{W^{1/2-1/2p}_{p,\mu}(J_T; L_p(\partial\Omega))}^p + C_1 \|v\|_{C^\sigma([0,T]; L_p(\partial\Omega))}^p \|u\|_{L_{p,\mu}(J_T;C( \overline{\Omega})) }^p  
\end{align*}
for some constant $C_1=C_1(T)>0$ that is uniform in $T\in (0,T_*]$.
This implies
$$[u v]_{W^{1/2-1/2p}_{p,\mu}(J_T; L_p(\partial\Omega))}\le C  \|v\|_{_{0}\bF_{1,\mu}(J_T)} \|v\|_{\bF_\mu(J_T)}, $$
where $C=C(T)$ is uniform in $T\in (0,T_*]$.
\end{proof}

{\small
\medskip
%%%%%%%%%%%%%%%%%%%%
\section{Properties of nonlinear maps} 
%%%%%%%%%%%%%%%%%%%%
In this section, we establish some mapping properties for the nonlinear operators in \eqref{magneto sys}.
Our first step is to study the Nemyskii operators induced by the functions in \eqref{assumption}.

%%%%%%%%%%%%%%%
\begin{lemma}
\label{lem: Nemyskii}
Suppose $\varphi\in C^5(\bR)$ and $X\in \{W^{2\mu-2/p}_p(\Omega), W^{2\mu+1-2/p}_p(\Omega), \bE_{2,\mu}^k(J_T), \bF_\mu(J_T)\}$, 
where
\begin{equation*}
%\label{E-2-mu}
\bE_{2,\mu}^k(J_T)   := W^1_{p,\mu}(J_T;W^k_p( \Omega))\cap  L_{p,\mu}(J_T; W^{k+2}_p(\Omega)) , \quad k=0,1.
\end{equation*}
Then the Nemyskii operator induced by $\varphi$, still denoted by $\varphi$, satisfies  
$$
\varphi\in C^1(X).
$$
Moreover, given $T_*>0$
\begin{equation}
\label{bdd boundary space}
\| \varphi (u) \|_{\bF_\mu(J_T)} \leq C  \left( \|\varphi'(u)\|_\infty \| u\|_{\bF_\mu(J_T)} + \| \varphi(u)\|_\infty  \right) ,\quad u\in  \bF_\mu(J_T).
\end{equation}
The constant $C>0$ is uniform with respect to $T\in (0,T_*]$.
\end{lemma}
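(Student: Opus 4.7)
The plan is to exploit the common structure of the four candidate spaces $X$: under \eqref{indices cond} each one embeds continuously into $C(\overline\Omega)$, into $BUC(J_T;C^{k+1}(\overline\Omega))$, or into $BUC(J_T\times\partial\Omega)$ and is a multiplication algebra, with the algebra and embedding constants $T$-uniform in the time-dependent cases thanks to Lemma~\ref{lem:B-mu}(a)(d). The $C^1$-property of $\varphi$ as a Nemyskii operator then follows from the classical Taylor-identity argument: Fréchet-differentiability at $u\in X$ with candidate derivative $h\mapsto \varphi^\prime(u)h$ reduces to bounding
\begin{equation*}
\varphi(u+h)-\varphi(u)-\varphi^\prime(u)h = \int_0^1 (1-s)\,\varphi^{\prime\prime}(u+sh)\,h^2\,ds
\end{equation*}
in $X$, which is immediate from the algebra property and the uniform bound on $\varphi^{\prime\prime}$ over the compact set $\{u(y):y\in\overline\Omega\}$ (or the corresponding cylinder). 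Continuity of $u\mapsto \varphi^\prime(u)\in \mathcal L(X)$ is obtained by applying the analogous expansion to $\varphi^\prime\in C^4$. For the static cases the embeddings into $C^1(\overline\Omega)$ and $C^2(\overline\Omega)$ are \eqref{embedding-trace-space}; for $\bE_{2,\mu}^k(J_T)$ a standard trace/interpolation argument yields $\bE_{2,\mu}^k(J_T)\hookrightarrow BUC(J_T;W^{k+2\mu-2/p}_p(\Omega))\hookrightarrow BUC(J_T;C^{k+1}(\overline\Omega))$, and the algebra property is then provided by a Moser-type estimate together with Lemma~\ref{lem:B-mu}(a); the boundary case $X=\bF_\mu(J_T)$ is covered directly by Lemma~\ref{lem:B-mu}(d).

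The main remaining work is the uniform estimate \eqref{bdd boundary space} for $X=\bF_\mu(J_T)$. I would prove it by a pointwise mean-value argument in the intrinsic norms, splitting the space into its two factors. For the spatial factor, the Gagliardo seminorm obeys
\begin{equation*}
[\varphi(u(t,\cdot))]_{W^{1-1/p}_p(\partial\Omega)} \le \|\varphi^\prime(u)\|_\infty\,[u(t,\cdot)]_{W^{1-1/p}_p(\partial\Omega)}
\end{equation*}
by the mean value theorem applied inside the Gagliardo double-integral, while the $L_p(\partial\Omega)$-part is bounded by $|\partial\Omega|^{1/p}\|\varphi(u)\|_\infty$; integrating against $t^{(1-\mu)p}\,dt$ over $(0,T)\subseteq(0,T_*]$ produces the factor $\bigl(\int_0^{T_*} t^{(1-\mu)p}\,dt\bigr)^{1/p}$, which is finite (as $p(1-\mu)\ge 0$) and $T_*$-uniform. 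The temporal factor is treated identically inside the Slobodeckij seminorm \eqref{seminorm}: the pointwise bound $|\varphi(u(t,x))-\varphi(u(\tau,x))|\le \|\varphi^\prime(u)\|_\infty|u(t,x)-u(\tau,x)|$ gives
\begin{equation*}
[\varphi(u)]_{W^{1/2-1/2p}_{p,\mu}(J_T;L_p(\partial\Omega))} \le \|\varphi^\prime(u)\|_\infty\,[u]_{W^{1/2-1/2p}_{p,\mu}(J_T;L_p(\partial\Omega))},
\end{equation*}
with no extra constant, while the remaining $L_{p,\mu}(J_T;L_p(\partial\Omega))$-part is handled as before. Summing the four contributions yields \eqref{bdd boundary space} with a $T_*$-uniform $C$.

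The main obstacle I anticipate is precisely the $T$-uniformity of the constant. A naive approach via the interpolation characterization of $W^s_{p,\mu}$ (see Remark~\ref{rem: norms}) would fail, because the equivalence between interpolation and intrinsic norms blows up as $T\to 0^+$. The pointwise mean-value argument above is tailored to circumvent this: it works one pair $(t,\tau)$ at a time directly inside the intrinsic Slobodeckij integral \eqref{seminorm}, keeping the temporal weight $\tau^{(1-\mu)p}$ untouched, and the only $T$-dependent factor appearing, $\bigl(\int_0^T t^{(1-\mu)p}\,dt\bigr)^{1/p}$, is monotone in $T$ and hence controlled by its value at $T_*$. A related subtlety — that $\|\varphi(u)\|_\infty$ and $\|\varphi^\prime(u)\|_\infty$ implicitly use the embedding $\bF_\mu(J_T)\hookrightarrow BUC(J_T\times\partial\Omega)$ — is handled by invoking the $T_*$-uniform version of that embedding stated in Lemma~\ref{lem:B-mu}(d).
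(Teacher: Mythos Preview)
Your approach is correct and your explicit derivation of \eqref{bdd boundary space} via the pointwise mean-value inequality inside the intrinsic Slobodeckij integrals is exactly the right idea; the paper simply cites \cite[Lemma~4.2.3(a)]{MeyriesThesis} for this bound and then recovers $C^1$ on $\bF_\mu(J_T)$ from it by the same mean-value/algebra argument you describe. Your awareness that working in intrinsic norms is what makes the constant uniform in $T\in(0,T_*]$ (cf.\ Remark~\ref{rem: norms}) is on point.

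The genuine difference lies in the treatment of the static trace spaces $W^{2\mu-2/p}_p(\Omega)$ and $W^{2\mu+1-2/p}_p(\Omega)$. You handle them directly, as Banach algebras embedded in $C^1(\overline\Omega)$ resp.\ $C^2(\overline\Omega)$, via the Taylor remainder. The paper instead proves the $C^1$-property first on the \emph{time-dependent} spaces $\bE^k_{2,\mu}(J_T)$ by direct computation, and then transfers it to the static spaces for free through the trace--coretraction identity $\varphi(u)=\gamma_0\,\varphi(\gamma_0^c u)$, using the right inverse $\gamma_0^c$ from Lemma~\ref{lem:B-mu}(b). This trick avoids having to re-do any Nemyskii estimates in the fractional Sobolev scale on $\Omega$: since $\gamma_0$, $\gamma_0^c$ are bounded linear and $\varphi$ is already known to be $C^1$ on $\bE^k_{2,\mu}$, the composition is automatically $C^1$ on $X_{\gamma,\mu}$-type spaces. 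Your route is more self-contained and reusable; the paper's route is shorter once $\bE^k_{2,\mu}$ is in hand.

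One small imprecision in your Taylor step: controlling $\int_0^1(1-s)\varphi''(u+sh)h^2\,ds$ in $X$ requires $\|\varphi''(u+sh)\|_X$, not merely the sup-norm of $\varphi''$ on the range of $u$. Multiplication by an $L_\infty$ function is not bounded on fractional Sobolev spaces. What you actually need is the analogue of \eqref{bdd boundary space} for $\varphi''$ on each $X$ (so that $\|\varphi''(u+sh)\|_X$ is bounded uniformly in $s$), and only then invoke the algebra property. You already have the mechanism for this --- the same pointwise mean-value argument you wrote out for $\bF_\mu$ works verbatim for the other three spaces --- so state that step explicitly before invoking Taylor.
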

%%%%%%%%%%%%%
\begin{proof}
The mapping property $\varphi\in C^1(\bE_{2,\mu}^k(J_T))$  can be proved via direct computations   and the fact that $\varphi \in C^5(\bR)$.
It follows from  Lemma~\ref{lem:B-mu}(b)  that there exists a bounded right inverse $\gamma^c_0$ for the initial trace operator
$$
\gamma_0: \bE_{2,\mu}^0(J_T) \to W^{2\mu-2/p}_p(\Omega).
$$
The $C^1$-continuity of $\varphi$ in $W^{2\mu-2/p}_p(\Omega)$ then follows from the relationship
$$
\varphi(u)= \gamma_0 \varphi(\gamma^c_0(u)), \quad u\in W^{2\mu-2/p}_p(\Omega).
$$ 
The case $X=W^{2\mu+1-2/p}_p(\Omega)$ follows from a similar argument. The assertion in 
\eqref{bdd boundary space} has been proved in \cite[Lemma~4.2.3(a)]{MeyriesThesis}.  A close look at its proof shows that the constant in~\cite[Lemma~4.2.3(a)]{MeyriesThesis} is uniform with respect to $T\in (0,T_*]$.
The $C^1$-continuity of $\varphi$ in $\bF_\mu (J_T) $ can be derived from  \eqref{bdd boundary space} by  a mean value theorem argument  and the fact that $\bF_\mu(J_T)$ is a Banach algebra.
 \end{proof}

%%%%%%%%%%%%%%%%%%%%%%%%%%%%%%%%%%%%%%%%%%
%mapping property
%%%%%%%%%%%%%%%%%%%%%%%%%%%%%%%%%%%%%%%%%%
Next, we will establish some relevant mapping properties of the operators in \eqref{nonlinear abstract equation}. 
For the analysis below,
note that by  Proposition~\ref{extension-zero} and  \cite[Theorems 4.2 and 4.5]{MeSc12},
there exists  a constant $C>0$ such that
\begin{equation}
\label{boundary embedding F1}
\| {\rm tr}_{\partial\Omega} v\|_{\bF_{1,\mu}(J_T)} \leq C \|v\|_{\bE_{2,\mu}^0(J_T) }, \quad v\in \bE_{2,\mu}^0(J_T),
\end{equation}
where   the embedding constant is independent of $T$ if $v\in \zbE_{2,\mu}^0(J_T)$.

Suppose that $\phi_1\in C^5(\bR^{16}  ) $, $\phi_2\in C^5(\bR^{48}  )$ and  $\phi_3\in C^5(\bR^{9}  )$.
In order to derive an estimate for 
$$\|\cA(z_1+z_2) - \cA(z_1) -\cA'(z_1)z_2\|_{\bE_{0,\mu}(J_T)}$$ 
for proper functions $z_1,z_2\in \bE_{1,\mu}(J_T)$, we will consider five types of mappings, given by
\begin{equation}
\label{Gi}
\begin{aligned}
G_1 (z) & =  \phi_1( z) \phi_2(\partial z), \\
G_2 (z) & = \phi_1 (z)   \partial_{ij} z, \\
G_3 (z)& = \phi_1 (z) \phi_3(\partial m), \\
G_4 (z)& = \phi_1 (z) \phi_3(\partial m) \partial_{ij} m,   \\
G_5 (z) & = \phi_1(z) |\Delta m |^2,
\end{aligned}
\end{equation}
where for any  function $z=(u,F,\theta,m)\in C^1(\Omega,\bR^{16})$, we define
$ \partial z\in C(\Omega, \bR^{48})$ by $\partial z=( \partial_1 z, \partial_2 z, \partial_3 z),$ 
and
$\partial m\in C(\Omega, \bR^9)$ by $\partial m= ( \partial_1 m, \partial_2 m, \partial_3 m) .$

All   terms  in $\cA(z)$ can be estimated by using one of the functions $G_i$. 
For instance, 
\begin{itemize}
\vspace{1mm}\item 
terms like $\mu'(\theta)\partial_i \theta \partial_i u$ can be estimated by using $G_1$
 with $\phi_1(z)=\mu'(\theta)$ and $\phi_2(\partial z)=\partial_i \theta \partial_i u$; 
\vspace{1mm}\item 
terms like $\mu(\theta) \partial_{ii} u$ can be estimated by using $G_2$ with $\phi_1(z)=\mu(\theta)$; 
\vspace{1mm}\item
the term $K(z):\nabla^2 \theta=K_{ij}(z)\partial_{ij}\theta $ can be estimated by using $G_2$ with $\phi_1(z)=  K_{ij}(z)$;
\vspace{1mm}\item
terms like $\alpha(\theta) |m|^2 |\nabla m|^2$ can be estimated by $G_3$ with $\phi_1(z)=\alpha(\theta) |m|^2 $ and $\phi_3(\partial m)=|\nabla m|^2$;
\vspace{1mm}\item
the scalar components of $(\alpha(\theta)  I_3  - \beta(\theta)M(m))\Delta m$
 can be estimated by using $G_4$ with  $\phi_3\equiv 1$ and $\phi_1$ properly chosen;
 \vspace{1mm}\item
 the term $\alpha(\theta) |\nabla m|^2 \;m\cdot \Delta m$, appearing in the $\theta$-equation,  can be estimated by using $G_4$ 
 with $\phi_3(\partial m)= |\nabla m|^2$ and $\phi_1$ properly chosen;
 \vspace{1mm}\item
 lastly, the term $\alpha (\theta) |\Delta m|^2$ can be estimated by using $G_5$ with $\phi_1(z)=\alpha(\theta)$.
\end{itemize}
%%%%%%%%%%%%%
\begin{lemma}
\label{lem:mapping property-linearization}
Let the functions $G_i, 1\le i\le 5$ be given by \eqref{Gi}. Then
$$G_1, G_2, G_5 \in C^1(\bE_{1,\mu} (J_T) , \bE_{0,\mu}^0 (J_T) ),\quad  G_3, G_4 \in C^1(\bE_{1,\mu} (J_T) , \bE_{0,\mu}^1 (J_T) ),$$
where $\bE_{0,\mu}^k (J_T)=L_{p,\mu}(J_T; W^k_p(\Omega))$, $k=0,1$.
Furthermore, given $T_0, R_0 >0$, then for any $T\in (0,T_0]$, $R\in (0,R_0]$ and 
any $z_1=(u_1,F_1,\theta_1,m_1) \in \bE_{1,\mu} (J_T)$ and $z_2=(u_2,F_2,\theta_2,m_2)\in \zbE_{1,\mu} (J_T)$ satisfying
\begin{align*}
 \|z_1\|_{\bB_\mu(J_T)},\; \|z_1\|_{\bE_{1,\mu} (J_T)},\;  \|z_2\|_{\bE_{1,\mu} (J_T)} \le  R,
\end{align*}
the following estimate holds
\begin{align}
\label{est Gi}
\begin{split}
\| G_i (z_1+z_2)  - G_i(z_1) - G_i'(z_1)z_2 \|_{\bE_{0,\mu}^0(J_T)} & \le \Phi(\|z_2\|_{\bE_{1,\mu} (J_T)} ) \|z_2\|_{\bE_{1,\mu} (J_T)} ,\quad i=1,2,5,\\
\| G_i (z_1+z_2)  - G_i(z_1) - G_i'(z_1)z_2 \|_{\bE_{0,\mu}^1(J_T)} & \le \Phi(\|z_2\|_{\bE_{1,\mu} (J_T)} ) \|z_2\|_{\bE_{1,\mu} (J_T)} ,\quad i=3,4,
\end{split}
\end{align} 
where $G_i'$ is the Frech\'et derivative of $G_i$.
\end{lemma}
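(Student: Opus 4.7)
The plan is to establish the claimed $C^1$-mapping property of each $G_i$ and then derive the second-order remainder estimate~\eqref{est Gi}. First I would assemble the relevant embeddings. By Lemma~\ref{lem:B-mu}(a) and \eqref{embedding-trace-space}, any $z=(u,F,\theta,m)\in \bE_{1,\mu}(J_T)$ satisfies
\begin{equation*}
u,F,\theta \in BUC(J_T; C^1(\bar\Omega)), \qquad m\in BUC(J_T; C^2(\bar\Omega)),
\end{equation*}
while simultaneously $z\in L_{p,\mu}(J_T; W^2_p(\Omega))$ and $m\in L_{p,\mu}(J_T; W^3_p(\Omega))$. Moreover, the embedding constant for $\zbE_{1,\mu}(J_T)\hookrightarrow {_0}\bB_\mu(J_T)$ is independent of $T$, so every quantity above applied to $z_2$ is controlled by $\|z_2\|_{\bE_{1,\mu}(J_T)}$ with a $T$-uniform constant on $(0,T_0]$.

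Next, for the $C^1$-mapping property, I would combine Lemma~\ref{lem: Nemyskii} (which gives $C^1$-smoothness of $\phi_k$ as a Nemytskii map on $W^{2\mu-2/p}_p$ and on $W^{2\mu+1-2/p}_p$) with the chain rule to see that $\phi_1(z)$, $\phi_2(\partial z)$, $\phi_3(\partial m)$ all depend $C^1$-smoothly on $z$ with values in $BUC(J_T; C(\bar\Omega))$, a pointwise multiplier on both $L_p(\Omega)$ and $W^1_p(\Omega)$. Multiplying these against $\partial_{ij}z$, $\partial_{ij} m$, or $|\Delta m|^2$ and exploiting the regularities listed above yields $G_1,G_2,G_5\in C^1(\bE_{1,\mu}(J_T),\bE_{0,\mu}^0(J_T))$ and $G_3,G_4\in C^1(\bE_{1,\mu}(J_T),\bE_{0,\mu}^1(J_T))$.

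For \eqref{est Gi}, I would apply the second-order Taylor identity
\begin{equation*}
G_i(z_1+z_2)-G_i(z_1)-G_i'(z_1)z_2=\int_0^1(1-\tau)\, G_i''(z_1+\tau z_2)[z_2,z_2]\,d\tau,
\end{equation*}
reducing the task to estimating $\|G_i''(z)[z_2,z_2]\|$ uniformly for $\|z\|_{\bE_{1,\mu}(J_T)}\le 2R_0$. In each case $G_i''(z)[z_2,z_2]$ is a sum of products of the form (smooth function of $z$)\,$\cdot$\,($z_2$-derivative)\,$\cdot$\,($z_2$-derivative or $z$-derivative). The recipe is to place one $z_2$-factor in $L_\infty(J_T\times\bar\Omega)$ via the $T$-uniform trace embedding (absorbing it as $\Phi(\|z_2\|_{\bE_{1,\mu}})$) and to keep the other factor in $L_{p,\mu}(J_T;L_p)$ or $L_{p,\mu}(J_T;W^1_p)$, producing the required $\|z_2\|_{\bE_{1,\mu}}^2$-type bound; for the $W^1_p$-valued targets ($i=3,4$) one applies $\nabla$ and uses the extra derivative of $m$ in $L_{p,\mu}(J_T;W^3_p)$ to absorb the highest-order term.

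The main obstacle, and the case where the extra regularity of $m$ really matters, will be $G_5(z)=\phi_1(z)|\Delta m|^2$, whose second variation contains the essentially irreducible top-order quadratic $\phi_1(z_1+\tau z_2)|\Delta m_2|^2$. Here I would use the bound
\begin{equation*}
\|\Delta m_2\|_{L_\infty(J_T\times\bar\Omega)}\le C\|m_2\|_{BUC(J_T; C^2(\bar\Omega))}\le C\|z_2\|_{\bB_\mu(J_T)}\le C\|z_2\|_{\bE_{1,\mu}(J_T)},
\end{equation*}
with the last constant independent of $T\in(0,T_0]$ because $z_2\in\zbE_{1,\mu}(J_T)$, to absorb one factor of $\Delta m_2$ in $L_\infty$ and leave the other in $L_{p,\mu}(J_T;L_p)$, giving the estimate with $\Phi(r)=Cr$. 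The same device, combined with Lemma~\ref{lem: Nemyskii} applied to $\phi_1'(z),\phi_1''(z)$, handles the cross and remainder terms arising from expanding $\phi_1(z_1+z_2)$ and $|\Delta(m_1+m_2)|^2$ separately.
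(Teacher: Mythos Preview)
Your proposal is correct and reaches the same conclusion, but the route differs from the paper's in one organizational respect. You invoke the second-order Taylor identity
\[
G_i(z_1+z_2)-G_i(z_1)-G_i'(z_1)z_2=\int_0^1(1-\tau)\,G_i''(z_1+\tau z_2)[z_2,z_2]\,d\tau,
\]
which presupposes $G_i\in C^2(\bE_{1,\mu},\bE_{0,\mu}^k)$; this is stronger than the $C^1$ regularity asserted in the lemma, so you would first have to establish it (easy, since $\phi_k\in C^5$, but it is an extra step). The paper sidesteps this by exploiting the product structure of each $G_i$ directly: it writes, e.g.,
\[
G_1(z_1+z_2)-G_1(z_1)-G_1'(z_1)z_2
=(\phi_1(z_1+z_2)-\phi_1(z_1)-\phi_1'(z_1)z_2)\phi_2(\partial z_1)+\cdots
\]
and then applies only the \emph{first}-order mean value theorem (twice, to the scalar factors $\phi_k$), never needing a second Fr\'echet derivative of $G_i$ as a Banach-space map. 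The term-by-term estimates you outline (placing one $z_2$-factor in $L_\infty$ via the $T$-uniform embedding of $\zbE_{1,\mu}\hookrightarrow{_0}\bB_\mu$, the other in $L_{p,\mu}$) are exactly what the paper does as well, and your treatment of the delicate $|\Delta m_2|^2$ term in $G_5$ matches the paper's use of the extra regularity $m\in BU\!C(J_T;C^2(\bar\Omega))$. In short: your approach is cleaner and more systematic once $C^2$ is in hand; the paper's is slightly more ad hoc but stays strictly within the $C^1$ framework actually claimed.
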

%%%%%%%%%
\begin{proof}
The continuous differentiability of $G_i$ follows by direct computations.
We will  only establish the estimates in \eqref{est Gi}.
Easy computations lead to
\begin{align*}
& G_1 (z_1+z_2)  - G_1(z_1) - G_1'(z_1)z_2 \\
& = \left( \phi_1(z_1 +z_2)   - \phi_1(z_1  )  -   \phi_1'(z_1 )z_2)\right)\phi_2(\partial z_1 ) \\
& \quad   + \phi_1(z_1 +z_2) \left( \phi_2(\partial z_1 + \partial z_2)  - \phi_2(\partial z_1  ) -\phi_2'(\partial z_1)\partial z_2   \right) \\
 & \quad +  \left( \phi_1(z_1 +z_2)    - \phi_1(z_1  )    \right)  \phi_2'(\partial z_1)\partial z_2.  
\end{align*}
Then the mean value theorem,  Lemma~\ref{lem:B-mu}(a), \eqref{embedding-trace-space} imply
\begin{align*}
& \left\| \left( \phi_1(z_1 +z_2)   - \phi_1(z_1  )  -   \phi_1'(z_1 )z_2)\right)\phi_2(\partial z_1 )\right\|_{\bE_{0,\mu}^0(J_T)} \\
& \le  \|\phi_2(\partial z_1 ) \|_\infty \; \| z_2\|_\infty \; \int_0^1 \left\|  \phi_1'(z_1 +\sigma z_2) - \phi_1'(z_1) \right\|_{\bE_{0,\mu}^0(J_T)}\, d\sigma \\
& \le  \|\phi_2(\partial z_1 ) \|_\infty \; \| z_2\|_\infty \; \int_{[0,1]\times [0,1]} \left\|  \phi_1''(z_1 +\tau \sigma z_2)\right\|_\infty \; \left\| z_2  \right\|_{\bE_{0,\mu}^0(J_T)}\, d\sigma \, d\tau \\
& \le \Phi(\|z_2\|_{\bE_{1,\mu} (J_T)} ) \|z_2\|_{\bE_{1,\mu} (J_T)} .
\end{align*}
In the above, $\phi'(z)$ denotes the Frech\'et derivative of the Nemyskii operator induced by $\phi$ and  we have used the fact that $\phi'(z) =  \sum_{j=1}^{16}   \partial_{j}   \phi (z)  \otimes e_j $, where   $\partial_{j} \phi $  is the partial derivative of $\phi$.  %with respect to the $j$-th position. 
We will take advantage of this observation in the sequel.
Note that the function $\Phi$ above is uniform with respect to $T\in (0,T_0]$
in view of Lemma~\ref{lem:B-mu}(a). Estimating in the same way, we have
\begin{align*}
  &\| \phi_1(z_1 +z_2) \left( \phi_2(\partial z_1 + \partial z_2)  - \phi_2(\partial z_1  ) -\phi_2'(\partial z_1)\partial z_2   \right) \|_{\bE_{0,\mu}^0(J_T)}\\
   &\quad \le \Phi(\|z_2\|_{\bE_{1,\mu} (J_T)} ) \|z_2\|_{\bE_{1,\mu} (J_T)} .
  \end{align*}
The remaining terms can be estimated again by using the mean value theorem   as follows
\begin{align*}
   & \|\left( \phi_1(z_1 +z_2)    - \phi_1(z_1  )    \right)  \phi_2'(\partial z_1)\partial z_2  \|_{\bE_{0,\mu}^0(J_T)} \\
& \quad \le \|  \phi_2'(\partial z_1)\|_\infty \;  \|\partial z_2\|_\infty   \; \int_0^1  \left(\| \phi_1'(z_1 + \sigma z_2)\|_\infty  \; \| z_2 \|_{\bE_{0,\mu}^0(J_T)} \right)\, d\sigma \\
& \quad \le \Phi(\|z_2\|_{\bE_{1,\mu} (J_T)} ) \|z_2\|_{\bE_{1,\mu} (J_T)} .
\end{align*}
The estimate for $G_3$ and $G_5$ can be obtained in the same manner in view of the additional regularity of $m$.

The estimate for $G_2$ will be slightly different in the sense that we need to evaluate $\partial_{ij} z_k$, $k=1,2$, by using the $\bE_{0,\mu}^0(J_T)$-norm.
First, notice that
\begin{align*}
& G_2 (z_1+z_2)  - G_2(z_1) - G_2'(z_1)z_2, \\
& = \left( \phi_1(z_1 +z_2)    - \phi_1(z_1  )   -    \phi_1'(z_1 )z_2
  \right)  \partial_{ij} z_1    + \left( \phi_1(z_1 +z_2)   -  \phi_1(z_1  )    \right) \partial_{ij} z_2 .
\end{align*}
Then
\begin{align*}
&\| \left( \phi_1(z_1 +z_2)    - \phi_1(z_1  )   -    \phi_1'(z_1 )z_2
  \right)  \partial_{ij} z_1\|_{\bE_{0,\mu}^0(J_T)} \\
  & \le \|\partial_{ij} z_1\|_{\bE_{0,\mu}^0(J_T)} \; \|z_2\|_\infty \; \int_0^1 \left\|  \phi_1'(z_1 +\sigma z_2) - \phi_1'(z_1) \right\|_\infty \, d\sigma \\
  & \le  \Phi(\|z_2\|_{\bE_{1,\mu} (J_T)} ) \|z_2\|_{\bE_{1,\mu} (J_T)} .
\end{align*}
Similarly,
\begin{align*}
\| \left( \phi_1(z_1 +z_2)   -  \phi_1(z_1  )    \right) \partial_{ij} z_2\|_{ \bE_{0,\mu}^0(J_T) } \leq  \Phi(\|z_2\|_{\bE_{1,\mu} (J_T)} ) \|z_2\|_{\bE_{1,\mu} (J_T)} .
\end{align*}
The estimate for $G_4$  
can be derived in a similar way by utilizing the additional regularity of $m$ and the facts that
\begin{align*}
 \|G_6(z_1 + z_2) -G_6(z_1) -G_6'(z_1)z_2 \|_{C([0,T];C^1(\overline{\Omega}))} 
& \leq 
   \Phi(\| z_2\|_{C([0,T];C^1(\overline{\Omega}))}) \| z_2\|_{C([0,T];C^1(\overline{\Omega}))} ,  \\
 \|G_6(z_1 + z_2) -G_6(z_1)   \|_{C([0,T];C^1(\overline{\Omega}))}
 & \leq 
  M \| z_2\|_{C([0,T];C^1(\overline{\Omega}))}  ,
\end{align*}
where $G_6(z)=\phi_1(z) \phi_3(\partial m)$.
\end{proof}

We are now ready to establish the differentiability of $(\sA,\sB,\sF)$ as operators defined on $\bE_{1,\mu}(J_T)$.
%%%%%%%%%%%%%%%
\begin{proposition}\label{Prop: mapping properties}
Assume \eqref{assumption} and \eqref{indices cond}.  Then
\begin{equation*}
%\label{AFB-prime}
\begin{aligned}
\cA & \in C^1(\bE_{1,\mu}(J_T),  \bE_{0,\mu}(J_T)),                     \quad &&  \cA^\prime (z_*)z=  \sA(z_*)z + [\sA^\prime(z_*)z] z_*, \\
\sF & \in C^1(\bE_{1,\mu}(J_T),\bE_{0,\mu}(J_T))),                       \quad && \\
\cB & \in C^1(\bE_{1,\mu}(J_T),  \bF_\mu(J_T)) ,                           \quad &&  \cB^\prime (z_*)z= \sB(z_*)z + [\sB^\prime(z_*)z] z_*,
\end{aligned}
\end{equation*} 
for $z_*, z\in \bE_{1,\mu}(J_T)$,   where the mappings $(\cA, \cB)$ were introduced in~\eqref{AB}.  
Moreover, given $T_0, R_0 >0$, then   for any $T\in (0,T_0]$, $R\in (0,R_0]$ 
 and any $z_*\in \bE_{1,\mu}(J_T)$, $z\in \zbE_{1,\mu}(J_T)$  satisfying
\begin{align*}
  \| {\rm tr}_{\partial\Omega} z_*\|_{\bF_\mu(J_T)},\; \|{\rm tr}_{\partial\Omega} \nabla z_*\|_{\bF_\mu(J_T)}, \;  \|z_* \|_{ \bB_\mu(J_T) },\; \| z_*\|_{\bE_{1,\mu}(J_T)},\;   \|z\|_{\bE_{1,\mu}(J_T)}\leq R,
\end{align*}
the following estimates hold
\begin{align}\label{Frechet der est}
\begin{split}
\| \cA (z_* + z)   -   \cA(z_*)    -\cA'(z_*)z    \|_{\bE_{0,\mu}(J_T)} & \le  \Phi(\|z\|_{\bE_{1,\mu}(J_T)} ) \|z\|_{\bE_{1,\mu}(J_T)},  \\
\| \sF (z_* + z)  - \sF(z_*) - \sF'(z_*) z \|_{\bE_{0,\mu}(J_T)} & \le  \Phi(\|z\|_{\bE_{1,\mu}(J_T)} ) \|z\|_{\bE_{1,\mu}(J_T)}, \\
\| \cB (z_* + z)   - \cB(z_*)  -  \cB'(z_*)z    \|_{\bF_\mu(J_T)} & \le  \Phi(\|z\|_{\bE_{1,\mu}(J_T)} ) \|z\|_{\bE_{1,\mu}(J_T)}. 
\end{split}
\end{align}
If, in addition, $\bar{z}\in \bE_{1,\mu}(J_T)$  with $z_*(0)=\bar{z}(0)$ satisfies
\begin{align*}
 \| {\rm tr}_{\partial\Omega} \bar{z}\|_{\bF_\mu(J_T)}, \| {\rm tr}_{\partial\Omega} \nabla \bar{z}\|_{\bF_\mu(J_T)} ,  \| \bar{z}\|_{\bE_{1,\mu}(J_T)}, \|\bar{z}\|_{\bB_\mu(J_T)} \leq R,
\end{align*}
%similarly, for all 
%$z_*, \bar{z}\in \bE_{1,\mu}(J_T)$ and $z\in \zbE_{1,\mu}(J_T)$ satisfying $z_*(0)=\bar{z}(0)$ and
%\begin{align*}
% \| z_*\|_{\bE_{1,\mu}(J_T)}, \|z_*\|_{C([0,T]; X_{\gamma,\mu})}, \| \bar{z}\|_{\bE_{1,\mu}(J_T)},  \|\bar{z}\|_{C([0,T]; X_{\gamma,\mu})}, \|z\|_{\bE_{1,\mu}(J_T)}\leq R 
%\end{align*}
%and
%\begin{align*}
%  \| z_*\|_{\bF_\mu(J_T)}, \| \nabla z_*\|_{\bF_\mu(J_T)}, \| \bar{z}\|_{\bE_{1,\mu}(J_T)} , \| \nabla \bar{z}\|_{\bF_\mu(J_T)} \leq R ,
%%\|z_* \|_{ W^{1-1/2p}_{p,\mu}((0,T);L_p(\partial\Omega ))\cap L_{p,\mu}((0,T); W^{2-1/p}_p (\partial\Omega ))} , \|\bar{z} \|_{ W^{1-1/2p}_{p,\mu}((0,T);L_p(\partial\Omega ))\cap L_{p,\mu}((0,T); W^{2-1/p}_p (\partial\Omega ))}  \leq R ,
%\end{align*}
then the following estimates hold
\begin{align}\label{Frechet der est 2}
\begin{split}
\| \cA'(z_*) z - \cA'(\bar{z}) z \|_{\bE_{0,\mu}(J_T)} & \le  \Phi(\|z_* - \bar{z}\|_{\bE_{1,\mu}(J_T)} ) \|z\|_{\bE_{1,\mu}(J_T)},  \\
\| \sF'(z_*) z - \sF'(\bar{z}) z \|_{\bE_{0,\mu}(J_T)} & \le  \Phi(\|z_* - \bar{z}\|_{\bE_{1,\mu}(J_T)} ) \|z\|_{\bE_{1,\mu}(J_T)}, \\
\|  \cB'(z_*) z - \cB'(\bar{z}) z  \|_{\bF_\mu(J_T)} & \le  \Phi(\|z_*-\bar{z}\|_{\bE_{1,\mu}(J_T)} ) \|z\|_{\bE_{1,\mu}(J_T)}. 
\end{split}
\end{align} 
\end{proposition}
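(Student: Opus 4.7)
My plan is to derive all three continuity/remainder estimates in \eqref{Frechet der est} and the Lipschitz estimates in \eqref{Frechet der est 2} from the single-step reductions already developed in Lemma~\ref{lem:mapping property-linearization} and Lemmas~\ref{lem: Nemyskii}--\ref{lem: multiplication}. For the interior operators $\cA$ and $\sF$, the main observation is that each scalar component, when written out in coordinates, is a finite sum of terms of one of the five prototypical forms $G_1,\ldots, G_5$ from \eqref{Gi}. For instance, expanding $\sA(z)z$ yields: terms like $\upmu'(\theta)\partial_i\theta\,\partial_i u$ and similarly for $F$ that are of type $G_1$; highest-order terms $\upmu(\theta)\partial_{ii}u$, $\kappa(\theta)\partial_{ii}F$, $K_{ij}(z)\partial_{ij}\theta$ of type $G_2$; the term $\alpha(\theta)|\nabla m|^2 m$ of type $G_3$; the scalar components of $(\alpha(\theta)I_3-\beta(\theta)\sM(m))\Delta m$ of type $G_4$; and the projector $P_H$ together with $\cC^1, \cC^3$ only contributes harmless bounded linear factors and additional $G_3,G_4,G_5$-type terms coming from $\cC^3(\tilde\theta,\tilde m)\tilde m$. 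Similarly, $\sF$ consists entirely of quadratic products of the form $\phi(\theta)|\nabla u|^2$, $\phi(\theta)|\nabla F|^2$, $u\cdot\nabla u$, $(\nabla u)^\sT F$, $u\cdot\nabla F$, $u\cdot\nabla\theta$, $u\cdot\nabla m$, and $\nabla\cdot(FF^\sT)$, all of which are of type $G_1$ (with $m$-terms falling under the higher-regularity cases $G_3,G_4$ when $m$ appears in the outer factor). Collecting these and invoking Lemma~\ref{lem:mapping property-linearization} termwise yields both the $C^1$ statement and the second-order Taylor estimate in $\bE_{0,\mu}(J_T)$, uniformly in $T\in (0,T_0]$.

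For the boundary operator, I first write
\[
\cB(z_*+z)-\cB(z_*)-\cB'(z_*)z = \nu\cdot\mathrm{tr}_{\partial\Omega}\!\Big[\big(K(z_*+z)-K(z_*)-K'(z_*)z\big)\nabla\theta_* + \big(K(z_*+z)-K(z_*)\big)\nabla\theta\Big].
\]
By Lemma~\ref{lem: Nemyskii}, the Nemyskii operator induced by $K$ (and by $K'$, $K''$) maps $\bF_\mu(J_T)$ to $\bF_\mu(J_T)$ with norm bound independent of $T\in(0,T_0]$. A standard mean value argument (analogous to the one used in the proof of Lemma~\ref{lem:mapping property-linearization}), together with the multiplication estimate of Lemma~\ref{lem: multiplication}(i) applied to the product of the trace of the $K$-remainder with $\mathrm{tr}_{\partial\Omega}\nabla\theta_*$ (which lies in $\zbF_\mu(J_T)$ after extracting the initial trace by means of Lemma~\ref{lem:B-mu}(b) and using the assumption $z\in \zbE_{1,\mu}$), then yields
\[
\|\cB(z_*+z)-\cB(z_*)-\cB'(z_*)z\|_{\bF_\mu(J_T)}\le \Phi(\|z\|_{\bE_{1,\mu}(J_T)})\|z\|_{\bE_{1,\mu}(J_T)}.
\]
The $C^1$-statement for $\cB$ follows from this estimate together with the identification $\cB'(z_*)z=\sB(z_*)z+[\sB'(z_*)z]z_*$, and the formula for $\cA'(z_*)z$ is obtained by the same product rule computation.

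For the Lipschitz-in-base-point estimates \eqref{Frechet der est 2}, I would write, e.g.,
\[
\cA'(z_*)z-\cA'(\bar z)z = \int_0^1 \frac{d}{d\sigma}\cA'(\bar z+\sigma(z_*-\bar z))z\,d\sigma,
\]
and expand the integrand using the product rule; each resulting term is again a sum of products that match the $G_i$ templates (with one factor being the difference $z_*-\bar z$). The estimates from Lemma~\ref{lem:mapping property-linearization}, applied on the straight-line path between $\bar z$ and $z_*$ (which stays in a ball of radius $\sim R$), together with Lemma~\ref{lem: multiplication} for the boundary part, then yield the desired bounds.

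The main technical difficulty, and the only part not already handled directly by the auxiliary lemmas, is the boundary estimate for $\cB$: to apply Lemma~\ref{lem: multiplication}(i) with a constant uniform in $T$, one must ensure that the factor that comes from the $K$-remainder lies in $\zbF_\mu(J_T)$. This is subtle because $z_*$ itself need not have vanishing trace; the trick is to isolate the initial trace $\gamma_0 z_*$ and use that $z\in \zbE_{1,\mu}(J_T)$ to transfer the "zero initial value" to the nonlinear remainder via a Taylor expansion with integral remainder, exactly as in the interior proof of Lemma~\ref{lem:mapping property-linearization}. The $T$-uniformity then follows by combining Proposition~\ref{extension-zero} with \eqref{boundary embedding F1} and the uniform multiplier estimate of Lemma~\ref{lem: multiplication}.
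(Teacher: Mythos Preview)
Your approach is essentially the same as the paper's: the interior estimates for $\cA$ and $\sF$ via the termwise reduction to the templates $G_1,\ldots,G_5$ of Lemma~\ref{lem:mapping property-linearization} is exactly how the paper proceeds, and your boundary decomposition
\[
\cB(z_*+z)-\cB(z_*)-\cB'(z_*)z
  = \nu\cdot\mathrm{tr}_{\partial\Omega}\big[(K(z_*+z)-K(z_*)-K'(z_*)z)\nabla\theta_*
    + (K(z_*+z)-K(z_*))\nabla\theta\big]
\]
is also correct.

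There is, however, an imprecision in how you obtain the $T$-uniform boundary bound. You invoke Lemma~\ref{lem: multiplication}(i), which requires \emph{both} factors to lie in $\zbF_\mu(J_T)$. While the $K$-remainder does vanish at $t=0$ (since $z(0)=0$), the factor $\mathrm{tr}_{\partial\Omega}\nabla\theta_*$ in the first term need not; your ``extract the initial trace of $z_*$'' maneuver does not place $\nabla\theta_*$ into $\zbF_\mu$. The paper instead uses Lemma~\ref{lem: multiplication}(ii): writing the remainder in integral form $\big[\int_0^1(K'(z_*+\sigma z)-K'(z_*))\,d\sigma\big]z$, one isolates the factor $\mathrm{tr}_{\partial\Omega}z\in \zbF_{1,\mu}(J_T)$ (cf.~\eqref{F-mu-1} and \eqref{boundary embedding F1}), while the remaining factors $K'(\cdot)$ and $\nabla\theta_*$ are only required to be bounded in $\bF_\mu(J_T)$---which is precisely why the hypotheses $\|\mathrm{tr}_{\partial\Omega}z_*\|_{\bF_\mu},\,\|\mathrm{tr}_{\partial\Omega}\nabla z_*\|_{\bF_\mu}\le R$ appear in the statement. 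The same correction applies to your treatment of \eqref{Frechet der est 2}: the paper decomposes $\cB'(z_*)z-\cB'(\bar z)z$ directly into $(\sB(z_*)-\sB(\bar z))z$ and $[\sB'(z_*)z]z_*-[\sB'(\bar z)z]\bar z$, and in each piece applies Lemma~\ref{lem: multiplication}(ii) using that $\mathrm{tr}_{\partial\Omega}z\in\zbF_{1,\mu}$, respectively $\mathrm{tr}_{\partial\Omega}(z_*-\bar z)\in\zbF_{1,\mu}$ (thanks to $z_*(0)=\bar z(0)$).
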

\begin{proof}
The continuous differentiability of $\cA$ and $\sF$ and the first two estimates in \eqref{Frechet der est} are immediate 
consequences of Lemma~\ref{lem:mapping property-linearization}. 
The continuous differentiability of $\cB$ is a direct consequence of Lemma~\ref{lem: Nemyskii} and the fact that $\bF_\mu(J_T) $ is a Banach algebra.
To establish the last estimate in \eqref{Frechet der est},   we set $z=(z_j)_{j=1}^{16} =(u ,F ,\theta ,m )$
and
$ z_*=(u_*,F_*,\theta_*,m_*)$.
Then we can apply  \eqref{bdd boundary space}, \eqref{boundary embedding F1}, Lemma~\ref{lem: multiplication}(i) and (ii), 
Proposition~\ref{extension-zero}, and \cite[Theorem  4.5]{MeSc12} to obtain (where we suppress ${\rm tr}_{\partial \Omega}$ in the following computations)
\begin{align*}
& \| \cB  (z_*+z)  - \cB (z_*) - \cB'(z_*)z \|_{\bF_\mu (J_T)}  \\
& \le \left\|   \left[ \left(\int_0^1 \left( K'(z_*+\sigma z ) - K'(z_*) \right) \, d\sigma \right) z \right] \nabla  \theta_*  \right\|_{\bF_\mu (J_T)} \\
& \le C \|\nabla \theta_* \|_{\bF_\mu (J_T)}  \left\|  \int_0^1 \left( K'(z_*+\sigma z ) - K'(z_*) \right) \, d\sigma  \right\|_{\bF_\mu (J_T)}   \|z\|_{\bF_{1,\mu}(J_T)} \\
& \quad + C \left\|  \int_0^1   K'(z_*+\sigma z )   \, d\sigma   \right\|_{\bF_\mu (J_T)} \left\|  \nabla \theta   \right\|_{\bF_\mu (J_T)} \|z\|_{\bF_{1,\mu}(J_T)}\\
& \le \Phi( \|z\|_{\bE_{1,\mu} (J_T)} ) \|z\|_{\bE_{1,\mu} (J_T)} .
\end{align*}
This establishes the last estimate in \eqref{Frechet der est}.

Concerning the estimates in \eqref{Frechet der est 2}, we will only establish the last one. The remaining two follow from a similar argument.  
\begin{align*}
 \|  \cB'(z_*) z - \cB'(\bar{z}) z  \|_{\bF_\mu(J_T)} 
& = \|  \sB (z_*  )z  -   \sB(\bar{z}) z  -  [\sB'(z_*)z] z_*  + [\sB'(\bar{z})z] \bar{z}  \|_{\bF_\mu(J_T)}  \\
& \le \|  \sB (z_*  )z  -   \sB(\bar{z}) z   \|_{\bF_\mu(J_T)} + \| [\sB'(z_*)z] z_*  - [\sB'(\bar{z})z] \bar{z}  \|_{\bF_\mu(J_T)}.
\end{align*}
Let $\bar{z}=(\bar{u},\bar{F},\bar{\theta},\bar{m})$.
Then the first term on the right hand side can be estimated by using \eqref{bdd boundary space}, \eqref{boundary embedding F1}, Lemma~\ref{lem: multiplication}(i) and (ii), 
Proposition~\ref{extension-zero}, and \cite[Theorems 4.2 and  4.5]{MeSc12}    as follows
\goodbreak
\begin{align*}
 \|  \sB (z_*  )z  -   \sB(\bar{z}) z   \|_{\bF_\mu(J_T)}  
& \le C \| K(z_*)- K(\bar{z} ) \|_{\bF_\mu(J_T) } \|\nabla \theta \|_{\bF_\mu(J_T)}  \\
& \le C \int_0^1 \|   K'( \sigma z_* + (1-\sigma) \bar{z} ) \|_{\bF_\mu(J_T)}\,  d\sigma \|z_*-\bar{z}\|_{\bF_{1,\mu}(J_T)} \|z\|_{\bE_{1,\mu}(J_T)}\\
& \le  \Phi(\|z_*-\bar{z}\|_{\bE_{1,\mu}(J_T)} ) \|z\|_{\bE_{1,\mu}(J_T)}.
\end{align*}
The estimate of the second term  can be obtained analogously:
\begin{align*}
& \| [\sB'(z_*)z] z_* - [\sB'(\bar{z})z] \bar{z}  \|_{\bF_\mu(J_T)} \\
& \le \| [  K '(z_*) z] \nabla (\theta_* -\bar{\theta})  \|_{\bF_\mu(J_T)} + \| [ (K '(z_*) -   K '(\bar{z}) ) z ]\nabla  \bar{\theta}  \|_{\bF_\mu(J_T)} \\
& \le C \left( \|   K' (z_*)  z\|_{\bF_\mu(J_T)}   \|\nabla (\theta_* -\bar{\theta})   \|_{\bF_\mu(J_T)} +  
\sum_{j=1}^{16} \|  \left (  \partial_{j}  K (z_*) -    \partial_{j}  K (\bar{z})  \right)   \|_{\bF_\mu(J_T)} \| z_j \nabla  \bar{\theta}  \|_{\bF_\mu(J_T)}   \right)   \\
& \le C \left( \|   K' (z_*)\|_{\bF_\mu(J_T)} + \Phi(R) \| \nabla \bar{\theta}\|_{\bF_\mu(J_T)}  \right)\|  z\|_{\bF_{1,\mu}(J_T)}  \|z_*-\bar{z}\|_{\bE_{1,\mu}(J_T)}\\
& \le \Phi(\|z_*-\bar{z}\|_{\bE_{1,\mu}(J_T)} ) \|z\|_{\bE_{1,\mu}(J_T)}.
\end{align*}
\end{proof}

%%%%%%%%%%
To study the continuous dependence of solutions to \eqref{nonlinear abstract equation} on the initial data, 
see Theorem~\ref{Thm:wellposed abstract}(b), we need the following result. 

%%%%%%%%%%%
\begin{lemma}\label{lem:right inverse}
Let $\cB$ be as in \eqref{AB}. Then we have
\begin{enumerate}
\item[{\rm (a)}]
$\cB\in C^1(X_{\gamma, \mu}, Y_{\gamma,\mu})\quad\text{and} \quad \cB^\prime(z_0)z = \sB(z_0)z + [\sB^\prime (z_0)z]z_0,\quad z_0,z\in X_{\gamma,\mu}.$
\vspace{1mm}
\item[{\rm (b)}]
For each $z_0\in X_{\gamma,\mu}$, $\cB^\prime(z_0)\in \cL( X_{\gamma,\mu} , Y_{\gamma,\mu} )$ 
has a bounded right inverse
$ \cR(z_0)\in \cL( Y_{\gamma,\mu} , X_{\gamma,\mu}) $.
\end{enumerate}
\end{lemma}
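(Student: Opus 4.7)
\emph{For part (a).} The plan is to write $\cB$ as a composition of smooth maps. By definition $\cB(z) = \nu \cdot \mathrm{tr}_{\partial\Omega}(K(z)\nabla\theta)$ for $z=(u,F,\theta,m)\in X_{\gamma,\mu}$. The Nemytskii operator induced by $K\in C^5$ maps $X_{\gamma,\mu}$ into $W^{2\mu-2/p}_p(\Omega; \mathrm{sym}(\bM^3))$ in a $C^1$ fashion by Lemma~\ref{lem: Nemyskii}; taking the boundary trace yields a $C^1$ map into $W^{2\mu-3/p}_p(\partial\Omega; \mathrm{sym}(\bM^3))$. Since \eqref{indices cond} gives $2\mu-3/p > 2/p$, the latter space embeds into $C(\partial\Omega)$ and is a multiplier algebra on $Y_{\gamma,\mu} = W^{2\mu-1-3/p}_p(\partial\Omega)$. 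The map $\theta\mapsto \nabla\theta|_{\partial\Omega}$ is bounded linear into $Y_{\gamma,\mu}$, and $\nu$ is a fixed smooth vector field (since $\partial\Omega$ is $C^3$). Pointwise multiplication is bounded bilinear, hence smooth, and the product rule delivers $\cB'(z_0)z = \sB(z_0)z + [\sB'(z_0)z]z_0$.

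\emph{For part (b).} I will construct $\cR(z_0)$ explicitly by exploiting positivity of $K(z_0)$ and restricting to the subspace of $X_{\gamma,\mu}$ in which only the $\theta$-component is nonzero (and vanishes at $\partial\Omega$). Define
$$v(x):=\nu(x)\cdot K(z_0(x))\nu(x),\qquad x\in \partial\Omega.$$
By \eqref{assumption} we have $v\ge \underline{c}>0$ and, by the Nemytskii-trace argument used in part (a), $v\in W^{2\mu-3/p}_p(\partial\Omega)$. Applying the Nemytskii argument once more to the smooth function $t\mapsto 1/t$ on $[\underline{c},\infty)$ places $1/v$ in the same space, so multiplication by $1/v$ is a bounded linear endomorphism of $Y_{\gamma,\mu}$. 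By the standard trace theorem for Sobolev--Slobodeckij spaces (e.g.~\cite[Theorem~4.7.1]{Tri78}), the bitrace
$$ \theta\mapsto \bigl(\theta|_{\partial\Omega},\,\partial_\nu \theta|_{\partial\Omega}\bigr):\;W^{2\mu-2/p}_p(\Omega)\;\to\; W^{2\mu-3/p}_p(\partial\Omega)\times Y_{\gamma,\mu}$$
is surjective with bounded right inverse $\cE$; the regularity condition required, $2\mu-2/p>1+1/p$, is exactly guaranteed by \eqref{indices cond}. Set
$$\cR(z_0)g:= \bigl(0,\,0,\,\cE(0,\,g/v),\,0\bigr),\qquad g\in Y_{\gamma,\mu},$$
which lies in $X_{\gamma,\mu}$ because no boundary condition on the $\theta$-component is imposed in that space.

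\emph{Verification and main obstacle.} To verify $\cB'(z_0)\cR(z_0)=\mathrm{id}_{Y_{\gamma,\mu}}$, put $\theta:=\cE(0,g/v)$, so $\theta|_{\partial\Omega}=0$ and $\partial_\nu \theta|_{\partial\Omega}=g/v$. The lower-order contribution $[\sB'(z_0)(0,0,\theta,0)]z_0 = \nu\cdot\mathrm{tr}_{\partial\Omega}\!\bigl([\partial_\theta K(z_0)\theta]\nabla\theta_0\bigr)$ vanishes, as it is pointwise a multiple of $\theta|_{\partial\Omega}=0$. Since $\theta|_{\partial\Omega}=0$ forces all tangential derivatives of $\theta$ to vanish on $\partial\Omega$, we have $\nabla\theta|_{\partial\Omega}=(\partial_\nu \theta|_{\partial\Omega})\,\nu$, and hence
$$\sB(z_0)(0,0,\theta,0)=\nu\cdot K(z_0)\nu\;\partial_\nu\theta|_{\partial\Omega}=v\cdot(g/v)=g.$$
Boundedness of $\cR(z_0)$ follows by composing the three bounded maps $g\mapsto g/v$, $h\mapsto (0,h)$, and $\cE$, together with the obvious embedding into $X_{\gamma,\mu}$. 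The only delicate step is the multiplier claim for $1/v$ on $Y_{\gamma,\mu}$; this is where condition \eqref{indices cond} is used in an essential way, via the two thresholds $2\mu-3/p>2/p$ (for the multiplier algebra) and $2\mu-2/p>1+1/p$ (for the bitrace lifting) on the two-dimensional manifold $\partial\Omega$.
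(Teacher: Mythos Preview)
Your argument is correct, but both parts take a different route from the paper.

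For part~(a), the paper does not argue directly on the spatial trace spaces. Instead it lifts $z\in X_{\gamma,\mu}$ to a time-dependent function via the right inverse $\gamma_0^c\in\cL(X_{\gamma,\mu},\bE_{1,\mu}(J_T))$ of Lemma~\ref{lem:B-mu}(b), applies the time-dependent map $\cB:\bE_{1,\mu}(J_T)\to\bF_\mu(J_T)$ (already shown $C^1$ in Proposition~\ref{Prop: mapping properties}), and then evaluates at $t=0$ via the temporal trace $\tilde\gamma_0:\bF_\mu(J_T)\to Y_{\gamma,\mu}$. Your direct approach via Lemma~\ref{lem: Nemyskii} and the multiplier property of $W^{2\mu-3/p}_p(\partial\Omega)$ on $Y_{\gamma,\mu}$ is more elementary and self-contained, though it re-derives multiplier facts that the paper had effectively packaged into the time-dependent result. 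The paper's detour buys economy: once Proposition~\ref{Prop: mapping properties} is in place, (a) becomes a one-liner.

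For part~(b), the paper simply cites \cite[Proposition~2.5.1]{MeyriesThesis} without reproducing the construction. Your explicit right inverse via the bitrace extension $\cE$, together with the observation that choosing $\theta|_{\partial\Omega}=0$ both kills the lower-order term $[\sB'(z_0)(0,0,\theta,0)]z_0$ and forces $\nabla\theta|_{\partial\Omega}=(\partial_\nu\theta)\nu$, is essentially the argument behind that reference, and it is good that you identified exactly where \eqref{indices cond} enters (the thresholds $2\mu-3/p>2/p$ and $2\mu-2/p>1+1/p$). Your version is more transparent for a reader without access to~\cite{MeyriesThesis}.
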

%%%%%%%%%%%%%%
\begin{proof}
(a)
By  Lemma~\ref{lem:B-mu}, the trace operator $\gamma_0\in\cL(\bE_{1,\mu}(J_T),X_{\gamma,\mu} )$
has a right inverse 
$\gamma_0^c \in\cL(X_{\gamma,\mu}, \bE_{1,\mu}(J_T) )$.
It is then easy to see that $\cB(z)= \tilde{\gamma_0}\cB(\gamma^c_0 (z))$, 
where $\tilde{\gamma_0}$ denotes the initial time trace operator for functions defined on $\bF_\mu(J_T)$.
The assertions  then follow from Proposition~\ref{Prop: mapping properties}.

\medskip\noindent
(b)
The existence of $\cR(z_0)$ is proved in  \cite[Proposition 2.5.1]{MeyriesThesis}.
\end{proof}

}
%%%%%%%%%%%%%%%%%%%%%%%%%%%%%%%%%%%%%%%%
% \section*{Acknowledgements}
%
%%%%%%%%%%%%%%% 

%%%%%%%%%%%%
\end{document}